\definecolor{red}{rgb}{.8,0,0}
\definecolor{bblu}{rgb}{0,0,1}
\definecolor{purple}{rgb}{.8,0,1}
\newcommand{\HE}{Name of Handling Editor}
\newcommand{\DoS}{Month/Day/Year}
\newcommand{\DoA}{Month/Day/Year}
\newcommand{\CA}{Leonardo de Lima}
\newcommand{\Names}{Abiad, de Lima, Desai, Guo, Hogben, Madrid}
\newcommand{\Title}{Positive and Negative Square Energies of Graphs}
\newtheorem{remark}[theorem]{Remark}
\newtheorem{example}[theorem]{Example}
\newtheorem{question}[theorem]{Question}
 \newtheorem{conjecture}[theorem]{Conjecture}
 \numberwithin{figure}{section} 
 \numberwithin{table}{section} 
\newcommand{\mtx}[1]{\begin{bmatrix} #1 \end{bmatrix}}
\newcommand{\spec}{\operatorname{spec}}
\newcommand{\rank}{\operatorname{rank}}\newcommand{\tr}{\operatorname{tr}}\newcommand{\lam}{\lambda}
\newcommand{\proj}{\operatorname{proj}}
\newcommand{\Kn}{\operatorname{Kn}}
\newcommand{\bh}{{\bf h}}
\newcommand{\bv}{{\bf v}}
\newcommand{\bw}{{\bf w}}
\newcommand{\bz}{{\bf z}}
\newcommand{\bx}{{\bf x}}
\newcommand{\by}{{\bf y}}
\newcommand{\bzero}{{\bf 0}}
\newcommand\ones{{\mathbf 1}}
\newcommand{\R}{{\mathbb R}}
\newcommand\cE{{\mathcal E}}
\newcommand{\bit}{\begin{itemize}}
\newcommand{\eit}{\end{itemize}}
\newcommand{\ben}{\begin{enumerate}}
\newcommand{\een}{\end{enumerate}}
\newcommand{\beq}{\begin{equation}}
\newcommand{\eeq}{\end{equation}}
\newcommand{\bea}{\begin{eqnarray*}}
\newcommand{\eea}{\end{eqnarray*}}
\newcommand{\bpf}{\begin{proof}}
\newcommand{\epf}{\end{proof}}
\newcommand{\x}{\times}
\newcommand{\lp}{\!\left(}
\newcommand{\rp}{\right)}
\newcommand{\lb}{\left[}
\newcommand{\rb}{\right]}
\begin{document}

\bibliographystyle{plain}

\setcounter{page}{1}

\thispagestyle{empty}

 \title{\Title\thanks{Received
 by the editors on \DoS.
 Accepted for publication on \DoA. 
 Handling Editor: \HE. Corresponding Author: \CA}}

\author{Aida Abiad\thanks{Department of Mathematics and Computer Science, Eindhoven University of Technology, Eindhoven, The Netherlands (a.abiad.monge@tue.nl). Department of Mathematics: Analysis, Logic and Discrete Mathematics, Ghent University, Ghent, Belgium. Department of Mathematics and Data Science, Vrije Universiteit Brussel, Brussels, Belgium. Partially supported by the Research Foundation Flanders (FWO) grant 1285921N.} 
\and 
Leonardo de Lima\thanks{Departamento de Administração Geral e Aplicada,  Federal University of Parana, Curitiba, Brazil ({leonardo.delima@ufpr.br}).}
\and Dheer Noal Desai\thanks{Department of Mathematical Sciences, University of Delaware, DE, U.S.A. ({dheernsd@udel.edu}). Department of Mathematics and Statistics, University of Wyoming, WY, U.S.A. ({ddesai1@uwyo.edu}).}
\and Krystal Guo\thanks{Korteweg-de Vries Institute, University of Amsterdam, Amsterdam, The Netherlands ({k.guo@uva.nl}).}
\and Leslie Hogben\thanks{Department of Mathematics, Iowa State University,
Ames, IA, U.S.A. American Institute of Mathematics, San Jose, CA, U.S.A.
({hogben@aimath.org}).}
\and Jos\'e Madrid\thanks{Section de Mathematiques, Universite de Geneve, Geneva, Switzerland ({Jose.Madrid@unige.ch}).}
}

\markboth{\Names}{\Title}

\maketitle

\begin{abstract}
The energy of a graph $G$ is the sum of the absolute values of the eigenvalues of the adjacency matrix of $G$. Let $s^+(G), s^-(G)$ denote the sum of the squares of the positive and negative eigenvalues of $G$, respectively. It was conjectured by [Elphick, Farber, Goldberg, Wocjan,  \emph{Discrete Math.} (2016)] that if $G$ is a connected graph of order $n$, then $s^+(G)\geq n-1$ and $s^-(G) \geq n-1$. In this paper, we show partial results towards this conjecture. In particular,  numerous structural results that may help in proving the conjecture are  derived, including the effect of various graph operations. These are then used to establish the conjecture for several graph classes, including graphs with certain fraction of positive eigenvalues and unicyclic graphs.
\end{abstract}

\begin{keywords}
Graph eigenvalues, Adjacency matrix,  Inertia of a graph, Energy
\end{keywords}
\begin{AMS}
05C50, 15A18, 15A42
\end{AMS}

\section{Introduction and preliminaries}

Suppose $G=( V(G),E(G))$ is a graph on $n=|V(G)|$ vertices with adjacency matrix  $A(G)$. Let $\pi_G$ be the number of positive eigenvalues and $\nu_G$ be the number of negative eigenvalues  and number the eigenvalues of  $A(G)$ in decreasing order, so $\lambda_1 \geq \cdots \geq \lambda_{\pi_G}>0$ are the positive eigenvalues  and $0 > \lambda_{n-\nu_G+1}\geq \cdots \geq \lambda_n$ are the negative eigenvalues. The \textsl{energy} $\cE(G)$ of $G$ is defined by
$\cE(G) = \sum_{i=1}^{\pi_G} \lambda_i -\sum_{j=n-\nu_G+1}^n \lambda_j$. Since its introduction  in the study of molecular chemistry more than sixty years ago, the energy of a graph has attracted a considerable amount of attention; for an overview see \cite{Gutman2001}.

In this paper, we investigate the positive and negative square energies of a graph, introduced by Wocjan and Elphick in \cite{WE13} to provide bounds on the chromatic number. The \textsl{square positive energy} of $G$ is defined to be $s^+(G) = \sum_{i=1}^{\pi_G} \lambda_i^2$. Similarly, the  \textsl{square negative energy} of $G$ is defined by $s^-(G) = \sum_{j=n-\nu_G+1}^n \lambda_j^2$.
The following conjecture is due to Elphick, Farber, Goldberg and Wocjan. 

\begin{conjecture}\label{conj:energies}{\rm \cite{EFGW16}} If $G$ is a connected graph of order $n$, then $s^+(G)\geq n-1$ and $s^-(G) \geq n-1$.
\end{conjecture}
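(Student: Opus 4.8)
The plan is to isolate the genuinely hard quantity and attack the two inequalities through complementary regimes. The starting point is the pair of facts $s^+(G)+s^-(G)=\tr A(G)^2=2m$ and, for bipartite $G$, the spectral symmetry $\lambda\leftrightarrow-\lambda$, which forces $s^+(G)=s^-(G)=m$. Since a connected graph has $m\geq n-1$, the conjecture holds for every connected bipartite graph, and in particular for all trees (where $m=n-1$ makes it tight with $s^+=s^-=n-1$). Thus the whole content lies in non-bipartite graphs, where $2m$ is a fixed total that must be split between $s^+$ and $s^-$, and the task reduces to showing that neither side drops below $n-1$.

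First I would dispatch the graphs controlled by the inertia. Writing $\sigma=\cE(G)/2=\sum_{\lambda_i>0}\lambda_i=\sum_{\lambda_j<0}|\lambda_j|$, Cauchy--Schwarz over the $\pi_G$ positive and the $\nu_G$ negative eigenvalues gives $s^+(G)\geq\sigma^2/\pi_G$ and $s^-(G)\geq\sigma^2/\nu_G$. Combined with a lower bound on the energy, these settle every graph in which the relevant fraction of eigenvalues is small; this is exactly the ``fraction of positive eigenvalues'' regime, and the bound is sharp for $K_n$, where the $n-1$ equal eigenvalues $-1$ make $s^-=\sigma^2/\nu_G=n-1$. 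A complementary elementary bound, $s^+(G)\geq\lambda_1^2\geq\Delta(G)$, settles the $s^+$ inequality whenever some degree is large.

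For the remaining sparse non-bipartite graphs I would build an inductive machine from graph operations, the aim being to peel a graph down to a base case with explicit spectrum. The key lemmas quantify how $s^+$ and $s^-$ move under adding a pendant vertex, subdividing or deleting an edge, and taking disjoint unions or joins; the cleanest is that appending a pendant vertex raises $s^-(G)$ by at least $1$, which one can hope to prove by a rank-perturbation/interlacing argument together with the zero-diagonal constraint $(A^2)_{ii}=\deg(i)$. Iterating pendant deletions reduces any unicyclic graph to a single cycle $C_g$, whose eigenvalues $2\cos(2\pi k/g)$ yield $s^+$ and $s^-$ by a direct finite computation (both equal $g$ when $g$ is even, and a short estimate covers odd $g$, the tight case being $C_3=K_3$). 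Tracking accumulated slack lets the weaker movement of $s^+$ under the same operation still land above $n-1$.

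The main obstacle is that the two inequalities resist a single unified argument, and every method must be loss-free because the bound is attained on very different families: trees and complete graphs for $s^-$, and trees for $s^+$. The root cause is that only the sum $s^+(G)+s^-(G)=2m$ has a closed form; nothing elementary constrains how $2m$ is partitioned, and for non-bipartite graphs the split can be extremely lopsided (as in $K_n$, where $s^+=(n-1)^2$ while $s^-=n-1$), so the binding inequality is invariably the smaller side, typically $s^-$. The Cauchy--Schwarz bound $\sigma^2/\nu_G$ is sharp only when the negative eigenvalues are concentrated and becomes weak once they spread out, while the operation lemmas are tailored to sparse graphs; the territory left uncovered---dense, non-bipartite graphs with many, non-concentrated negative eigenvalues---is exactly where I expect the real difficulty to lie, and it is why one should expect only structured classes (bipartite graphs, unicyclic graphs, and graphs with a bounded fraction of positive eigenvalues) to fall to these techniques.
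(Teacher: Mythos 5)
The statement you set out to prove is Conjecture \ref{conj:energies}, which is \emph{open}: the paper does not prove it, and neither do you --- your closing paragraph concedes that dense non-bipartite graphs with spread-out negative spectra are untouched, so what you have is a program for partial results, not a proof. The sound parts of your program reproduce what the paper actually establishes: bipartite graphs via $s^+=s^-=|E(G)|\geq n-1$; the Cauchy--Schwarz bound $s^+(G)\geq \cE(G)^2/(4\pi_G)$ and $s^-(G)\geq \cE(G)^2/(4\nu_G)$, which is exactly the paper's Lemma \ref{l:energy} and leads to Theorem \ref{thm:atmostn/4posev} on graphs with a small fraction of positive (or negative) eigenvalues; and $s^+(G)\geq\lambda_1^2$, which is Remark \ref{r:rho+uv}. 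These cover the same structured classes the paper covers and leave the same territory open.

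The genuine gap is in your inductive machine. Its engine --- the ``cleanest lemma'' that appending a pendant vertex raises $s^-(G)$ by at least $1$ --- is false, and the paper's own data refute it. The discussion around Figure \ref{fig:unicyclic-pendant} states explicitly that one might hope the increment to $s^+$ and $s^-$ under pendant addition is at least one, ``but that is not the case.'' More concretely, Table \ref{tab:unicyclic} shows that the minimizers $H_n^3$ of $s^-$ among non-bipartite unicyclic graphs form a chain under pendant addition (attach a leaf to the high-degree vertex of $H_n^3$ to get $H_{n+1}^3$), and the successive increments of $s^-$ along this chain are $5.073208-4.096788\approx 0.976$, $6.060343-5.073208\approx 0.987$, and so on --- all strictly below $1$. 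Since $s^+(G)+s^-(G)=2|E(G)|$ increases by exactly $2$ per pendant, no interlacing or rank-perturbation argument can rescue a per-step gain of $1$ on each side separately; the split between the two sides is precisely what nothing elementary controls, as you yourself observe. Consequently your peeling induction from a unicyclic graph down to $C_g$ collapses, and with it your claim to handle all unicyclic graphs --- which would have been \emph{stronger} than the paper, since the paper proves the conjecture only for odd cycles (Corollary \ref{cor:unicyclic-chi-f}), unicyclic graphs with a sufficiently long odd cycle (Theorem \ref{lem:unicylic-big-m}), $U_{n,3}$, and $s^+$ for $H_n^3$, while recording that the case of a unicyclic graph of order $n\geq 10$ with a triangle, other than $U_{n,3}$, remains open. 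That your induction would have settled this open case at a stroke is exactly the sign that its key lemma had to fail.
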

We should note that in \cite{EFGW16}, the above conjecture is stated as a minimum, but we will consider it as two separate conjectures, namely that $s^+(G)\geq n-1$ and $s^-(G) \geq n-1$, which may be proved separately.
Conjecture \ref{conj:energies} has been established for several graph classes in \cite{EFGW16}{.} Using a bound on the chromatic number of a graph using $s^+$ and $s^-$ established by Ando and Lin in \cite{AndLin2015}, the conjecture was shown to be true for regular graphs (excluding odd cycles) in \cite{EFGW16}. Other classes for which the conjecture is known to be true include bipartite graphs, complete $q$-partite graphs, 
barbell graphs, hyper-energetic graphs, and graphs with exactly two negative eigenvalues  and minimum degree two \cite{EFGW16}. Moreover, a computational search among small graphs was done in \cite{EFGW16} and no counterexample was found.

 In this paper we present several results that support Conjecture \ref{conj:energies}. In Section \ref{sec:structural},  we provide some structural results, including establishing bounds on the effect of several graph operations and graph products.  We use a variety of  linear algebraic techniques to establish these results, including Perron-Frobenius theory, Rayleigh quotients, eigenvalue interlacing for edge deletion,  and  quotient matrices. 
 
 In Section \ref{sec:classesresults}, we apply the tools developed in Section \ref{sec:structural} to prove that the conjecture holds for various families of graphs, including odd cycles  and some other unicyclic graphs, some  cactus graphs, and extended barbell graphs.  We also find additional bounds. 
 Section \ref{s:conclude} contains concluding remarks and directions for future research.
 Preliminary results and additional background are stated in the remainder of this introduction.

We begin with  some graph notation and terminology that will be used throughout. 
 For a graph $G$,  $V(G)$ denotes the set of vertices  and  $E(G)$ denotes the set of edges.  Recall that a graph $H$ is a \emph{subgraph} of a graph $G$ if $V(H)\subseteq V(G)$ and $E(H)\subseteq E(G)$ and $H$ is an 
 \emph{induced subgraph} of a graph $G$ if $V(H)\subseteq V(G)$ and $E(H)=\{uv\in  E(G): u,v\in V(H)\}$.  Two vertices $u$ and $v$ of $G$ are {\em neighbors} (or $u$ and $v$ are \emph{adjacent}) if $uv\in E(G)$.  The (open) \emph{neighborhood} of a vertex $v$, denoted by $N_G(v)$ or $N(v)$ is the set of neighbors of $v$, and the \emph{closed neighborhood} is $N_G[v]=N_G(v)\cup\{v\}$.   The \emph{degree} of $v\in V(G)$ is $\deg(v)=|N(v)|$. 
 
  Let $\bar d(G)$ denote the average degree of $G$. Note $\bar d(G)=\frac{2}n |E(G)|$ where $n$ is the order of $G$.

 As noted in \cite{EFGW16}, there is a slightly stronger version of Conjecture \ref{conj:energies}: $s^+(G), s^-(G) \geq n-k$ for a graph $G$ of order $n$ with $k$ connected components. However, in this paper we will focus on connected graphs, because it is shown in \cite{EFGW16} that Conjecture \ref{conj:energies} implies $s^+(G), s^-(G) \geq n-k$ when $G$ has $k$ connected components. 
Suppose $G$ is a connected graph of order $n$. Since the sum of the squares of all of the eigenvalues is equal to $2|E(G)|$  and $|E(G)|\ge n-1$, it immediately follows that
\[
 \max(s^+(G), s^-(G)) \geq n-1. 
 \]
If $G$ is bipartite, then $s^+(G) = s^-(G) = |E(G)| \geq n-1$, with equality if and only if $G$ is a tree.

We observe that the proof  of the conjecture for regular graphs (excluding odd cycles) in \cite{EFGW16}
 gives $s^+(G),s^-(G)\ge \frac{2|E(G)|}{\chi(G)}$; loosely speaking, this  is useful for a
 graph with many edges and low chromatic number.  
 By the four color theorem, the conjecture is true for planar graphs with at least $2n-2$ edges, which includes all maximal planar graphs when $n\ge 4$.  

 Let $\rho(A)$ denote the spectral radius of the matrix $A$ and $\rho(G)=\rho(A(G))$. Note that $\rho(G)=\lambda_1(G)$ by Perron-Frobenius theory since $A(G)$ is nonnegative. 
\begin{remark}{\rm
 Using Rayleigh quotients, it is easy to see that $\rho(G)=\lambda_1(G)\ge \bar d(G)$, so $s^+(G)\ge \bar d(G)^2$. Thus if $\bar d(G)^2\ge n-1$, then $s^+(G)\ge n-1$.   }\end{remark}

\begin{remark}{\rm \label{r:rho+uv} 
It is known  that $\rho(G+uv)\ge \rho(G)$, which follows from Rayleigh quotients for non-negative matrices. So if $G$ has a spanning subgraph $H$ for which $\rho(H)^2\ge n-1$, then $s^+(G)\ge \rho(G)^2\ge \rho(H)^2\ge n-1$.  In particular,
\begin{enumerate}
\item If $G$ has a dominating vertex (a vertex adjacent to every other vertex), then $s^+(G)\ge n-1$, because $\rho(K_{1,n-1})=\sqrt{n-1}$.
\item\label{compbip} If $G$ has a $K_{ r,n-r}$ subgraph for $1\le r \le {n-1}$, then $s^+(G)\ge n-1$, because every bipartite graph satisfies Conjecture \ref{conj:energies} (see \cite{EFGW16}) and a complete bipartite graph has exactly one positive eigenvalue (the spectral radius $\rho(K_{r,n-r})$) and exactly one negative eigenvalue ($-\rho(K_{r,n-r})$).  
\item If $G$ has a clique {$K_{r+1}$} with $r\ge \sqrt{n-1}$, then $s^+(G)\ge n-1$, because $\rho(K_{r+1})=r\ge \sqrt{n-1}$.
\end{enumerate}}
\end{remark}

\section{Structural techniques and tools}\label{sec:structural}

 In this section we establish bounds on the changes in $s^+(G)$ and $s^-(G)$ caused by  graph operations and products and apply quotient matrices to the study of $s^+(G)$ and $s^-(G)$.  In particular, in Section  \ref{s:Kron}, we show that if $G$ and $H$ satisfy Conjecture \ref{conj:energies}, then so does $G\otimes H$.  In Section \ref{s:edge}, we bound the changes in $s^+(G)$ and $s^-(G)$ caused by removing an edge. In  Section \ref{subsec:movingneighbors}, we bound the changes in $s^+(G)$ and $s^-(G)$ caused by moving neighbors from one vertex to another. 
 {Sections \ref{sec:interlacing} and \ref{sec:haemersinterlacing} apply interlacing to induced subgraphs,
and to vertex partitions via
 quotient matrices. Section \ref{sec:eqot-twins} uses quotients of equitable partitions to determine spectra of  graphs with twins.}

We begin with a simple result for joins and its application to threshold graphs.
For  disjoint graphs $G$ and $H$,  the \emph{join} of $G$ and $H$, denoted by $G\vee H$, is the graph
with vertex set $V(G) \cup V(H)$ and edge set $E(G) \cup E(H) \cup \{ gh : g \in V(G), h\in V(H)\}$.
A \emph{threshold graph} is a graph that can be constructed from a one-vertex graph by repeated applications of the following two operations:
1) addition of a single isolated vertex to the graph; 2) addition of a single dominating vertex to the graph, i.e., a single vertex that is connected to all other vertices.

\begin{lemma}
\label{lema:join}
Let $G,H$ be two non-empty  disjoint graphs with $|V(G)|+|V(H)|=n$. Then,\vspace{-6pt} \[s^+(G\vee H)\ge n-1.\vspace{-6pt} \]
\end{lemma}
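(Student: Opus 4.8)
The plan is to observe that the join $G\vee H$ automatically contains a large complete bipartite graph as a spanning subgraph, and then to invoke the spectral-radius monotonicity already recorded in Remark~\ref{r:rho+uv}. Concretely, set $r=|V(G)|$, so that $n-r=|V(H)|$. By definition of the join, every vertex of $G$ is adjacent to every vertex of $H$, so the edge set of $G\vee H$ contains all $r(n-r)$ edges between the two parts. Hence $G\vee H$ has the complete bipartite graph $K_{r,n-r}$ as a spanning subgraph (indeed as a subgraph on the same vertex set, ignoring the internal edges of $G$ and of $H$).

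The next step is to check that $r$ lies in the admissible range. Since $G$ and $H$ are both non-empty, we have $r\ge 1$ and $n-r\ge 1$, i.e.\ $1\le r\le n-1$. This is precisely the hypothesis needed to apply Remark~\ref{r:rho+uv}\eqref{compbip}, which gives $s^+(G\vee H)\ge n-1$ directly. If one prefers to argue from the spectral radius rather than quoting the complete-bipartite case, I would instead use the monotonicity $\rho(G\vee H)\ge \rho(K_{r,n-r})$ from Remark~\ref{r:rho+uv} together with the known value $\rho(K_{r,n-r})=\sqrt{r(n-r)}$, and conclude
\[
s^+(G\vee H)\ \ge\ \rho(G\vee H)^2\ \ge\ \rho(K_{r,n-r})^2\ =\ r(n-r).
\]

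The only genuinely computational point is the elementary inequality $r(n-r)\ge n-1$ for integers $r$ with $1\le r\le n-1$. This follows because the function $r\mapsto r(n-r)$ is concave, so over the integer interval $[1,n-1]$ it attains its minimum at an endpoint, where it equals $1\cdot(n-1)=n-1$. Combining this with the displayed chain yields $s^+(G\vee H)\ge n-1$, as desired.

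I do not expect any serious obstacle here: the statement is essentially an immediate corollary of the spanning-subgraph observation plus Remark~\ref{r:rho+uv}, and the non-emptiness hypotheses on $G$ and $H$ are exactly what is needed to keep $r$ strictly between $0$ and $n$. The mild subtlety worth flagging is that the bound uses only the \emph{bipartite} edges of the join and is insensitive to the internal structure of $G$ and $H$; this is what makes the argument so short, and it also explains why the same reasoning later specializes cleanly to threshold graphs, which are built by repeated joins and disjoint unions.
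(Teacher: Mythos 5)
Your proof is correct and takes essentially the same route as the paper: both identify the complete bipartite graph $K_{r,n-r}$ (with $r=|V(G)|$, so $1\le r\le n-1$ by non-emptiness) as a spanning subgraph of $G\vee H$ and then invoke Remark~\ref{r:rho+uv}(\ref{compbip}). Your extra computation $\rho(K_{r,n-r})^2=r(n-r)\ge n-1$ is simply an unpacking of the justification already contained in that remark, not a different argument.
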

\begin{proof}
Note that $G\vee H$ contains $K_{s,n-s}$ as a subgraph for some $1\le s\le n-1$,  so  $s^+(G\vee H)\ge  n-1$ by Remark \ref{r:rho+uv}. 
\end{proof}  

The method in Lemma \ref{lema:join} does not resolve the conjecture for $s^-(G\vee H)$.  Note that there is a tight example from taking $G=K_1$ and $H=K_{n-1}$ shows that we can have $s^-(G\vee H)=n-1$.

The following result is immediate from Lemma \ref{lema:join} and the definition of threshold graph.    

\begin{corollary}
If $G$ is a  connected threshold graph {of order $n$}, then $s^+(G)\ge n-1$.
\end{corollary}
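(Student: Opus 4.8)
The plan is to exploit the recursive structure of threshold graphs to exhibit $G$ as a join, and then invoke Lemma~\ref{lema:join} directly. First I would dispose of the trivial base case $n=1$, where $G=K_1$ and $s^+(G)=0=n-1$, so the inequality holds (with equality). For the remainder I assume $n\ge 2$.

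The key structural observation is that every \emph{connected} threshold graph on $n\ge 2$ vertices has a dominating vertex. To see this, consider the sequence of operations that builds $G$, and look at the final operation. If that operation adds an isolated vertex $v$, then $v$ has degree zero in $G$ (no later operation can change its neighborhood), which contradicts connectivity for $n\ge 2$. Hence the final operation must add a dominating vertex $v$; since no operations follow, $v$ remains adjacent to all other vertices, so $v$ is a dominating vertex of $G$. Writing $H=G-v$ for the threshold graph produced just before $v$ is added, the fact that $v$ is adjacent to every vertex of $H$ gives the join decomposition $G = H \vee K_1$, where $K_1=\{v\}$.

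It now suffices to apply Lemma~\ref{lema:join}. Both factors are non-empty: $H$ has $n-1\ge 1$ vertices and $K_1$ has one vertex, and $|V(H)|+|V(K_1)| = (n-1)+1 = n$. Therefore Lemma~\ref{lema:join} yields $s^+(G)=s^+(H\vee K_1)\ge n-1$, as desired.

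I do not expect a genuine obstacle here, since the result is essentially a corollary of Lemma~\ref{lema:join}. The only point requiring care is the structural claim that a connected threshold graph admits a dominating vertex (equivalently, that its construction sequence must terminate with a dominating-vertex addition); once this is justified via the connectivity argument above, the join decomposition and the application of the lemma are immediate.
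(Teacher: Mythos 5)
Your proof is correct and follows exactly the route the paper intends: the paper states the corollary is immediate from Lemma~\ref{lema:join} and the definition of threshold graph, which is precisely your argument that the final construction step of a connected threshold graph must add a dominating vertex, yielding $G = H \vee K_1$ and hence $s^+(G) \ge n-1$. Your explicit treatment of the base case $n=1$ and the justification that the last operation cannot be an isolated-vertex addition are the details the paper leaves implicit.
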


\subsection{Kronecker product}\label{s:Kron}
The \textsl{Kronecker product} (also called the \textsl{tensor product}) of two graphs $G$ and $H$, denoted $G\otimes H$, is the graph with vertex set $V(G) \times V(H)$
where $(g,h)$ and $(g',h')$ are adjacent whenever $g \sim g'$ in $G$ and $h \sim h'$ in $H$. The adjacency matrix of $G\otimes H$ is $A(G) \otimes A(H)$, where $\otimes$ denotes the Kronecker product of matrices.

\begin{proposition}\label{lem:productsgeneral}
Suppose $G$ and $H$ are 
graphs of orders $n$ and $m$, respectively, that satisfy $s^+(G)$, $s^-(G)\ge n-1$, $s^+(H),s^-(H)\ge m-1$, and $m,n\ge 3$.  Then  $s^+(G\otimes H),s^-(G\otimes H)\ge mn-1$.
\end{proposition}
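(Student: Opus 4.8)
The plan is to exploit the fact, recalled just above the statement, that $A(G\otimes H)=A(G)\otimes A(H)$, so that the spectrum of $G\otimes H$ is exactly the multiset of pairwise products $\{\lambda_i\mu_j\}$, where $\lambda_1,\dots,\lambda_n$ are the eigenvalues of $G$ and $\mu_1,\dots,\mu_m$ are the eigenvalues of $H$. The two observations that drive everything are that the \emph{sign} of a product $\lambda_i\mu_j$ is the product of the signs (and any product involving a zero eigenvalue contributes nothing to either square energy), and that the \emph{square} of a product is the product of squares, $(\lambda_i\mu_j)^2=\lambda_i^2\mu_j^2$.

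First I would resolve the positive part of the spectrum of $G\otimes H$ by sign. A product $\lambda_i\mu_j$ is positive precisely when both factors are positive or both are negative, so grouping the contributions gives
\[
s^+(G\otimes H)=\Bigl(\sum_{\lambda_i>0}\lambda_i^2\Bigr)\Bigl(\sum_{\mu_j>0}\mu_j^2\Bigr)+\Bigl(\sum_{\lambda_i<0}\lambda_i^2\Bigr)\Bigl(\sum_{\mu_j<0}\mu_j^2\Bigr)=s^+(G)\,s^+(H)+s^-(G)\,s^-(H).
\]
An identical computation for the negative products (one factor positive, the other negative) yields
\[
s^-(G\otimes H)=s^+(G)\,s^-(H)+s^-(G)\,s^+(H).
\]

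Now I would apply the hypotheses $s^+(G),s^-(G)\ge n-1$ and $s^+(H),s^-(H)\ge m-1$ to each of the two products appearing in either identity, which bounds every product below by $(n-1)(m-1)$ and hence gives
\[
s^+(G\otimes H),\ s^-(G\otimes H)\ \ge\ 2(n-1)(m-1).
\]
It then remains to check the elementary inequality $2(n-1)(m-1)\ge mn-1$. Expanding, this is equivalent to $mn-2m-2n+3\ge 0$, i.e.\ to $(m-2)(n-2)\ge 1$, which holds precisely because $m,n\ge 3$; note that this is exactly where the hypothesis $m,n\ge 3$ enters, and the case $m=n=3$ shows the bound is sharp.

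I expect the only step requiring genuine care to be the sign bookkeeping in the first paragraph, namely correctly pairing positive-with-positive and negative-with-negative factors to assemble $s^+(G\otimes H)$, rather than naively writing $s^+(G\otimes H)=s^+(G)s^+(H)$. Once the two product identities are in hand, the conclusion is a one-line algebraic inequality, with $m,n\ge 3$ being exactly the condition that upgrades the crude bound $2(n-1)(m-1)$ to $mn-1$.
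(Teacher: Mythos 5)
Your proof is correct and follows essentially the same route as the paper: both establish the identity $s^+(G\otimes H)=s^+(G)s^+(H)+s^-(G)s^-(H)$ (and its analogue for $s^-$) by pairing eigenvalue signs in the Kronecker product spectrum, bound below by $2(n-1)(m-1)$, and reduce the final inequality to $(m-2)(n-2)\ge 1$. The only difference is cosmetic: you write out the $s^-$ identity explicitly, where the paper dismisses it as ``a similar computation.''
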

\begin{proof}
  Let $\lambda_i,\mu_j$ denote the eigenvalues of $G,H$ with $\lambda_{\pi_G}> 0,\lambda_{\pi_G+1}\le 0$ and $\mu_{\pi_H}> 0,\ \mu_{\pi_H+1}\le 0$.
The eigenvalues of $G\otimes H$ are $\lambda_i \mu_j$.  Thus

\bea
s^+(G\otimes H) &=& \sum_{ i\le \pi_G,j\le \pi_H} \lambda_i^2\mu_j^2+\sum_{ i>\pi_G,j> \pi_H} \lambda_i^2\mu_j^2\\
 &=&  s^+(G)s^+(H)+s^-(G)s^-(H) \\
&\ge& 2(n-1)(m-1)\\
&=&2nm-2m-2n+2\ge nm-1,
\eea
where in the last step we used that $nm+1\ge2(m+n-1)$, which is equivalent to $(m-2)(n-2)\geq 1$ and which is valid for $m,n\ge 3$.
A similar computation shows that the same holds for $s^-(G\otimes H)$.
\end{proof}

Note that  the conclusion of Proposition  \ref{lem:productsgeneral} can be false if, for example, $G=K_2$ and  $H$ is a tree (so $H$ satisfies the conjecture with equality) because $G\otimes H$ is two copies of $H$.  However,  this is not a counterexample to the more general conjecture  (since $G\otimes H$ has 2 components and does satisfy $s^+(G\otimes H),s^-(G\otimes H)=  mn-2$) nor to the proposition (since we require $m, n \ge 3$).

We note that this naive method did not work for the Cartesian, categorical or strong products of graphs.

\subsection{Removing an edge}\label{s:edge}
In this section we use a result on edge interlacing for the adjacency matrix established by Hall, Patel, and Stewart in  \cite{edgeinterlacing} to investigate what we can say regarding the conjecture when we consider the subgraph obtained on deleting an edge in the original graph.

\begin{theorem}{\rm \cite{edgeinterlacing}}\label{thm:edgeinterlacing}
Let $G$ be a graph and let $H = G - e$, where $e$ is an edge of $G$. If $\lambda_1 \geq \cdots \geq \lambda_n $ and $\theta_1\geq \cdots \geq \theta_{n}$ denote the eigenvalues of $A(G)$ and $A(H)$, respectively, then \vspace{-5pt}
\[\lambda_{i-1} \geq \theta_i \geq \lambda_{i+1} \qquad \text{for } i=2,3,\ldots, n-1, \vspace{-5pt}\]
and $\theta_1\geq \lambda_2$ and $\theta_{n}\leq \lambda_{n-1}$.

\end{theorem}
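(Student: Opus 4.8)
The plan is to realize edge deletion as a symmetric rank-two perturbation and then split it into two rank-one steps, each governed by the classical rank-one interlacing inequalities. Writing $e=\{u,v\}$, I would start from the identity
\[
A(G)=A(H)+\mathbf{e}_u\mathbf{e}_v^\top+\mathbf{e}_v\mathbf{e}_u^\top,
\]
where $\mathbf{e}_u,\mathbf{e}_v$ are the standard basis vectors; the perturbation is exactly the $0/1$ matrix supported on the two symmetric entries corresponding to the reinstated edge. The key algebraic observation is that this rank-two perturbation decomposes as a difference of two rank-one positive semidefinite matrices: with $\bw=\mathbf{e}_u+\mathbf{e}_v$ and $\bz=\mathbf{e}_u-\mathbf{e}_v$,
\[
\mathbf{e}_u\mathbf{e}_v^\top+\mathbf{e}_v\mathbf{e}_u^\top=\tfrac12\bw\bw^\top-\tfrac12\bz\bz^\top .
\]

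Next I would introduce the intermediate matrix $M=A(H)+\tfrac12\bw\bw^\top$, with eigenvalues $\phi_1\ge\cdots\ge\phi_n$, so that the passage $A(H)\to M$ adds a positive semidefinite rank-one matrix while $M\to A(G)$ subtracts one. Applying the standard rank-one interlacing theorem to each step (adding a rank-one positive semidefinite matrix raises eigenvalues and interlaces them, while subtracting one lowers them and interlaces them) yields the two chains
\[
\phi_1\ge\theta_1\ge\phi_2\ge\theta_2\ge\cdots\ge\phi_n\ge\theta_n
\]
and
\[
\phi_1\ge\lambda_1\ge\phi_2\ge\lambda_2\ge\cdots\ge\phi_n\ge\lambda_n .
\]

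Finally I would eliminate the auxiliary spectrum $\{\phi_i\}$ by chaining these inequalities. The first chain gives $\theta_i\le\phi_i$ and $\theta_i\ge\phi_{i+1}$, while the second gives $\phi_i\le\lambda_{i-1}$ and $\phi_{i+1}\ge\lambda_{i+1}$. Combining these produces $\lambda_{i-1}\ge\theta_i\ge\lambda_{i+1}$ for the interior indices $i=2,\dots,n-1$, and the same bookkeeping at the ends yields $\theta_1\ge\phi_2\ge\lambda_2$ and $\theta_n\le\phi_n\le\lambda_{n-1}$, which are precisely the two boundary statements. I do not expect a genuine obstacle here: the only real content is the rank-one splitting of the perturbation, and the remaining difficulty is purely the index bookkeeping needed to pass from the interleaved $\theta$–$\phi$ and $\lambda$–$\phi$ chains to the clean $\theta$-versus-$\lambda$ interlacing, together with checking that the endpoint indices $i=1$ and $i=n$ behave as claimed.
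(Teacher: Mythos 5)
Your proof is correct. Note first that the paper itself gives no argument for this statement: it is quoted verbatim from the reference of Hall, Patel, and Stewart, so there is no in-paper proof to compare against; what you have produced is a self-contained derivation. Your two key steps both check out: the identity $\mathbf{e}_u\mathbf{e}_v^\top+\mathbf{e}_v\mathbf{e}_u^\top=\tfrac12\bw\bw^\top-\tfrac12\bz\bz^\top$ with $\bw=\mathbf{e}_u+\mathbf{e}_v$, $\bz=\mathbf{e}_u-\mathbf{e}_v$ is immediate to verify, and the rank-one interlacing you invoke (for a symmetric $B$ and $\rho>0$, the spectra of $B$ and $B+\rho\bv\bv^\top$ interlace with the perturbed matrix on top, e.g.\ by Weyl's inequalities) is standard. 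The chaining is also right: from $\lambda_{i-1}\ge\phi_i\ge\theta_i$ and $\theta_i\ge\phi_{i+1}\ge\lambda_{i+1}$ you get the interior inequalities for $i=2,\dots,n-1$, and the endpoint cases $\theta_1\ge\phi_2\ge\lambda_2$ and $\theta_n\le\phi_n\le\lambda_{n-1}$ fall out of the same chains. This is essentially the argument one finds in the cited literature for edge interlacing, so you have not diverged from the expected route so much as supplied the proof the paper delegates to a citation; the benefit is that the result (and hence the paper's Theorem on $s^+(G)\ge s^+(H)-\theta_2^2$, $s^-(G)\ge s^-(H)-\theta_n^2$, which rests on it) becomes self-contained, using nothing beyond Weyl's inequalities.
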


We use Theorem \ref{thm:edgeinterlacing}  to obtain the next result, which gives lower bounds for $s^+(G)$ and $s^-(G)$ in terms of $s^+(G-e)$ and $s^-(G-e)$. The slightly  stronger bound for $s^+(G)$ is obtained by further combining Theorem \ref{thm:edgeinterlacing} with the known fact that the spectral radius of $G$ is at least as much as the spectral radius of $G - e$. 
 Recall that it is known that Conjecture \ref{conj:energies} is true for a graph that has exactly one positive eigenvalue or  exactly one negative eigenvalue 
 \cite{EFGW16}, so the restriction to having at least two positive and two negative eigenvalues does not reduce the usefulness of the result.

\begin{theorem}
 Let $G$ be a graph,  let $H = G - e$ (where $e$ is an edge of $G$), and let $\lambda_1 \geq \cdots \geq \lambda_n $ and $\theta_1\geq \cdots \geq \theta_{n}$ denote the eigenvalues of $A(G)$ and $A(H)$, respectively. If $H$ has at least two positive eigenvalues  and at least two negative eigenvalues, then $s^+(G)\geq s^+(H) - \theta_{2}^2$ and $s^-(G)\geq s^-(H) - \theta_{n}^2$.
\end{theorem}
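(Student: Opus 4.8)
The plan is to transfer the squared eigenvalues of $H$ onto squared eigenvalues of $G$ via the single index shift supplied by the edge interlacing of Theorem~\ref{thm:edgeinterlacing}, keeping careful track of signs so that the transferred terms land inside $s^+(G)$ or $s^-(G)$ as appropriate. Write $\pi_H\ge 2$ and $\nu_H\ge 2$ for the numbers of positive and negative eigenvalues of $H$, so that $\theta_1\ge\cdots\ge\theta_{\pi_H}>0$ and $0>\theta_{n-\nu_H+1}\ge\cdots\ge\theta_n$.

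For the bound on $s^+(G)$, I would first handle the top eigenvalue separately: since $H=G-e$ is a spanning subgraph of $G$, the spectral radius monotonicity recalled in Remark~\ref{r:rho+uv} gives $\lambda_1=\rho(G)\ge\rho(H)=\theta_1>0$, hence $\lambda_1^2\ge\theta_1^2$. For the remaining positive eigenvalues I would apply the left half of the interlacing, $\lambda_{i-1}\ge\theta_i$, with $i=3,\dots,\pi_H$ (a valid range since $\nu_H\ge2$ forces $\pi_H\le n-1$): each $\theta_i>0$ then forces $\lambda_{i-1}>0$ and $\lambda_{i-1}^2\ge\theta_i^2$. Thus $\lambda_1,\dots,\lambda_{\pi_H-1}$ are all positive eigenvalues of $G$, and summing yields $s^+(G)\ge\sum_{i=1}^{\pi_H-1}\lambda_i^2\ge\theta_1^2+\sum_{i=3}^{\pi_H}\theta_i^2=s^+(H)-\theta_2^2$. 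The symmetric argument for $s^-(G)$ uses the right half of the interlacing, $\lambda_{i+1}\le\theta_i$, with $i=n-\nu_H+1,\dots,n-1$ (valid since $\pi_H\ge2$ forces $\nu_H\le n-1$): each such $\theta_i<0$ forces $\lambda_{i+1}<0$ and $\lambda_{i+1}^2\ge\theta_i^2$, so $\lambda_{n-\nu_H+2},\dots,\lambda_n$ are negative eigenvalues of $G$ and summing gives $s^-(G)\ge\sum_{i=n-\nu_H+1}^{n-1}\theta_i^2=s^-(H)-\theta_n^2$.

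The conceptual point, which also explains the asymmetry between the two inequalities, is the choice of which single $\theta$-term is unavoidably discarded. Interlacing alone pairs $\theta_i$ with $\lambda_{i-1}$ for $i\ge2$, which would leave $\theta_1^2$ unaccounted for on the positive side; the extra input $\lambda_1\ge\theta_1$ lets me anchor $\theta_1$ to $\lambda_1$ and shift the pairing so that the \emph{smaller} term $\theta_2^2$ is the one dropped, giving the slightly stronger bound. On the negative side there is no analogous monotonicity of the least eigenvalue under edge addition, so the most negative term $\theta_n^2$ must be discarded. I expect the only real care needed is the bookkeeping that guarantees the matched eigenvalues $\lambda_i$ are genuinely of the correct sign (so that they actually contribute to $s^+(G)$ or $s^-(G)$) and that the index ranges are nonempty; this is exactly where the hypotheses $\pi_H\ge2$ and $\nu_H\ge2$ enter.
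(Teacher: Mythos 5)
Your proof is correct and follows essentially the same route as the paper: the edge-interlacing bounds $\lambda_{i-1}\ge\theta_i$ and $\theta_i\ge\lambda_{i+1}$ combined with the monotonicity $\lambda_1\ge\theta_1$ of the spectral radius, dropping $\theta_2^2$ on the positive side and $\theta_n^2$ on the negative side. Your explicit bookkeeping that the matched $\lambda_i$ have the correct sign (so their squares genuinely contribute to $s^+(G)$ or $s^-(G)$) is only implicit in the paper's argument, but the two proofs are the same in substance.
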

\begin{proof}
Let $\pi_H\geq 2$ be the number of positive eigenvalues of $H$, and $\nu_H\ge 2$ be  the number of negative eigenvalues of $H$. 
Theorem \ref{thm:edgeinterlacing} gives us that
\[
\lambda_{i-1} \geq \theta_{i} \geq \lambda_{i+1} \qquad \text{for } i=2,\ldots,n-1,
\]
$\theta_1\geq \lambda_2$ and $\theta_{n}\leq \lambda_{n-1}$. Further, we may also observe that $\lambda_1 \ge \theta_1$ since $H \subset G$.

We thus have that 
\[
\lambda_{i-1}^2 \geq \theta_i^2, \quad \text{for } i = 2, \ldots, \pi_H
\]

and $\lambda_1^2 \geq \theta_1^2\geq \lambda_2^2$.

Analogously, we have that
\[
\lambda_{i+1}^2 \geq \theta_{i}^2, \quad \text{for } i = { n -\nu_H+1} 
, \dots,n-1
\]

and $\theta_{n}^2\geq \lambda_{n-1}^2$.

Thus, we have that 
\[
s^+(G) \geq \sum_{i=2}^{\pi_H} \lambda_{i-1}^2 = \lambda_1^2 + \sum_{i=3}^{\pi_H} \lambda_{i-1}^2 \geq \theta_1^2 + \sum_{i=3}^{\pi_H}\theta_i^2  = s^+(H) - \theta_{2}^2
\]
and 
\[
s^-(G) \geq \sum_{i={ n}-\nu_H+1}^{n-1} \lambda_{i + 1}^2 \geq \sum_{i={n}-\nu_H+1}^{n-1} \theta_i^2 = s^-(H) - \theta_{n}^2,
\]
as claimed.
\end{proof}

We obtain the following result as an immediate corollary.
\begin{corollary}
If $G$ is a graph on $n$ vertices and $H = G - e$ (where $e$ is an edge of $G$) satisfies $s^+(H) - \theta_{2}^2\ge n-1$ and $s^-(H) - \theta_{n}^2\ge n-1$, then  $s^+(G),s^-(G) \geq n-1$. 
\end{corollary}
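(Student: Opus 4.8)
The plan is to obtain this as a one-line consequence of the preceding theorem; essentially all the work has already been done, and what remains is to match hypotheses and chain inequalities. First I would record the two conclusions of the theorem: whenever $H$ has at least two positive and at least two negative eigenvalues, we have $s^+(G)\ge s^+(H)-\theta_2^2$ and $s^-(G)\ge s^-(H)-\theta_n^2$. These are exactly the quantities appearing in the corollary's hypotheses, so no auxiliary estimate is needed.

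Next I would simply transitively combine these with the assumptions. From $s^+(G)\ge s^+(H)-\theta_2^2$ together with the hypothesis $s^+(H)-\theta_2^2\ge n-1$ one concludes $s^+(G)\ge n-1$; in exactly the same way $s^-(G)\ge s^-(H)-\theta_n^2\ge n-1$. This delivers both asserted bounds simultaneously, so the corollary reduces to nothing more than substituting the hypothesized lower bounds into the theorem's inequalities.

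The single point requiring attention -- and the only real obstacle, albeit a minor one -- is the eigenvalue-count condition baked into the theorem: its inequalities are proved only under the standing assumption that $H$ has at least two positive and at least two negative eigenvalues, which is what legitimizes treating $\theta_2$ as a contributor to the positive part and $\theta_n$ as a genuine negative eigenvalue. I would therefore either carry this spectral hypothesis on $H$ explicitly into the corollary, or dispose of the degenerate cases separately by appealing to the fact, cited earlier from \cite{EFGW16}, that Conjecture \ref{conj:energies} already holds for graphs having exactly one positive (respectively, one negative) eigenvalue. Once that caveat is acknowledged, the corollary follows immediately with no further computation.
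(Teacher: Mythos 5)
Your proof is correct and is essentially the paper's: the paper gives no separate argument, presenting the statement as an ``immediate corollary'' of the preceding theorem, i.e., exactly the chaining $s^+(G)\ge s^+(H)-\theta_2^2\ge n-1$ and $s^-(G)\ge s^-(H)-\theta_n^2\ge n-1$ that you describe. Your attention to the hypothesis that $H$ have at least two positive and two negative eigenvalues is in fact more scrupulous than the paper (which leaves that condition implicit); just note that your second fix is slightly misaimed, since the result from \cite{EFGW16} concerns the graph whose square energies are being bounded ($G$, not $H$), so the clean options are your first one (carry the spectral hypothesis on $H$ into the corollary) or the observation that $s^-(H)-\theta_n^2\ge n-1$ already forces $\nu_H\ge 2$, while if $\pi_H=1$ then $\theta_2\le 0$ and $s^+(G)\ge\lambda_1^2\ge\theta_1^2\ge s^+(H)-\theta_2^2\ge n-1$ directly.
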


\subsection{Moving neighbors from one vertex to another}\label{subsec:movingneighbors}

In this section we will focus on the following operation on graphs discussed in \cite{WSL2009}. For a graph {$G$} 
with two vertices $u$ and $v$ {and a set of vertices} $\{w_1, w_2, \ldots w_r\} \subseteq N_G(v) \setminus (N_G(u) \cup \{u\})$, let {$G_{u, v}$} 
denote the graph with vertex set $V(G_{u,v}) = V(G)$ and edge set $ E(G_{u, v}) = (E(G) \setminus \{v w_i \textrm{ for } 1 \le i \le r\}) \cup \{u w_i \textrm{ for } 1 \le i \le r\}.$
We say that $G_{u, v}$ is the graph obtained by \textit{moving the neighbors $ \{w_1, w_2, \ldots, w_r\}$ of $v$ to $u$} (note that defining $G_{u,v}$ as moving the neighbors $ \{w_1, w_2, \ldots, w_r\}$ of $v$ to $u$ implies $ \{w_1, w_2, \ldots, w_r\}$ satisfy the  condition of the definition). An example of this operation is shown in Figure \ref{fig:movingnbrs}. {Note that the symbol $G_{u, v}$ can denote more than one graph, since the set of vertices  moved is not embedded in the notation.}

\begin{figure}[htbp]
    \centering
    \includegraphics[scale=0.5]{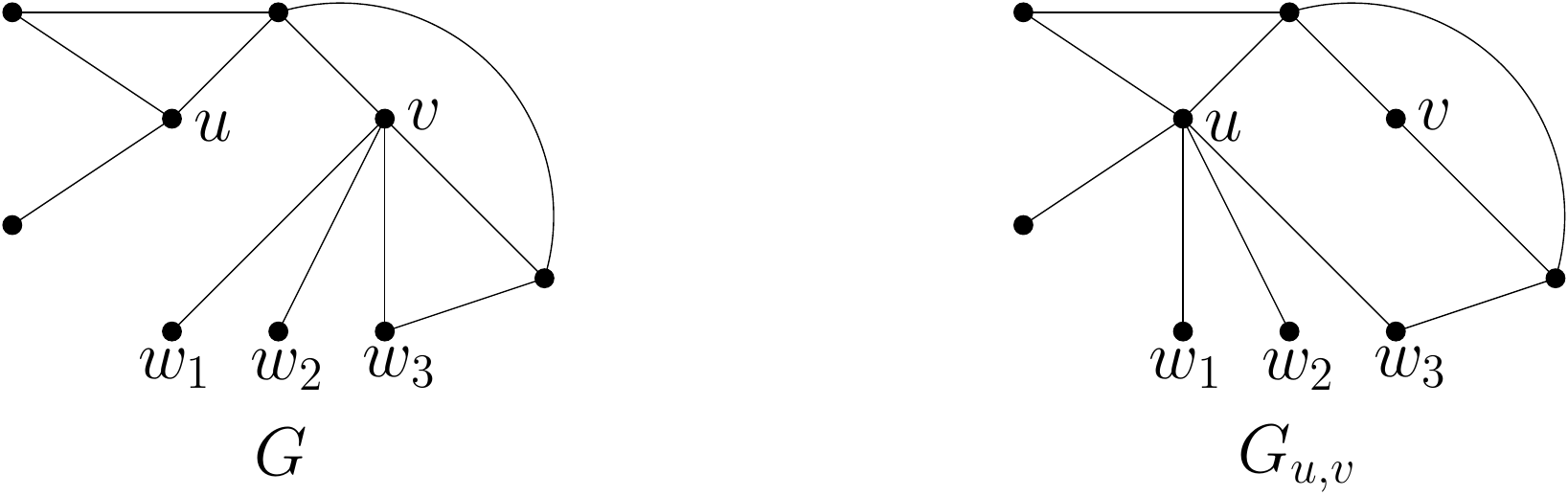}
    \caption{Graph $G_{u,v}$ is obtained from $G$ by moving the neighbors $ \{w_1,w_2,w_3\}$ of $v$ to $u$. 
    \label{fig:movingnbrs}}
\end{figure}

{Wu, Shao, and Liu  proved the next  result describing  interlacing  of the spectra of $G$ and $G_{u, v}$ in \cite{WSL2009}. }

{\begin{theorem}{\rm \cite{WSL2009}}
\label{moving neighbors interlacing lemma}
Let $G$ be a graph, let $u,v$ be two vertices of $G$, and let 
$G_{u, v}$ be the graph obtained from $G$ by moving the neighbors $\{w_1, w_2, \ldots, w_r \}$ of $v$ to $u$. 
Denote the eigenvalues of $A(G)$ and $A(G_{u,v})$ by $\lambda_1 \geq \cdots \geq \lambda_n $ and $\theta_1\geq \cdots \geq \theta_{n}$, respectively.  Then
\[\lambda_{i-1} \geq \theta_i \geq \lambda_{i+1}, \textrm{ for } i=2,3,\ldots, n-1,\ \theta_{1} \geq \lambda_2, \textrm{ and } \lambda_{n-1}\,  \geq \, \theta_n.\]  
\end{theorem}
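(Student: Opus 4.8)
The plan is to realize the move-neighbors operation as a rank-two additive perturbation of the adjacency matrix and then apply Weyl's inequalities for symmetric matrices. Writing $A=A(G)$ and $B=A(G_{u,v})$, moving the neighbors $\{w_1,\dots,w_r\}$ of $v$ to $u$ deletes the edges $vw_i$ and inserts the edges $uw_i$, so that
\[
B = A + \sum_{i=1}^r\bigl(\mathbf e_u\mathbf e_{w_i}^\top + \mathbf e_{w_i}\mathbf e_u^\top - \mathbf e_v\mathbf e_{w_i}^\top - \mathbf e_{w_i}\mathbf e_v^\top\bigr) = A + \Delta,
\]
where, setting $\mathbf a=\mathbf e_u-\mathbf e_v$ and $\mathbf w=\sum_{i=1}^r \mathbf e_{w_i}$, the perturbation factors cleanly as $\Delta=\mathbf a\mathbf w^\top+\mathbf w\mathbf a^\top$. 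The defining condition $w_i\in N_G(v)\setminus(N_G(u)\cup\{u\})$ guarantees that each deleted edge $vw_i$ is genuinely present and each inserted edge $uw_i$ is genuinely absent, and it forces $w_i\notin\{u,v\}$, so $\mathbf a$ and $\mathbf w$ have disjoint supports. I would record this step carefully, since it is precisely what makes $\Delta$ encode the operation with no double counting.

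Next I would analyze $\Delta$. Because $\mathbf a$ and $\mathbf w$ have disjoint supports, they are linearly independent and satisfy $\mathbf a^\top\mathbf w=0$; hence $\Delta$ is symmetric of rank two with $\tr\Delta=2\,\mathbf a^\top\mathbf w=0$. Its two nonzero eigenvalues therefore sum to zero, so they equal $\mu$ and $-\mu$ for some $\mu>0$, and I can write $\Delta=\mu\,\mathbf p\mathbf p^\top-\mu\,\mathbf q\mathbf q^\top$ for orthonormal $\mathbf p,\mathbf q$. The purpose of this decomposition is to split $\Delta$ into a rank-one positive semidefinite part and a rank-one negative semidefinite part, each of which perturbs the spectrum in a controlled, interlacing manner.

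I would then chain the two rank-one interlacing inequalities. Setting $A'=A+\mu\,\mathbf p\mathbf p^\top$, adding a rank-one positive semidefinite matrix gives $\lambda_k(A)\le\lambda_k(A')\le\lambda_{k-1}(A)$, while subtracting the rank-one positive semidefinite matrix $\mu\,\mathbf q\mathbf q^\top$ from $A'$ to obtain $B$ gives $\lambda_{k+1}(A')\le\lambda_k(B)\le\lambda_k(A')$. Composing these for $2\le i\le n-1$ yields
\[
\theta_i=\lambda_i(B)\le\lambda_i(A')\le\lambda_{i-1}(A)=\lambda_{i-1},
\qquad
\theta_i=\lambda_i(B)\ge\lambda_{i+1}(A')\ge\lambda_{i+1}(A)=\lambda_{i+1},
\]
which is exactly the claimed two-sided interlacing. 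The two boundary statements follow from the same chain: $\theta_1=\lambda_1(B)\ge\lambda_2(A')\ge\lambda_2$ and $\theta_n=\lambda_n(B)\le\lambda_n(A')\le\lambda_{n-1}$.

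I expect the only real subtlety, rather than a genuine obstacle, to be the bookkeeping of indices when composing the two rank-one interlacings, together with verifying that the hypothesis on $\{w_1,\dots,w_r\}$ is exactly what is needed for $\Delta$ to have the clean trace-zero, rank-two form (in particular that no inserted edge already exists and no deleted edge is missing, so that $\Delta$ is $\mathbf a\mathbf w^\top+\mathbf w\mathbf a^\top$ on the nose). Everything else is a direct application of Weyl's inequalities, and the argument parallels the proof structure already used for edge interlacing in Theorem~\ref{thm:edgeinterlacing}.
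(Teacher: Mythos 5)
The paper does not prove this theorem: it is imported verbatim from \cite{WSL2009}, so there is no internal proof to measure your argument against. Judged on its own merits, your proof is correct and complete. The identity $A(G_{u,v}) = A(G) + \Delta$ with $\Delta = \mathbf{a}\mathbf{w}^\top + \mathbf{w}\mathbf{a}^\top$, $\mathbf{a} = \mathbf{e}_u - \mathbf{e}_v$ and $\mathbf{w} = \sum_{i=1}^r \mathbf{e}_{w_i}$, is exactly what the hypothesis $w_i \in N_G(v)\setminus(N_G(u)\cup\{u\})$ buys: every deleted edge $vw_i$ is present, every inserted edge $uw_i$ is absent, and $w_i \notin \{u,v\}$, so $\mathbf{a}$ and $\mathbf{w}$ have disjoint supports and $\mathbf{a}^\top\mathbf{w} = 0$. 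From $\Delta\mathbf{a} = \|\mathbf{a}\|^2\mathbf{w}$ and $\Delta\mathbf{w} = \|\mathbf{w}\|^2\mathbf{a}$ one sees directly that the two nonzero eigenvalues of $\Delta$ are $\pm\|\mathbf{a}\|\,\|\mathbf{w}\| = \pm\sqrt{2r}$, confirming your decomposition $\Delta = \mu\,\mathbf{p}\mathbf{p}^\top - \mu\,\mathbf{q}\mathbf{q}^\top$, and the two rank-one Weyl steps compose exactly as you state to give $\lambda_{i-1}\ge\theta_i\ge\lambda_{i+1}$ together with the boundary inequalities (under the usual convention $\lambda_0 = +\infty$). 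Two small points are worth making explicit in a final write-up: you need $r\ge 1$ for $\Delta$ to have rank two (for $r=0$ the statement is vacuous since $G_{u,v}=G$), and the inequality $\lambda_{i+1}(A')\ge\lambda_{i+1}(A)$ in your lower-bound chain is the ``eigenvalues do not decrease under a positive semidefinite perturbation'' half of Weyl, which deserves an explicit citation. Your closing observation is apt: this theorem and Theorem~\ref{thm:edgeinterlacing} are both instances of the general fact that if $B-A$ is symmetric with exactly one positive and one negative eigenvalue, then the spectra of $A$ and $B$ interlace in this two-sided sense, which is how results of this type are established in the cited literature.
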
}

{In the next result we apply the preceding theorem to two special cases to improve the bound on the spectral radius.}  
\begin{lemma}
\label{vertex identification spectral radius lemma}
Let {$G$} 
be a graph with spectral radius {$\rho(G)$} 
and Perron vector $\bx=[x_i]$ {and let $G_{u,v}$ be the graph obtained by  moving $\{w_1, w_2, \ldots, w_r\}$ from $v$ to $u$. 
If} 
\begin{enumerate}
    \item ${x}_u \ge {x}_v$, or
    \item $N_G(v) \cap (N_G(u) \cup \{u\}) = \emptyset$ and  $\{w_1, w_2, \ldots, w_r\} = N_G(v)$,
\end{enumerate}
then 
{$\rho(G_{u,v}) \ge \rho(G)$.} 
Furthermore, if $G$ is connected, then {$\rho(G_{u,v}) > \rho(G)$.} 
\end{lemma}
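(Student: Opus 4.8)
The plan is to bound $\rho(G_{u,v})$ from below by a Rayleigh quotient of $A(G_{u,v})$ evaluated at a test vector built from the Perron vector $\bx$ of $G$. Since $A(G_{u,v})$ is nonnegative and symmetric, Perron--Frobenius theory gives $\rho(G_{u,v})=\lambda_1(A(G_{u,v}))\ge \frac{\bz^\top A(G_{u,v})\bz}{\bz^\top\bz}$ for every nonzero $\bz$, with equality forcing $\bz$ to be a top eigenvector; so it suffices to produce a test vector whose Rayleigh quotient is at least $\rho(G)=\frac{\bx^\top A(G)\bx}{\bx^\top\bx}$. Throughout I will use that $A(G_{u,v})$ differs from $A(G)$ only by deleting the entries for the edges $vw_i$ and inserting those for $uw_i$, together with the eigen-equations $\sum_{w\in N_G(u)}x_w=\rho(G)\,x_u$ and $\sum_{w\in N_G(v)}x_w=\rho(G)\,x_v$, and I assume at least one vertex is moved ($r\ge1$).

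For the first hypothesis I take $\bz=\bx$. A direct expansion of the rank-two difference yields $\bx^\top\!\big(A(G_{u,v})-A(G)\big)\bx=2(x_u-x_v)\sum_{i=1}^r x_{w_i}$, which is nonnegative once $x_u\ge x_v$ because Perron entries are nonnegative. Hence the Rayleigh quotient of $A(G_{u,v})$ at $\bx$ is at least $\rho(G)$, giving $\rho(G_{u,v})\ge\rho(G)$.

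The second hypothesis is the main obstacle, since there $x_u\ge x_v$ need not hold and $\bx$ may fail as a test vector. Here I instead pour the weight at $v$ onto $u$: set $\by$ equal to $\bx$ off $\{u,v\}$, with $y_v=0$ and $y_u=\sqrt{x_u^2+x_v^2}$, so that $\by^\top\by=\bx^\top\bx$. Because $\{w_1,\dots,w_r\}=N_G(v)$ and $N_G(v)\cap(N_G(u)\cup\{u\})=\emptyset$, vertex $v$ becomes isolated in $G_{u,v}$ while $u$ is joined to $N_G(u)\du \{w_1,\dots,w_r\}$ (a disjoint union); substituting the two eigen-equations, the quadratic form collapses to $\by^\top A(G_{u,v})\by=\bx^\top A(G)\bx+2\rho(G)\big(\sqrt{x_u^2+x_v^2}\,(x_u+x_v)-x_u^2-x_v^2\big)$. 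The bracketed quantity is nonnegative precisely because $x_u+x_v\ge\sqrt{x_u^2+x_v^2}$ for $x_u,x_v\ge0$ (equivalently $2x_ux_v\ge0$), which again gives $\rho(G_{u,v})\ge\rho(G)$.

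Finally, for the strict inequality when $G$ is connected I use that every Perron entry is strictly positive and that $r\ge1$ (in the second case this holds because $v$ has a neighbor). In the second case the displayed bracket is strictly positive since $x_u,x_v>0$, so the Rayleigh quotient already exceeds $\rho(G)$. In the first case, suppose for contradiction that $\rho(G_{u,v})=\rho(G)$; then the Rayleigh quotient at $\bx$ is squeezed to equal the largest eigenvalue of $A(G_{u,v})$, so $\bx$ must be a top eigenvector, i.e.\ $A(G_{u,v})\bx=\rho(G)\bx$. Reading off the $v$-coordinate gives $\rho(G)x_v-\sum_{i=1}^r x_{w_i}=\rho(G)x_v$, forcing $\sum_{i=1}^r x_{w_i}=0$, which is impossible since the $x_{w_i}$ are positive and $r\ge1$. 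This contradiction yields $\rho(G_{u,v})>\rho(G)$.
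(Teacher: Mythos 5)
Your proof is correct. Case 1 is essentially the paper's argument verbatim: the test vector $\bx$, the rank-two difference identity $\bx^T(A(G_{u,v})-A(G))\bx = 2(x_u-x_v)\sum_{i=1}^r x_{w_i}$, and the strictness-by-contradiction step (the paper reads the $u$-coordinate of the eigen-equation $A(G_{u,v})\bx=\rho(G)\bx$ where you read the $v$-coordinate; both force $\sum_{i=1}^r x_{w_i}=0$, the same contradiction). Case 2 is where you genuinely deviate. The paper first reduces to Case 1 when $x_u\ge x_v$, and otherwise uses the test vector with $z_u=x_v$, $z_v=0$, whose squared norm drops by $x_u^2$; the quadratic form is only shown not to decrease (this is what needs $x_v>x_u$, hence the paper's sub-case split), and strictness for connected $G$ comes from the strictly smaller denominator. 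Your test vector $y_u=\sqrt{x_u^2+x_v^2}$, $y_v=0$ is norm-preserving, which lets you treat hypothesis 2 uniformly with no sub-case split; after substituting the eigen-equations at both $u$ and $v$, the gap $2\rho(G)\bigl(\sqrt{x_u^2+x_v^2}\,(x_u+x_v)-x_u^2-x_v^2\bigr)$ is manifestly nonnegative, and strictly positive when the Perron entries are positive, so strictness falls out of the numerator directly rather than the denominator. Both proofs are Rayleigh-quotient perturbations of the Perron vector; yours buys uniformity and a cleaner equality analysis in Case 2, while the paper's avoids the square-root bookkeeping. One shared caveat: the strict inequality needs $r\ge 1$ (you state this explicitly, the paper leaves it implicit), which under hypothesis 2 is automatic for a connected graph on at least two vertices.
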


\begin{proof}
{Let $\theta_1=\rho(G_{u,v})$ and $\lambda_1=\rho(G)$.}
For a graph $H = (V(H), E(H))$ with spectral radius {$\rho(H)$}, and Perron vector $\bh$, the Rayleigh {quotient} characterization of the spectral radius gives \[\rho(H) = \max_{\bz \neq \bzero} \dfrac{\sum_{ij \in E(H)} 2{z_i z_j}}{\sum_{i \in V(H)}z_i^2} = \dfrac{\sum_{ij \in E(H)} 2h_i h_j}{\sum_{i \in V(H)}h_i^2},\] where $\bz$ runs over all non-zero vectors.
To prove the result, {we show that with either of the hypotheses,} \[\theta_1 - \lambda_1 =  \left( \max_{ \bz \neq \bzero} \dfrac{\sum_{ij \in E(G_{u, v})} 2z_i z_j}{\sum_{i \in V(G_{u, v})}z_i^2}\right) - \dfrac{\sum_{ij \in E(G)} 2{x}_i {x}_j}{\sum_{i \in V(G)}{x}_i^2} \ge 0\]
{with strict inequality if $G$ is connected.}
Recall that $V(G_{u, v}) = V(G)$ and all the entries of the Perron vector of any graph are nonnegative. 

{We consider the two cases separately.}

\begin{description}
    \item[Case 1.]Let $\mathrm{x}_u \ge \mathrm{x}_v$.
Consider the vector $\bz = \bx$. Then, 
\beq\label{eq:nhbrmov}
\theta_1 - \lambda_1 \geq  \dfrac{\sum_{ij \in E(G_{u, v})} 2x_i x_j - \sum_{ij \in E(G)} 2x_i x_j}{\sum_{i \in V(G)}x_i^2} = \dfrac{ \sum_{i = 1}^r 2 \, (x_u - x_v) x_{ w_i}}{\sum_{i \in {V(G)}}x_i^2} \ge 0.
\eeq
{Now suppose $G$ is connected,  which implies the Perron vector $\bx$ is a  positive vector. Assume that $\theta_1 = \lambda_1$. 
Then all the inequalities must be equalities in equation \eqref{eq:nhbrmov}. Requiring that the first inequality be equality implies that  \[\frac{\bx^T(A(G_{u,v})\bx}{\bx^T\bx}=\max_{\bz\ne\bzero}\frac{\bz^T(A(G_{u,v})\bz}{\bz^T\bz}.\] Thus $\bx$ is also a Perron vector for $G_{u, v}$. } Then, 
\[\sum_{ y \sim_G u} {x}_y = (A(G)\bx)_u=\lambda_1 x_u=\theta_1 x_u=  (A(G_{u,v})\bx)_u=\sum_{ y \sim_G u} {x}_y + \sum_{i=1}^r{x}_{ w_i},\]  where the equality $\theta_1 x_u  = (A(G_{u,v})\bx)_u$ appears because $\bx$ is a Perron vector of $A(G_{u,v})$. 
However, this is a contradiction since  $\sum_{i=1}^r {x}_{ w_i} > 0$. Thus, $\theta_1 > \lambda_1$ when $G$ is connected.
\item[Case 2.] Let $N_G(v) \cap (N_G(u) \cup \{u\}) = \emptyset$. If $x_u \ge x_v$, then we are done using the previous case. So we assume $x_v > x_u$. Consider the vector $\bz$ defined as follows:
\[{z}_i = \begin{cases}
{x_i}, \textrm{ for } i \neq u, v,\\
{x_v}, \textrm{ for } i = u,\\
0, \textrm{ for } i = v.
\end{cases}\]

Then 
\[\theta_1 - \lambda_1 \geq  \dfrac{\sum_{ij \in E(G_{u, v})} 2{z}_i {z}_j}{(\sum_{i \in V(G)}{x}_i^2) - {x}_u^2} - \dfrac{\sum_{ij \in E(G)} 2{x}_i {x}_j}{\sum_{i \in V(G)}{x}_i^2} \ge \dfrac{\sum_{i=1}^r 2({x}_v - {x}_v) {x}_{ w_i}}{\sum_{i \in V(G)}{x}_i^2} = 0.
\]
Finally, we note that if $G$ is connected, then ${x_u} > 0$ and 
 therefore the second inequality in the preceding equation is strict, so  $\theta_1 > \lambda_1$.
\end{description}\end{proof}

 Now we are ready to establish the next result, which gives a lower bound for $s^+(G_{u, v})$ in terms of $s^+(G)$  and $s^-(G_{u, v})$ in terms of $s^-(G)$.

\begin{theorem}
\label{moving neighbors squared energies bound}
Let {$G$} be a graph,  let $G_{u,v}$ be the graph obtained by moving the neighbors {$\{w_1, w_2, \ldots, w_r\} $} from $v$ to $u$, and let $\lambda_1 \geq \cdots \geq \lambda_n $ and $\theta_1\geq \cdots \geq \theta_{n}$ denote the eigenvalues of $A(G)$ and $A(G_{u,v})$, respectively. 
Then $s^+(G_{u,v}) \geq s^+(G) - \lambda_1^2$ 
and 
$s^-(G_{u,v})  \geq s^-(G)-\lambda_n^2$.

If 
$1)$ ${x}_u \ge {x}_v$ (where $\bx$ is the Perron vector of $A(G)$) or
$2)$  $N(u) \cap (N(v) \cup \{v\})=\emptyset$ and  $ \{w_1, w_2, \ldots, w_r\} = N_G(v)$,
 then the bound for $s^+(G)$ can be improved to
$s^+(G_{u,v}) \geq s^+(G) - \lambda_2^2$.
\end{theorem}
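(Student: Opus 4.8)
The plan is to apply the moving-neighbors interlacing theorem (Theorem \ref{moving neighbors interlacing lemma}) directly, in close analogy with the edge-deletion argument given just above. First I would combine the interlacing inequalities: $\theta_1 \ge \lambda_2$ together with $\theta_i \ge \lambda_{i+1}$ for $i = 2, \ldots, n-1$ merge into the single chain $\theta_i \ge \lambda_{i+1}$ valid for all $i = 1, \ldots, n-1$, and symmetrically $\lambda_{i-1} \ge \theta_i$ for $i = 2, \ldots, n-1$ together with $\lambda_{n-1} \ge \theta_n$ gives $\theta_i \le \lambda_{i-1}$ for all $i = 2, \ldots, n$. These two chains are the only inputs needed for the two unconditional bounds.

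For the positive bound, let $\pi_G$ be the number of positive eigenvalues of $G$. For $i = 1, \ldots, \pi_G - 1$ we have $\theta_i \ge \lambda_{i+1} > 0$, so each such $\theta_i$ is a positive eigenvalue of $G_{u,v}$ and $\theta_i^2 \ge \lambda_{i+1}^2$. Summing and discarding any further positive $\theta_i$ (which only increases $s^+(G_{u,v})$) gives
\[
s^+(G_{u,v}) \ge \sum_{i=1}^{\pi_G-1}\theta_i^2 \ge \sum_{i=1}^{\pi_G-1}\lambda_{i+1}^2 = \sum_{j=2}^{\pi_G}\lambda_j^2 = s^+(G) - \lambda_1^2.
\]
The negative bound is entirely parallel: with $\nu_G$ the number of negative eigenvalues of $G$, the indices $i = n-\nu_G+2, \ldots, n$ satisfy $\theta_i \le \lambda_{i-1} < 0$, so each such $\theta_i$ is a negative eigenvalue of $G_{u,v}$ with $\theta_i^2 \ge \lambda_{i-1}^2$, and summing yields $s^-(G_{u,v}) \ge \sum_{j=n-\nu_G+1}^{n-1}\lambda_j^2 = s^-(G) - \lambda_n^2$.

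For the improved positive bound, I would invoke Lemma \ref{vertex identification spectral radius lemma}: under either hypothesis $(1)$ or $(2)$ it gives $\theta_1 = \rho(G_{u,v}) \ge \rho(G) = \lambda_1 > 0$, hence $\theta_1^2 \ge \lambda_1^2$. This replaces the weaker interlacing estimate $\theta_1^2 \ge \lambda_2^2$ used above, while I keep the estimates $\theta_i^2 \ge \lambda_{i+1}^2$ for $i = 2, \ldots, \pi_G - 1$ (each still valid and positive), so that
\[
s^+(G_{u,v}) \ge \theta_1^2 + \sum_{i=2}^{\pi_G-1}\theta_i^2 \ge \lambda_1^2 + \sum_{j=3}^{\pi_G}\lambda_j^2 = s^+(G) - \lambda_2^2.
\]
The computation is routine interlacing bookkeeping; the one point requiring care is the sign accounting that identifies exactly which $\theta_i$ are guaranteed positive (respectively negative), together with the degenerate cases $\pi_G = 1$ or $\nu_G = 1$, where the relevant sums are empty and the claimed bounds hold trivially since $s^+, s^- \ge 0$. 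The only genuinely new ingredient beyond the edge-deletion proof is the spectral-radius input $\theta_1 \ge \lambda_1$ supplied by Lemma \ref{vertex identification spectral radius lemma}, which is precisely what upgrades the loss from $\lambda_1^2$ to $\lambda_2^2$.
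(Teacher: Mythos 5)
Your proposal is correct and follows essentially the same route as the paper: the unconditional bounds come from the Wu--Shao--Liu interlacing inequalities (Theorem \ref{moving neighbors interlacing lemma}) after squaring only those $\theta_i$ guaranteed positive (resp.\ negative), and the improved bound comes from the spectral-radius inequality $\theta_1 \ge \lambda_1$ of Lemma \ref{vertex identification spectral radius lemma}. The only difference is cosmetic: by summing just over $i = 1,\ldots,\pi_G-1$ and discarding any remaining positive $\theta_i$, you avoid the paper's case split on whether $\pi_G = \pi_{G_{u,v}}+1$ or $\pi_G \le \pi_{G_{u,v}}$, which slightly streamlines the bookkeeping.
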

\begin{proof} Let $\pi_G$ and $\pi_{G_{u,v}}$ denote the number of positive eigenvalues of $G$ and $G_{u,v}$ respectively.
 Theorem \ref{moving neighbors interlacing lemma}  implies that
\[
 \theta_{i} \geq \lambda_{i+1} \mbox{ for } i=1,\ldots,\pi_{G_{u,v}},
\qquad\mbox{ and }\qquad
\lambda_{i} \geq \theta_{i+1 }  \mbox{ for } i=\pi_{G_{u,v}}+1,\ldots,n-1.
\]
Observe that $\lambda_{\pi_{G}-1} \geq \theta_{\pi_{G}} > 0$ and $0 \ge \theta_{\pi_{G_{u, v}} + 1} \ge \lambda_{\pi_{G_{u, v}} + 2}$, so $\pi_{G_{u, v}}-1 \leq \pi_G \leq \pi_{G_{u, v}} + 1.$
We break the proof of $s^+(G_{u, v}) \ge s^+(G) - \lambda_1^2$ into two parts. 
If $\pi_G = \pi_{G_{u, v}} + 1$, then 
 $
\theta_{i}^2 \geq \lambda_{i+1}^2$ for $ i = 1,\dots,\pi_{G_{u, v}}, 
$
and
\[
s^+(G_{u, v}) = \sum_{i=1}^{\pi_{G_{u,v}}} \theta_i^2 \geq  \sum_{i=2}^{\pi_{G}}\lambda_{i}^2 = s^+(G) - \lambda_1^2.
\]
If instead $\pi_G \le \pi_{G_{u, v}}$, then
$\theta_{i}^2 \geq \lambda_{i+1}^2$ for $ i = 1,\dots,\pi_{G} -1$ and
 \[
s^+(G_{u, v}) =  \left(\sum_{i=1}^{\pi_{G_{u,v}} - 1} \theta_i^2\right) + \theta_{\pi_{G_{u,v}}}^2 
\geq  \sum_{i=2}^{\pi_{G}}\lambda_{i}^2 + \theta_{\pi_{G_{u,v}}}^2 \ge s^+(G) - \lambda_1^2.
\]
 The proof that $s^-(G_{u, v}) \ge s^-(G) - \lambda_n^2$ is similar.

Now assume that $G$, $u$,  and $v$ satisfy the hypotheses of Lemma \ref{vertex identification spectral radius lemma}.  Then $ \sum_{i=1}^{p} \theta_i^2 $ and $ \sum_{i=2}^{\pi_G} \lambda_i^2 $ (where $p$ equals $\pi_{G_{u,v}}$ or  $\pi_{G_{u,v}}-1$ as needed) can be replaced by $ \theta^2_1+ \left(\sum_{i=2}^{p} \theta_i^2\right) $ and $\lambda_1^2+ \left(\sum_{i=3}^{\pi_G} \lambda_i^2\right) $ to obtain $s^+(G_{u,v}) \geq s^+(G) - \lambda_2^2$.
\end{proof}

\begin{remark} {\rm The process of moving neighbors from $v$ to $u$ is reversible.  That is, $\lp G_{u,v}\rp_{v,u}=G$ when the same set of vertices $\{w_1,\dots,w_r\}$ is used for each  move.  Thus from Theorem \ref{moving neighbors squared energies bound} we also obtain the  bounds $s^{+}(G) \geq s^+(G_{u,v})-\theta_{1}^2$ and $s^{-}(G) \geq s^-(G_{u,v})-\theta_{n}^2$.}  \end{remark}

\subsection{Induced subgraphs}\label{sec:interlacing}

{We  use interlacing to obtain lower bounds for the squared energies of $G$ in terms of the squared energies of induced subgraphs of $G$.} 

\begin{lemma}\label{lemma:induced subgraph}
Suppose a connected graph $G$ 
has an induced subgraph $H$.  
Then  $\pi_G\ge \pi_H$, $\nu_G\ge \nu_H$,   $s^+(G) \geq s^+(H)$, and $s^-(G) \geq s^-(H)$. \end{lemma}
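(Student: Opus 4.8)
The plan is to invoke Cauchy's interlacing theorem for principal submatrices of a symmetric matrix. The crucial observation is that, because $H$ is an \emph{induced} subgraph, $A(H)$ is precisely the principal submatrix of $A(G)$ whose rows and columns are indexed by $V(H)$: for $u,v\in V(H)$ we have $uv\in E(H)$ exactly when $uv\in E(G)$. Writing $n=|V(G)|$ and $m=|V(H)|$, and labeling the eigenvalues of $A(G)$ and $A(H)$ as $\lambda_1\ge\cdots\ge\lambda_n$ and $\theta_1\ge\cdots\ge\theta_m$, Cauchy interlacing gives
\[
\lambda_i \ge \theta_i \ge \lambda_{i+n-m}\qquad (i=1,\dots,m).
\]
I would then read off the two one-sided bounds $\lambda_i\ge\theta_i$ and $\theta_i\ge\lambda_{n-m+i}$ and use them separately for the positive and negative parts of the spectrum. (Note that connectedness of $G$ plays no role in the argument.)

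For the inertia inequalities, I would argue as follows. Since $\lambda_i\ge\theta_i$, each of the $\pi_H$ positive eigenvalues $\theta_1,\dots,\theta_{\pi_H}$ forces the corresponding $\lambda_i\ge\theta_i>0$, so $\pi_G\ge\pi_H$. Dually, since $\theta_i\ge\lambda_{n-m+i}$, each of the $\nu_H$ negative eigenvalues of $H$ (those with index $j=m-\nu_H+1,\dots,m$) forces $\lambda_{n-m+j}\le\theta_j<0$; as $j$ ranges over this block the index $n-m+j$ ranges over $n-\nu_H+1,\dots,n$, giving $\nu_G\ge\nu_H$.

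For the energy inequalities, I would combine these counts with the squared interlacing bounds. For $s^+$, the bound $\lambda_i\ge\theta_i>0$ for $i\le\pi_H$ yields $\lambda_i^2\ge\theta_i^2$, and since $\pi_G\ge\pi_H$ the additional squared terms in $s^+(G)$ are nonnegative, so
\[
s^+(G)=\sum_{i=1}^{\pi_G}\lambda_i^2 \ge \sum_{i=1}^{\pi_H}\lambda_i^2 \ge \sum_{i=1}^{\pi_H}\theta_i^2 = s^+(H).
\]
For $s^-$, the step requiring care is the sign reversal: for $j$ in the negative block we have $\lambda_{n-m+j}\le\theta_j<0$, so that \emph{squaring reverses the inequality}, $\lambda_{n-m+j}^2\ge\theta_j^2$. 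Re-indexing by $k=n-m+j$ and again using $\nu_G\ge\nu_H$ to absorb the extra nonnegative terms then gives $s^-(G)\ge s^-(H)$.

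The only real point of care — and the closest thing to an obstacle — is the bookkeeping in the negative part: one must use the correct side of the interlacing inequality ($\theta_i\ge\lambda_{n-m+i}$), track how the index block $m-\nu_H+1,\dots,m$ maps to $n-\nu_H+1,\dots,n$, and remember that among negative numbers the smaller value has the larger square, so that the interlacing inequality flips direction upon squaring.
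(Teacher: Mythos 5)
Your proof is correct and follows exactly the same route as the paper: both apply Cauchy interlacing to $A(H)$ as a principal submatrix of $A(G)$, obtain $\lambda_i(G)\ge\lambda_i(H)\ge\lambda_{n-m+i}(G)$, deduce the inertia inequalities, and then square the appropriate side of the interlacing bound for the positive and negative blocks. Your write-up merely makes explicit the index bookkeeping and sign-reversal details that the paper leaves implicit (and you are right that connectedness is never used).
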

\begin{proof}
Let $n_G$  be the number of vertices of $G$  and let $n_H$  be the number of vertices of $H$. 

Here we denote the $i$th largest {eigenvalues of $G$ and $H$ by $\lambda_i(G)$ and $\lambda_i(H)$}, so
$\lambda_i(G) \geq \lambda_i(H) \geq \lambda_{n_G-n_H+i}({G})$ by the Interlacing Theorem.   This implies that $\pi_G\ge \pi_H$ and $\nu_G\ge \nu_H$. Furthermore,
\[
\lambda_i(G)^2 \geq \lambda_i(H)^2, \quad i = 1, \ldots, \pi_H
\ \mbox{ 
and } \ 
\lambda_{n_G-n_H+i}(G)^2 \geq \lambda_i(H)^2, \quad i = {n_H}-\nu_H+1,\dots,{n_H}.
\]
{This implies $s^+(G)\ge s^+(H)$ and $s^-(G)\ge s^-(H)$.}
\end{proof}

We obtain the next result as an immediate corollary. 
\begin{corollary}
If a graph $G$ on $n$ vertices has an induced subgraph $H$ with $s^+(H)\ge n-1$ {(respectively, $s^-(H)\ge n-1$)}, then $s^+(G)\ge n-1$ {(respectively, $s^-(G) \geq n-1$)}. 
\end{corollary}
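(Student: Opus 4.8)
The plan is to apply Lemma~\ref{lemma:induced subgraph} directly; as the text signals, the corollary is an immediate consequence. First I would recall what the lemma supplies: whenever $H$ is an induced subgraph of $G$, the inequalities coming from the Interlacing Theorem force both $s^+(G)\ge s^+(H)$ and $s^-(G)\ge s^-(H)$. These two monotonicity statements are exactly the bridge the corollary needs, so no fresh spectral input is required.

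Next I would simply chain the inequalities. For the $s^+$ statement, under the hypothesis $s^+(H)\ge n-1$, I would write
\[
s^+(G)\ \ge\ s^+(H)\ \ge\ n-1,
\]
where the first inequality is Lemma~\ref{lemma:induced subgraph} and the second is the hypothesis. The $s^-$ statement is proved identically, replacing $s^+$ by $s^-$ throughout and invoking $s^-(G)\ge s^-(H)$ in place of $s^+(G)\ge s^+(H)$.

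I do not expect any genuine obstacle, since the substantive content — the interlacing estimate and the term-by-term comparison of squared eigenvalues — is already carried out in the proof of Lemma~\ref{lemma:induced subgraph}. The only point deserving a moment's attention is that the lemma is phrased for a connected graph $G$, while the corollary speaks of an arbitrary graph $G$ on $n$ vertices. This is harmless: the Interlacing Theorem applies to every real symmetric matrix and its principal submatrices regardless of connectedness, so the inequalities $s^+(G)\ge s^+(H)$ and $s^-(G)\ge s^-(H)$ persist without the connectivity assumption, and the chaining above goes through verbatim.
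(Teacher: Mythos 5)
Your proof is correct and matches the paper exactly: the paper treats this as an immediate consequence of Lemma~\ref{lemma:induced subgraph}, chaining $s^+(G)\ge s^+(H)\ge n-1$ (and likewise for $s^-$), just as you do. Your extra remark about the connectivity hypothesis is a valid and careful observation --- interlacing holds for any principal submatrix of a real symmetric matrix, so the monotonicity inequalities do not require $G$ to be connected, which justifies the corollary's slightly broader phrasing.
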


Since we may not know information about the squared energies of the induced subgraphs,  the next result may be more useful (it is immediate from the fact that a bipartite graph $H$ with $\ell$ edges has $s^+(H) = s^-(H)= \ell$).

\begin{corollary}\label{cor:induced subgraph}
If a graph $G$ on $n$ vertices has an induced bipartite subgraph with at least $n-1$ edges, then $s^+(G)\ge n-1$ and $s^-(G) \geq n-1$. 
\end{corollary}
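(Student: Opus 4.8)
The plan is to derive Corollary \ref{cor:induced subgraph} as a direct consequence of Lemma \ref{lemma:induced subgraph} together with the standard fact that a bipartite graph has symmetric positive and negative square energies. Suppose $G$ is a graph on $n$ vertices containing an induced bipartite subgraph $H$ with at least $n-1$ edges. My first step is to compute $s^+(H)$ and $s^-(H)$ exactly. For any graph, the sum of the squares of all eigenvalues equals the trace of $A^2$, which counts closed walks of length two and hence equals $2|E|$; thus $s^+(H)+s^-(H)=2|E(H)|$. The key observation for bipartite graphs is that their spectrum is symmetric about zero: if $\theta$ is an eigenvalue with multiplicity, so is $-\theta$, which follows from conjugating $A(H)$ by the diagonal $\pm 1$ matrix recording the bipartition. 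Symmetry of the spectrum forces the positive eigenvalues to be the negatives of the negative eigenvalues, so $s^+(H)=s^-(H)$, and combining with $s^+(H)+s^-(H)=2|E(H)|$ gives $s^+(H)=s^-(H)=|E(H)|$.

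With this identity in hand, the second step is immediate: since $|E(H)|\ge n-1$ by hypothesis, we obtain $s^+(H)=s^-(H)=|E(H)|\ge n-1$. Now I would invoke Lemma \ref{lemma:induced subgraph}, which states that if $G$ is connected and has $H$ as an induced subgraph then $s^+(G)\ge s^+(H)$ and $s^-(G)\ge s^-(H)$. Chaining the two bounds yields $s^+(G)\ge s^+(H)\ge n-1$ and similarly $s^-(G)\ge s^-(H)\ge n-1$, which is exactly the claim.

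The only technical point that needs a word of care is the connectivity hypothesis in Lemma \ref{lemma:induced subgraph}: the corollary as stated speaks of a graph $G$ on $n$ vertices without explicitly assuming connectivity. Strictly speaking the interlacing argument in Lemma \ref{lemma:induced subgraph} does not actually use connectedness of $G$ (the Cauchy interlacing inequalities $\lambda_i(G)\ge\lambda_i(H)\ge\lambda_{n_G-n_H+i}(G)$ hold for any induced subgraph of any symmetric matrix), so the inequalities $s^+(G)\ge s^+(H)$ and $s^-(G)\ge s^-(H)$ remain valid regardless, and the corollary goes through as stated. I do not anticipate a genuine obstacle here; the entire content is the bipartite identity $s^+(H)=s^-(H)=|E(H)|$, which the excerpt in fact already records in the paragraph preceding the corollary, so the proof reduces to citing that fact and applying the lemma.
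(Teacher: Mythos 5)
Your proof is correct and follows the same route as the paper: the bipartite identity $s^+(H)=s^-(H)=|E(H)|$ combined with the induced-subgraph interlacing bound of Lemma \ref{lemma:induced subgraph}, which is exactly what the paper means when it calls the corollary ``immediate.'' Your remark that the interlacing inequalities do not actually require connectivity of $G$ is a valid and worthwhile clarification, since the corollary is stated without that hypothesis.
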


Corollary \ref{cor:induced subgraph} is applied to cactus graphs in Section \ref{s:cacti}.

\subsection{Quotient  matrices}\label{sec:haemersinterlacing}

Let $M=[m_{ij}]$ be an $n\x n$ matrix and  $X=(X_1,\dots,  X_p)$ be a partition of {$ \{1,\dots,n\}$}. 
 Then the partition $X$ defines a {$p\x p$ 
 matrix} $[M_{ij}]$ where $M_{ij}=M[X_i|X_j]$ {is the submatrix with row indices in $X_i$ and column indices in $X_j$.
 
 The {\em characteristic matrix}  $S=[s_{ij}]$  of $X$ is the $n\x p$ matrix defined by  $s_{ij}=1$ if $i\in X_j$ and $s_{ij}=0$ if $i\not\in X_j$.

The {\em quotient matrix} $B=[b_{ij}]$ of $M$ for this partition is the $p\x p$ matrix with entry $b_{ij}$ equal to the average row sum of the submatrix $M_{ij}$.  More precisely,
$b_{ij}= \frac{1}{|X_{i}|}\ones_n^{T}M_{ij}\ones_p=\frac{1}{|X_{i}|}(S^{T}MS)_{ij}$
where $\ones_k$ is a $k$-vector with every entry equal to one.

{\begin{lemma}{\rm \cite
{H1995}}\label{cor:2.3Haemerspaper} If $B$ is the quotient matrix of a {symmetric  matrix $M$ with respect to a partition}, then the eigenvalues of $B$ interlace the eigenvalues of $M$.\end{lemma}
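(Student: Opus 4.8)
The plan is to realize the quotient matrix $B$ as a compression of $M$ onto the $p$-dimensional subspace spanned by the columns of the characteristic matrix $S$, and then to invoke the interlacing theorem for compressions (the Poincar\'e separation theorem, itself a consequence of the Courant--Fischer min-max characterization). The starting observation is that, since the parts $X_1,\dots,X_p$ are pairwise disjoint, the columns of $S$ are mutually orthogonal and $S^TS=\operatorname{diag}(|X_1|,\dots,|X_p|)$ is diagonal and positive definite. Combined with the formula $b_{ij}=\frac{1}{|X_i|}(S^TMS)_{ij}$ recorded above, this lets me write $B=(S^TS)^{-1}S^TMS$.

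First I would symmetrize. Setting $D=(S^TS)^{1/2}$, the diagonal matrix with entries $\sqrt{|X_i|}$, and $\widetilde S=SD^{-1}=S(S^TS)^{-1/2}$, I get $\widetilde S^T\widetilde S=I_p$, so $\widetilde S$ has orthonormal columns, and a direct computation gives $DBD^{-1}=(S^TS)^{-1/2}S^TMS(S^TS)^{-1/2}=\widetilde S^TM\widetilde S=:\widehat B$. Since $B$ and $\widehat B$ are similar they have the same eigenvalues, and $\widehat B$ is symmetric because $M$ is. Thus it suffices to prove that the eigenvalues of $\widehat B=\widetilde S^TM\widetilde S$ interlace those of $M$.

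The final step is precisely the statement that compressing a symmetric matrix by a matrix with orthonormal columns produces interlacing eigenvalues: if $\mu_1\ge\cdots\ge\mu_n$ are the eigenvalues of $M$ and $\beta_1\ge\cdots\ge\beta_p$ those of $\widehat B$, then $\mu_i\ge\beta_i\ge\mu_{n-p+i}$ for $i=1,\dots,p$. I would deduce this from Courant--Fischer: for each $i$ one has $\beta_i=\max_{\dim U=i}\min_{0\ne z\in U}\frac{z^T\widehat Bz}{z^Tz}=\max_{\dim U=i}\min_{0\ne z\in U}\frac{(\widetilde Sz)^TM(\widetilde Sz)}{(\widetilde Sz)^T(\widetilde Sz)}$, where the second equality uses $\widetilde S^T\widetilde S=I$, and comparing the induced $i$-dimensional subspaces $\widetilde SU\subseteq\R^n$ against the full min-max for $M$ yields the two one-sided bounds.

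The only genuinely delicate point is the first reduction: the quotient matrix $B$ is not symmetric in general, so interlacing cannot be applied to it directly, and one must first pass to the similar symmetric matrix $\widehat B$. Once that is in place, the remainder is the standard orthonormal-compression interlacing argument, which specializes to the ordinary Cauchy interlacing theorem (already invoked in the proof of Lemma~\ref{lemma:induced subgraph}) in the case where the columns of $\widetilde S$ are standard basis vectors.
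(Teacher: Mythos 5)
Your proposal is correct, and it is essentially the proof of the cited source: the paper itself states this lemma without proof, attributing it to Haemers \cite{H1995}, where the argument is exactly yours --- normalize the characteristic matrix to $\widetilde S=S(S^TS)^{-1/2}$, observe that $B$ is similar to the symmetric compression $\widetilde S^TM\widetilde S$, and apply Courant--Fischer/Poincar\'e separation. Your handling of the one delicate point (that $B$ is not symmetric, so one must pass to the similar matrix $\widehat B$ before interlacing) matches the standard treatment, so there is nothing to correct.
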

 The proof of the next result is analogous to the proof of Lemma \ref{lemma:induced subgraph} using the previous result.
\begin{proposition}
\label{lemma:quotient}
If $G$ has a partition of the vertices  with quotient matrix $B$ of $A(G)$, then $s^+(G) \geq s^+(B)$ and $s^-(G) \geq s^-(B)$. 
\end{proposition}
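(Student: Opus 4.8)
The plan is to mimic the proof of Lemma~\ref{lemma:induced subgraph}, replacing the Interlacing Theorem for induced subgraphs with Lemma~\ref{cor:2.3Haemerspaper}, the interlacing statement for quotient matrices. Let $G$ have order $n$ and let $B$ be the $p\times p$ quotient matrix of $A(G)$ associated to the given vertex partition $X=(X_1,\dots,X_p)$. Denote the eigenvalues of $A(G)$ by $\lambda_1\ge\cdots\ge\lambda_n$ and those of $B$ by $\mu_1\ge\cdots\ge\mu_p$. The key input is that $B$, although not necessarily symmetric, has real eigenvalues that interlace those of $A(G)$: by Lemma~\ref{cor:2.3Haemerspaper} we have $\lambda_i\ge\mu_i\ge\lambda_{n-p+i}$ for $i=1,\dots,p$.

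First I would record the immediate consequence of this interlacing for the signs of the eigenvalues: since $\lambda_i\ge\mu_i$, any positive eigenvalue $\mu_i>0$ of $B$ forces $\lambda_i>0$, so $\pi_B\le\pi_G$; symmetrically, $\mu_i\ge\lambda_{n-p+i}$ shows each negative eigenvalue of $B$ is dominated by a negative eigenvalue of $A(G)$, giving $\nu_B\le\nu_G$. Next, for the positive part I would use $\lambda_i\ge\mu_i>0$ on the range $i=1,\dots,\pi_B$ to get $\lambda_i^2\ge\mu_i^2$, and sum:
\[
s^+(G)=\sum_{i=1}^{\pi_G}\lambda_i^2\ge\sum_{i=1}^{\pi_B}\lambda_i^2\ge\sum_{i=1}^{\pi_B}\mu_i^2=s^+(B).
\]
For the negative part I would apply $0>\mu_i\ge\lambda_{n-p+i}$ on the indices corresponding to the negative eigenvalues of $B$, so that squaring reverses the inequality to $\lambda_{n-p+i}^2\ge\mu_i^2$, and then sum these over the negative block to conclude $s^-(G)\ge s^-(B)$.

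The one point requiring care---and the main thing to get right rather than a genuine obstacle---is the indexing for the negative eigenvalues: the interlacing bound that is useful there is the lower bound $\mu_i\ge\lambda_{n-p+i}$, and I must match the negative eigenvalues of $B$ (the largest-index $\mu_i$) with the corresponding tail eigenvalues $\lambda_{n-p+i}$ of $A(G)$, checking that these $\lambda_{n-p+i}$ are indeed among the negative eigenvalues of $G$ so that they are counted in $s^-(G)$. Once the bookkeeping of which interlacing inequality to square is handled correctly on each block, the two displayed chains of inequalities give the claim directly; no further estimates are needed.
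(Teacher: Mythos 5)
Your proof is correct and follows exactly the route the paper intends: the paper's proof of Proposition~\ref{lemma:quotient} is stated only as ``analogous to the proof of Lemma~\ref{lemma:induced subgraph} using the previous result,'' which is precisely the substitution of Lemma~\ref{cor:2.3Haemerspaper} for the Interlacing Theorem that you carry out. Your careful handling of the index shift $\lambda_{n-p+i}$ on the negative block supplies the bookkeeping the paper leaves implicit.
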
}

 {We can give a simple application of Proposition \ref{lemma:quotient} for a graph with an edge cut. 

\begin{lemma}\label{l:partition-quotient}
    Let $G$ be a graph on $n$ vertices and let $S$ be a subset of $ V(G)$ of order $s$. Let $c$ be the number of edges that are incident to exactly one vertex of $S$. Let $d_1$ be the average degree of the subgraph of $G$ induced by $S$ and $d_2$ be the  average degree of the subgraph of $G$ induced by ${V(G)} \setminus S$. If $d_1d_2 - \frac{c^2}{(n-s)s} \geq 0$, then
    \[ s^+(G) \geq   d_1^2 + d_2^2 + \frac{{2}c^2}{s(n-s) }.
    \]
If $d_1d_2 - \frac{c^2}{(n-s)s} < 0$, then
   {\scriptsize \[
    s^+(G) \geq \lambda_1(G)^2\ge  \frac{1}{4} \left(d_1 + d_2 + \sqrt{{ (d_1 -d_2)^2 +}\frac{4c^2}{s(n-s) }} \right)^2
     \text{ and  }
  s^-(G) \geq \lambda_n(G)^2\ge  \frac{1}{4} \left(d_1 + d_2 { -} \sqrt{{ (d_1 -d_2)^2+} \frac{4c^2}{s(n-s) }} \right)^2 .
    \]}
\end{lemma}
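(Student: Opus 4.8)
The plan is to apply Proposition~\ref{lemma:quotient} to the specific vertex partition $X=(S,V(G)\setminus S)$, compute the $2\times 2$ quotient matrix $B$ of $A(G)$ explicitly, and then extract the eigenvalues of $B$ in closed form. First I would record the entries of $B$. Since $d_1$ is the average degree inside $S$, the sum of all entries of $A(G)[S\mid S]$ is $s\,d_1$, so the average row sum is $b_{11}=d_1$; similarly $b_{22}=d_2$. The $c$ edges across the cut contribute $c$ to the total number of $1$'s in the off-diagonal block $A(G)[S\mid V\setminus S]$, so the average row sum over the $s$ rows is $b_{12}=c/s$, and symmetrically $b_{21}=c/(n-s)$. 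Thus
\[
B=\begin{bmatrix} d_1 & c/s \\ c/(n-s) & d_2 \end{bmatrix}.
\]
Note $B$ is \emph{not} symmetric, but it is diagonalizable with real eigenvalues because it is similar to a symmetric matrix (conjugate by $\operatorname{diag}(\sqrt{s},\sqrt{n-s})$, which symmetrizes the off-diagonal entries to the common value $c/\sqrt{s(n-s)}$); this justifies that $s^+(B),s^-(B)$ are well defined and that the interlacing of Lemma~\ref{cor:2.3Haemerspaper} applies.

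Next I would compute the two eigenvalues of $B$ from its trace $d_1+d_2$ and determinant $d_1d_2-\frac{c^2}{s(n-s)}$, giving
\[
\lambda_{\pm}(B)=\tfrac12\!\left(d_1+d_2\pm\sqrt{(d_1-d_2)^2+\tfrac{4c^2}{s(n-s)}}\right).
\]
The sign of the determinant $d_1d_2-\frac{c^2}{s(n-s)}$ controls the inertia of $B$ and hence which of the two claimed bounds we are in. In the case $d_1d_2-\frac{c^2}{s(n-s)}\ge 0$, both eigenvalues of $B$ have the same sign (the product is nonnegative); since the trace $d_1+d_2\ge 0$, both are nonnegative, so $B$ has no negative eigenvalue and $s^+(B)=\lambda_+^2+\lambda_-^2$. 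The slickest way to finish is to use $s^+(B)=\tr(B^2)-s^-(B)=\tr(B^2)$ (as $s^-(B)=0$ here), and to compute $\tr(B^2)=\lambda_+^2+\lambda_-^2=(d_1+d_2)^2-2\det B = d_1^2+d_2^2+\frac{2c^2}{s(n-s)}$; combining with $s^+(G)\ge s^+(B)$ from Proposition~\ref{lemma:quotient} yields the first displayed bound exactly.

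For the second case $d_1d_2-\frac{c^2}{s(n-s)}<0$, the determinant of $B$ is negative, so $\lambda_+>0>\lambda_-$ and $B$ has exactly one positive and one negative eigenvalue. Then $s^+(B)=\lambda_+^2$ and $s^-(B)=\lambda_-^2$, and Proposition~\ref{lemma:quotient} gives $s^+(G)\ge\lambda_+^2$ and $s^-(G)\ge\lambda_-^2$; substituting the closed-form $\lambda_\pm$ above produces the two stated inequalities. I would also insert the chain $s^+(G)\ge\lambda_1(G)^2\ge\lambda_+(B)^2$, which holds because $\lambda_+(B)$ is the largest eigenvalue of the quotient and interlacing gives $\lambda_1(G)\ge\lambda_+(B)\ge 0$ (and symmetrically $\lambda_n(G)\le\lambda_-(B)\le 0$ for the negative side). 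The main obstacle, and the only genuinely nonroutine point, is the non-symmetry of $B$: I must be careful that Lemma~\ref{cor:2.3Haemerspaper} and the very notions $s^+(B),s^-(B)$ make sense for it, which is why the symmetrization by $\operatorname{diag}(\sqrt{s},\sqrt{n-s})$ is the load-bearing step. Everything else is a two-by-two trace/determinant computation split according to the sign of $\det B$.
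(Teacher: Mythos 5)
Your proposal is correct and follows essentially the same route as the paper: partition into $S$ and $V(G)\setminus S$, form the $2\times 2$ quotient matrix $B=\begin{bmatrix} d_1 & c/s \\ c/(n-s) & d_2 \end{bmatrix}$, apply Proposition~\ref{lemma:quotient}, and split on the sign of $\det B$, with your $\tr(B^2)=(d_1+d_2)^2-2\det B$ computation being just a tidier version of the paper's direct expansion of $\lambda_+^2+\lambda_-^2$. Your extra care about the non-symmetry of $B$ (symmetrizing via $\operatorname{diag}(\sqrt{s},\sqrt{n-s})$) is a legitimate point the paper leaves implicit, though Lemma~\ref{cor:2.3Haemerspaper} as stated already only requires the \emph{original} matrix $M=A(G)$ to be symmetric, so no additional argument is strictly needed.
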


\begin{proof} We consider the partition of $V(G)$ into $S$ and $V\setminus S$. The quotient matrix of $A(G)$ with respect to this partition is 
\[
B = \mtx{d_1 & \frac{c}{s} \\ \frac{c}{n-s} & d_2}.
\]
We can find the characteristic polynomial of $B$ in variable $t$  as follows:
\[
|tI_2 - B| = \begin{vmatrix} t - d_1 & -\frac{c}{s} \\ -\frac{c}{n-s} & t- d_2 \end{vmatrix} = (t - d_1)(t- d_2) - \frac{c^2}{s(n-s)}
= t^2 - (d_1 + d_2) t + d_1d_2 - \frac{c^2}{s(n-s)}.
\]
Thus the eigenvalues of $B$ are 
\[
\lambda_{\pm} = \frac{d_1 + d_2 \pm \sqrt{(d_1 + d_2) ^2 - 4d_1d_2  + \frac{4c^2}{s(n-s)} } }{2} = \frac{d_1 + d_2 \pm \sqrt{ (d_1 -d_2)^2 +\frac{4c^2}{s(n-s)} } }{2}.
\]
{
If the determinant $d_1d_2 - \frac{c^2}{(n-s)s} \geq 0$ then  $\lambda_{-} > 0$ and $s^+(G) \geq \lambda_{+}^2 + \lambda_{-}^2$ and thus 
\bea
   s^+(G) &\geq &\frac{1}{4} \left(d_1 + d_2 + \sqrt{ (d_1 -d_2)^2 +\frac{4c^2}{s(n-s) }} \right)^2 
 + \frac{1}{4} \left(d_1 + d_2 -\sqrt{ (d_1 -d_2)^2+\frac{4c^2}{s(n-s) } } \right)^2 \\
 &=&  \frac{1}{2} \left(d_1 + d_2 \right)^2 + \frac{1}{2} (d_1 -d_2)^2+ \frac{2c^2}{s(n-s)}  \\
 &=& d_1^2 + d_2^2 + \frac{2c^2}{s(n-s). }
\eea
 Otherwise, $\lambda_{-}< 0$ and we obtain 
$s^+(G) \geq  \lambda_1(G)^2\ge  \lambda_{+}^2$ and $s^-(G) \geq  \lambda_n(G)^2\ge  \lambda_{-}^2$ from Proposition \ref{lemma:quotient}.}
\end{proof}

 Lemma \ref{l:partition-quotient} will be applied in Section \ref{subsec:unicyclic} to establish the conjecture for a family of unicyclic graphs.  Here we apply Lemma \ref{l:partition-quotient} to conclude that a join of a graph $H$ with itself  satisfies the conjecture provided $H$ does not have too high density.
\begin{proposition}
     Suppose $H$ is a graph of order $r\ge 8$ with average degree
$d \le \frac r 2$.  Then $G=H\vee H$ satisfies Conjecture \ref{conj:energies}.
\end{proposition}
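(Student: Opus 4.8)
The plan is to prove the two inequalities $s^+(G) \geq n-1$ and $s^-(G) \geq n-1$ separately, where $n = |V(G)| = 2r$. The positive bound is immediate: since $G = H \vee H$ is the join of two nonempty disjoint graphs, Lemma \ref{lema:join} gives $s^+(G) \geq n-1$ directly, and the density hypothesis plays no role here.

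The substance lies in the negative bound, and here I would invoke Lemma \ref{l:partition-quotient} with the partition of $V(G)$ into the two copies of $H$. Taking $S$ to be one copy gives $s = r$ and $n - s = r$; because the join makes every vertex of one copy adjacent to every vertex of the other, the edge cut has $c = r^2$ edges; and since each side induces a copy of $H$, we have $d_1 = d_2 = d$. The resulting quotient matrix is $\mtx{d & r \\ r & d}$, with eigenvalues $d \pm r$. Since $d \le r/2 < r$, the quantity $d_1 d_2 - \frac{c^2}{(n-s)s} = d^2 - r^2$ is negative, so we land in the second case of Lemma \ref{l:partition-quotient}, which yields $s^-(G) \ge \lambda_-^2 = (r-d)^2$.

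It then remains to check the numerical inequality $(r-d)^2 \ge 2r - 1$. The hypothesis $d \le r/2$ gives $r - d \ge r/2$, hence $(r-d)^2 \ge r^2/4$, and $r^2/4 \ge 2r - 1$ is equivalent to $r^2 - 8r + 4 \ge 0$; the larger root of this quadratic is $4 + 2\sqrt{3} < 8$, so the inequality holds for every $r \ge 8$. Combining the two parts finishes the proof.

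The computation is routine once the natural bipartition is chosen, so I do not expect a genuine obstacle. The only delicate point is recognizing why both hypotheses are exactly calibrated: the density bound $d \le r/2$ keeps $r-d$ large enough that $(r-d)^2 \ge r^2/4$, while $r \ge 8$ is the least integer for which $r^2/4 \ge 2r - 1$. Weakening either assumption could push $(r-d)^2$ below $n-1$, so both are consumed together in the single final estimate rather than used independently.
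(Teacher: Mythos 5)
Your proposal is correct and follows essentially the same route as the paper: the positive bound via the join/$K_{r,r}$ observation (your citation of Lemma \ref{lema:join} is just the packaged form of the paper's direct $K_{r,r}$ subgraph argument), and the negative bound via Lemma \ref{l:partition-quotient} applied to the partition into the two copies of $H$, yielding $s^-(G)\ge (r-d)^2\ge r^2/4\ge 2r-1$. The computations match the paper's exactly, including the roles of the hypotheses $d\le r/2$ and $r\ge 8$.
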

\bpf Note that the order $n$ of $G$ is $2r$.  Since $G$ has a $K_{r,r}$ subgraph, $s^+(G)\ge n-1$.  Partition the vertices of $G$ into the vertices of the two copies of $H$.  Then $d_1=d_2=d, s=n-s=r,$ and $c=r^2$.  So $d_1d_2 - \frac{c^2}{(n-s)s} \le \frac{r^2}4-r^2< 0$ and by Lemma \ref{l:partition-quotient},
\[s^-(G) \geq \frac{1}{4} \left(2d { -} \sqrt{ 4r^2} \right)^2\ge \lp \frac r 2-r \rp^2=\frac{r^2}4\ge 2r-1=n-1,
    \]
where the last inequality is holds because  $r\ge 8$.
\epf
Note that $d_1d_2 - \frac{c^2}{(n-s)s}<0$ without the assumption that $d\le \frac r 2$, since $d\le r-1$ for any graph of order $r$.

\subsection{Equitable partitions and twins}\label{sec:eqot-twins}
{Let $M=[m_{ij}]$ be an $n\x n$ matrix.  The partition $X=(X_1,\dots,  X_p)$  of $ \{1,\dots,n\}$ is {\em equitable}} for   $M$ if for every pair $i,j\in\{1,\dots,p\}$, the row sums of $M_{ij}$ are constant.

  {The  material  in this section will be applied in Section \ref{sec:extendedbarbell}. It} is adapted from \cite{HR21}, where analogous results for distance matrices are presented; the results there could also be adapted to show similar results for the Laplacian, signless Laplacian, and normalized Laplacian matrices of a graph.

\begin{lemma}\label{t:quot}
Let $M$ be a symmetric $n\x n$  matrix, let $X$ be an equitable partition of {$\{1,\dots,n\}$},  let $B$ be the quotient matrix  of $M$ for  $X$, and let $\bw,\by,\bz\in \R^p$.
\ben[$(1)$]
\item\label{l:quot-0} {\rm \cite[p. 24]{BH}} $MS=SB$.
\item\label{l:quot-1}{\rm \cite{HR21}} If $i\in X_j$, then $(S\bw)_i={w}_j$ where $(S\bw)_i$ denotes the $i$th coordinate of $S\bw$ and $w_j$ denotes the $j$th coordinate of $\bw$.
\item\label{l:quot-2}{\rm \cite{HR21}} If $S\bw=S\by$, then $\bw=\by$.
\item\label{l:quot-3}{\rm \cite{HR21}} If $S\bz$ is an eigenvector of $M$, then  $\bz$ is an eigenvector of $B$ for the same eigenvalue.
\item\label{l:quot-4} {\rm \cite[Lemmas 2.3.1]{BH}} If $\bz$ is an eigenvector of $B$, then  $S\bz$ is an eigenvector of $M$ for the same eigenvalue. 
\een\end{lemma}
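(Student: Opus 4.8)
The plan is to reduce all five parts to two basic facts: the intertwining identity $MS = SB$ from part $(1)$, and the injectivity of $S$ as a linear map $\R^p \to \R^n$. Once these are in place, parts $(3)$--$(5)$ become purely formal manipulations, so the bulk of the work is front-loaded into $(1)$ and an observation about the columns of $S$.

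First I would verify $(1)$ by an entrywise comparison. Fix a row index $i$, say $i \in X_\ell$, and a part index $k$. The $(i,k)$ entry of $MS$ is $\sum_{t \in X_k} m_{it}$, which is exactly the sum of row $i$ restricted to the block $M_{\ell k}$; by the definition of an equitable partition this row sum is constant over all $i \in X_\ell$ and therefore equals the common value $b_{\ell k}$. On the other side, $(SB)_{ik} = \sum_m s_{im} b_{mk} = b_{\ell k}$, because $i$ lies in exactly one part and so exactly one $s_{im}$ is nonzero. Hence $MS = SB$. Part $(2)$ is immediate from the shape of $S$: since $i \in X_j$ forces $s_{im} = \delta_{mj}$, we get $(S\bw)_i = \sum_m s_{im} w_m = w_j$.

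Next I would record that $S$ is injective. Its columns are the indicator vectors of the nonempty, pairwise disjoint parts $X_1, \dots, X_p$, hence linearly independent, so $S$ has full column rank $p$; this is where I use that a partition has nonempty blocks. Part $(3)$ is then immediate, either from full column rank directly or by reading off coordinate $i \in X_j$ via $(2)$ to conclude $w_j = y_j$ for every $j$.

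Finally, for $(4)$ and $(5)$ I would combine $(1)$ with injectivity. If $S\bz$ is an eigenvector of $M$ with eigenvalue $\lambda$, then $S(B\bz) = (SB)\bz = (MS)\bz = M(S\bz) = \lambda S\bz = S(\lambda\bz)$, and injectivity gives $B\bz = \lambda\bz$; moreover $S\bz \neq \bzero$ together with injectivity forces $\bz \neq \bzero$, so $\bz$ is a genuine eigenvector, proving $(4)$. Symmetrically, if $B\bz = \lambda\bz$ with $\bz \neq \bzero$, then $M(S\bz) = (MS)\bz = (SB)\bz = \lambda S\bz$, and injectivity guarantees $S\bz \neq \bzero$, giving $(5)$. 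I do not anticipate a real obstacle here, since each part is a short linear-algebra argument; the only points requiring care are ensuring the parts are nonempty so that $S$ is injective, and tracking the nonvanishing of $\bz$ and $S\bz$ so that the conclusions genuinely produce eigenvectors rather than merely the relations $B\bz = \lambda\bz$ and $M(S\bz) = \lambda S\bz$ for a possibly zero vector.
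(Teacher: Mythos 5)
Your proof is correct. Note that the paper itself gives no proof of this lemma---each item carries a citation to \cite{BH} or \cite{HR21}---and your argument is essentially the standard one found in those references: establish the intertwining identity $MS=SB$ entrywise from equitability, observe that $S$ has full column rank because the parts are nonempty and disjoint, and then derive items $(3)$--$(5)$ formally from these two facts, with the correct care taken that $\bz\neq\bzero$ and $S\bz\neq\bzero$ so that the conclusions produce genuine eigenvectors.
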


 Let $v_1, v_2$ be vertices of a graph $G$ of order at least three  that  have the same neighbors other than $v_1$ and $v_2$.
  If $N(v_1)=N(v_2)$ (so $v_1$ and $v_2$ are not adjacent), then $v_1$ and $v_2$ are called {\em independent twins}. If $N[v_1]=N[v_2]$ (so $v_1$ and $v_2$ are adjacent), then $v_1$ and $v_2$ are   called {\em adjacent twins}. Both cases are referred to as {\em twins}.  Note that twins have the same degree.  Observe that if  $v_k$ and $v_{k+i}$ are twins for $i=1,\dots,r-1$, then for $i\ne j\in\{1,\dots,r-1\}$,  $v_{k+i}$ and $v_{k+j}$ are twins of the same type as $v_k$ and $v_{k+i}$, because   $N(v_{k+i})=N(v_k)=N(v_{k+j})$ for independent twins and $N[v_{k+i}]=N[v_k]=N[v_{k+j}]$ for adjacent twins.

 It is useful to partition the vertices with one or more partition sets consisting of twins  and to use the partition to create block matrices, as in the proofs of Proposition \ref{t:twin} and Theorem \ref{t:twin-quot}.  
We make no claim that the next proposition is new but include the brief proof for completeness.
\begin{proposition}\label{t:twin} Let $G$ be a graph of order at least three 
 and suppose that $v_k$ and $v_{k+i}$ are twins for $i=1,\dots,r-1$.  {For $i=1,\dots,r-1$, let $\bz_i=[0,\dots,1 , 0,\dots,0,-1 ,0 , \dots , 0]^T$ be the vector where the $k$th coordinate is $1$ and the $k+i$th coordinate is $-1$. Then for $i=1,\dots,r-1$, $\bz_i$ is an eigenvector for {$A(G)$} for eigenvalue  $\alpha=0$ if $v_k$ and $v_{k+i}$ are independent or   $\alpha=-1$ if $v_k$ and $v_{k+i}$ are adjacent.  Thus eigenvalue} $\alpha$ has multiplicity at least $r-1$.
 \end{proposition}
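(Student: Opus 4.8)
The plan is to verify by direct computation that each $\bz_i$ is an eigenvector of $A(G)$ for the stated eigenvalue $\alpha$, and then to check that $\bz_1,\dots,\bz_{r-1}$ are linearly independent, which forces $\mult(\alpha)\ge r-1$ since $A(G)$ is symmetric. Writing $\Ze_j$ for the $j$th standard basis vector, the key observation is that $\bz_i=\Ze_k-\Ze_{k+i}$ has nonzero entries only in coordinates $k$ and $k+i$, so for every vertex $j$ we have the single clean expression $(A(G)\bz_i)_j = A_{jk}-A_{j,k+i}$, and the whole argument reduces to evaluating this in three groups of coordinates.

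First I would handle the coordinates $j\notin\{k,k+i\}$. Here the twin hypothesis is exactly what is needed: $v_k$ and $v_{k+i}$ have the same neighbors outside $\{v_k,v_{k+i}\}$, so $A_{jk}=A_{j,k+i}$ and thus $(A(G)\bz_i)_j=0$. This agrees with $\alpha(\bz_i)_j=\alpha\cdot 0$ regardless of which value $\alpha$ takes, so these coordinates impose no constraint and never distinguish the two cases.

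The only coordinates requiring care are $j=k$ and $j=k+i$, and this is precisely where the two cases split. Using $A_{kk}=A_{k+i,k+i}=0$ one gets $(A(G)\bz_i)_k=-A_{k,k+i}$ and $(A(G)\bz_i)_{k+i}=A_{k,k+i}$. For independent twins $v_k\not\sim v_{k+i}$, so $A_{k,k+i}=0$ and both coordinates vanish, confirming $A(G)\bz_i=0\cdot\bz_i$; for adjacent twins $v_k\sim v_{k+i}$, so $A_{k,k+i}=1$, giving $(A(G)\bz_i)_k=-1=(-1)(\bz_i)_k$ and $(A(G)\bz_i)_{k+i}=1=(-1)(\bz_i)_{k+i}$, confirming $A(G)\bz_i=(-1)\bz_i$. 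Finally, as noted just before the statement, all the pairs $v_k,v_{k+i}$ are twins of the same type, so every $\bz_i$ is an eigenvector for one common eigenvalue $\alpha$; linear independence is then immediate, since a relation $\sum_i c_i\bz_i=\bzero$ forces the coefficient $-c_i$ of $\Ze_{k+i}$ to vanish for each $i$. The computation is entirely elementary, and the only point demanding attention is the correct bookkeeping of the two special coordinates $k$ and $k+i$, which is where the adjacency between the twins—and hence the value of $\alpha$—enters.
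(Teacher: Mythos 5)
Your proof is correct and follows essentially the same route as the paper's: both are direct verifications of the eigenvector equation, resting on the fact that columns $k$ and $k+i$ of $A(G)$ agree outside rows $k$ and $k+i$ (the paper phrases this via a block partition with $A_{2,1}=\mtx{\bv & \bv}$, you phrase it entry-wise as $A_{jk}=A_{j,k+i}$), together with the $2\times 2$ case analysis on whether the twins are adjacent. Your explicit check of linear independence of the $\bz_i$ is a small point the paper leaves implicit, but it does not change the nature of the argument.
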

 \bpf
 {We show that   $\bw=\bz_1$} is an eigenvector for eigenvalue $\alpha=0$ or $\alpha=-1$ of $A=A(G)$ where $v_1$ and $v_{2}$ are independent or  adjacent twins (the argument is the same for $v_k$ and $v_{k+i}$ but the notation is messier).  

Suppose $v_1$ and $v_{2}$ are independent twins.   Apply the  partition $\{1,2\}, \{3,\dots,n\}$ to {$A$} and $\bw$ to define block matrices and multiply:\vspace{-5pt}
\[A\bw=\mtx{A_{1,1}& A_{2,1}^T\\A_{2,1}&A_{2,2}}\mtx{\bw_1\\ \bw_2}=\mtx{A_{1,1}\bw_1+{A_{2,1}^T} \bw_2\\A_{2,1}\bw_1+A_{2,2} \bw_2}\vspace{-5pt}\]
Since $A_{1,1}=\mtx{0 & 0 \\0 & 0}$, $A_{2,1}=\mtx{\bv & \bv}$ for some vector $\bv$,  $\bw_1=\mtx{1\\-1}$, and $\bw_2=\bzero_{n-2}$,  \vspace{-5pt}
\[A_{1,1}\bw_1+{A_{2,1}^T} \bw_2=\bzero_2+\bzero_2=0\bw_1\mbox{ and }
A_{2,1}\bw_1+A_{2,2} \bw_2=\bzero_{n-2}+\bzero_{n-2}=0\bw_2.\vspace{-5pt} \]
Thus $A\bw={0}\bw$.

Suppose $v_1$ and $v_{2}$ are adjacent twins.   Apply the partition $\{1,2\}, \{3,\dots,n\}$ to $A=A(G)$ and $\bw$ to define block matrices and multiply:\vspace{-5pt}
\[A\bw=\mtx{A_{1,1}& A_{2,1}^T\\A_{2,1}&A_{2,2}}\mtx{\bw_1\\ \bw_2}=\mtx{A_{1,1}\bw_1+{ A_{2,1}^T} \bw_2\\A_{2,1}\bw_1+A_{2,2} \bw_2}\vspace{-5pt}\]
Since $A_{1,1}=\mtx{0 & 1 \\1 & 0}$, $A_{2,1}=\mtx{\bv & \bv}$ for some vector $\bv$,  $\bw_1=\mtx{1\\-1}$, and $\bw_2=\bzero_{n-2}$,  \[A_{1,1}\bw_1+{ A_{2,1}^T} \bw_2=\mtx{-1\\1}+\bzero_2=(-1)\bw_1 \mbox{ and} 
A_{2,1}\bw_1+A_{2,2} \bw_2=\bzero_{n-2}+\bzero_{n-2}=(-1)\bw_2. \]
Thus $A\bw=(-1)\bw$.
\epf

Sets of twins in a graph naturally provide an equitable partition of  {the adjacency matrix.} 
Proposition  \ref{t:twin} and Lemma \ref{t:quot} can be combined to determine the spectrum.

\begin{theorem}\label{t:twin-quot} Let {$G$ be a graph with $V(G)=\{1,\dots,n\}$, let $X=(X_1,\dots,X_p)$  be a partition of the vertices of $G$  with   $|X_1|\le \dots\le |X_p|$, and} let $k$ be the least index such that $|X_k|\ge 2$.  Suppose that $v,u\in X_j$ implies $v=u$ or $v$ and $u$ are twins.  For $j=k,\dots,p$, let  $\alpha_j$ denote the the eigenvalue $\alpha$ specified in Proposition \ref{t:twin} for  the type of twin in $X_j$. Let $B$ denote the quotient matrix of $ A(G)$ for $X$.  Then\vspace{-5pt}  \[\spec(A(G))=\{\alpha_k^{(n_k-1)},\dots,\alpha_p^{(n_p-1)}\}\cup\spec(B)\vspace{-5pt}\] (as multisets).
\end{theorem}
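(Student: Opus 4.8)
The plan is to split $\R^n$ into two subspaces that are invariant under $A=A(G)$ and whose spectra produce the two halves of the claimed identity: the orthogonal complement of the column space of the characteristic matrix $S$, which will carry the twin eigenvalues, and the column space $\operatorname{col}(S)$ itself, which will carry $\spec(B)$. Throughout write $n_j=|X_j|$, and note that the partition $X$ is equitable for $A$, since every vertex of a twin class has identical adjacencies to all other vertices and hence the same number of neighbors in each block (while singleton blocks are trivially equitable), so that Lemma \ref{t:quot} applies.

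First I would assemble the twin eigenvectors. For each $j\ge k$, Proposition \ref{t:twin} supplies $n_j-1$ eigenvectors of $A$ for the eigenvalue $\alpha_j$, each supported on $X_j$ with coordinate sum zero. Vectors supported on different blocks are orthogonal, and within a fixed block the $n_j-1$ difference vectors are linearly independent, so together they form a linearly independent set of size $\sum_{j=k}^p(n_j-1)=n-p$. Because each such vector is orthogonal to every block indicator, they all lie in $\operatorname{col}(S)^\perp$, and since $\dim\operatorname{col}(S)^\perp=n-p$ they form a basis of it. Hence $\operatorname{col}(S)^\perp$ is $A$-invariant and the spectrum of $A$ restricted to it is exactly $\{\alpha_k^{(n_k-1)},\dots,\alpha_p^{(n_p-1)}\}$.

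Next I would treat $\operatorname{col}(S)$. The equitable intertwining relation $AS=SB$ from Lemma \ref{t:quot} gives $A(S\bz)=S(B\bz)\in\operatorname{col}(S)$ for all $\bz\in\R^p$, so $\operatorname{col}(S)$ is $A$-invariant; and since $S$ is injective (Lemma \ref{t:quot}), the map $\bz\mapsto S\bz$ is an isomorphism $\R^p\to\operatorname{col}(S)$ that conjugates $B$ to $A|_{\operatorname{col}(S)}$. The two eigenvector correspondences in Lemma \ref{t:quot} then show that $A|_{\operatorname{col}(S)}$ and $B$ have identical spectra. Finally, because $A$ is symmetric, $\R^n=\operatorname{col}(S)\oplus\operatorname{col}(S)^\perp$ is an orthogonal direct sum of $A$-invariant subspaces, so $\spec(A)$ is the multiset union of the two restricted spectra, which is the asserted equality.

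The step I expect to need the most care is the dimension bookkeeping that glues the two pieces together: verifying that the $n-p$ twin eigenvectors exhaust $\operatorname{col}(S)^\perp$ (rather than merely sitting inside it), so that none of its eigenvalues are omitted, and observing that when some $\alpha_j$ happens to coincide with an eigenvalue of $B$ the multiplicities add correctly. Both are handled automatically by the orthogonal direct-sum decomposition, once the count $\sum_{j=k}^p(n_j-1)+p=n$ is confirmed, since multiplicities are additive over invariant summands.
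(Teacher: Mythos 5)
Your proof is correct, but it completes the argument by a genuinely different mechanism than the paper. The two proofs share the same starting ingredients: the twin difference vectors from Proposition \ref{t:twin}, and Lemma \ref{t:quot} applied to the (equitable) twin partition --- and you rightly pause to note equitability, which the paper establishes only in the surrounding text. Where they diverge is the completeness step. The paper extends the $n-p$ twin eigenvectors to a full eigenbasis of $A(G)$ (using symmetry), then corrects each added eigenvector by subtracting its projection onto the relevant twin eigenspaces; the corrected vectors are constant on the blocks, hence of the form $S\bz$, and descend via Lemma \ref{t:quot} to an eigenbasis of $B$, so that $\spec(B)$ supplies exactly the remaining eigenvalues. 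You instead decompose $\R^n = \operatorname{col}(S) \oplus \operatorname{col}(S)^{\perp}$ into $A$-invariant subspaces: the dimension count $\sum_j (n_j-1) = n-p$ identifies $\operatorname{col}(S)^{\perp}$ as precisely the span of the twin eigenvectors, while $AS=SB$ together with injectivity of $S$ (Lemma \ref{t:quot}, parts 1 and 2) exhibits $A|_{\operatorname{col}(S)}$ as similar to $B$; additivity of characteristic polynomials over a direct sum of invariant subspaces then yields the multiset equality. Your route buys two things: the multiplicity bookkeeping is automatic (the paper needs the projection correction exactly to handle the case where some extension eigenvalue coincides with an $\alpha_j$), and you never need to argue that $B$ admits an eigenbasis, since similarity to the restriction already equates characteristic polynomials. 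What the paper's route buys in exchange is an explicit eigenbasis of $B$, lifted from and orthogonal to the twin eigenvectors, which can be useful computationally. Both arguments are sound; yours is the tidier of the two.
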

\bpf
{Let $n_i=|X_i|$ for $i=1,\dots,p$.} Apply Proposition  \ref{t:twin} to construct $n_j-1$ eigenvectors for $\alpha_j$, $j=k,\dots,p$ and denote this entire collection of  eigenvectors by  $\bw_1,\dots,\bw_{n-p}$; let $W_j$ denote the span of the subset of these vectors that are associated with $\alpha_j$.  It is immediate that $\{\alpha_k^{(n_k-1)},\dots,\alpha_p^{(n_p-1)}\}\subset\spec(A(G))$ (as multisets). By Lemma \ref{t:quot}, every eigenvector $\bz$ of $B$ for eigenvalue $\alpha$ yields an eigenvector $S\bz$ of $A(G)$ for $\alpha$.  Furthermore, $S\bz$ is orthogonal to (and thus independent of) $\bw_1,\dots,\bw_{n-m}$. Hence it suffices to show that $B$ has a basis of eigenvectors.

 Extend $\{\bw_1,\dots,\bw_{n-m}\}$ to a basis of eigenvectors \[\{\bw_1,\dots,\bw_{n-m},\bw_{n-m+1},\dots,\bw_{n}\}\] of {$A(G)$} (a basis of eigenvectors exists because $A(G)$ is symmetric). Consider $\bw_h$ with $h>n-m$.  If the associated eigenvalue $\alpha_h$ of $M$ is distinct from $\alpha_j$, then $\bw_h$ is orthogonal to the eigenvectors for $\alpha_j$.  If $\alpha_h=\alpha_j$, then let $\bw'_h=\bw_h-\proj_{W_j}(\bw_h)$ (this step can be applied more than once if needed).  Then $\bw'_h$ is an eigenvector for  $\alpha_h$ and is orthogonal to  $\bw_\ell$ for   $\ell=1,\dots,n-p$.  This implies $\bw'_h$ is constant on
the  coordinates in $X_j$ for $j=1,\dots,m$, so $\bw'_h=S\bz_h$ for some $p$-vector $\bz$.  By Lemma \ref{t:quot}, $\bz_h$ is an eigenvector for $B$ for $\alpha_h$. Thus  $B$ has a basis of eigenvectors and $\spec(A(G))=\{\alpha_k^{(n_k-1)},\dots,$ $\alpha_p^{(n_p-1)}\}\cup \spec(B)$  (as multisets). \epf

\section{Results on graph classes}\label{sec:classesresults}

In this section we  use the  tools obtained in the preceding section  and other known results  to establish Conjecture \ref{conj:energies} for several graph classes.

\subsection{Extended barbell graphs}\label{sec:extendedbarbell}

$\null$
{A \emph{barbell graph} is a graph composed of two cliques of the same size connected by an edge.  It was shown in \cite{EFGW16} that Conjecture \ref{conj:energies} is true for barbell graphs.  Here we apply the results of Section \ref{sec:eqot-twins} to establish Conjecture \ref{conj:energies} for extended barbell graphs.} 
An \emph{extended barbell graph} is a graph composed of two cliques of the same size, say of size $k$, connected by a path $P$ of length \textcolor{red}{$2.$}
+We label the vertices of the cliques by $v_1,\ldots, v_{k}$ and $v_{k+1}, \ldots, v_{n-1}$, and the {degree-two} vertex of the path by $v_{n}$, so that ${V(P)} = \{v_{k}, v_{n}, v_{k+1}\}.$ 

\begin{proposition} \label{p:extbarbell}
Let $G$ be an extended barbell graph on $n=2k+1$ vertices for $k \geq 3.$ 
The eigenvalues of $G$ are {$k-1$,  $-1$ with multiplicity {$n-4$},} and the three roots of $ f(x) = x^{3} - (k-2)x^2 - (1+k)x + 2(k-2).$ 
\end{proposition}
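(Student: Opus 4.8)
The plan is to exploit the large sets of twins in $G$ together with the reflection symmetry interchanging the two cliques, and then to apply Theorem \ref{t:twin-quot}. First I would identify the twins: within the first clique the vertices $v_1,\dots,v_{k-1}$ all have closed neighborhood $\{v_1,\dots,v_k\}$, so they are pairwise adjacent twins, while $v_k$ is excluded because it alone is adjacent to $v_n$. Symmetrically $v_{k+2},\dots,v_{2k}$ are adjacent twins in the second clique, with $v_{k+1}$ excluded. By Proposition \ref{t:twin} each block of $k-1$ adjacent twins forces the eigenvalue $-1$ with multiplicity $k-2$, contributing $2(k-2)=2k-4$ copies of $-1$ in total.

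Next I would form the equitable partition
\[
X=(\{v_k\},\{v_{k+1}\},\{v_n\},\{v_1,\dots,v_{k-1}\},\{v_{k+2},\dots,v_{2k}\})
\]
(singletons first, as required by Theorem \ref{t:twin-quot}) and write down the quotient matrix of $A(G)$,
\[
B=\mtx{0 & 0 & 1 & k-1 & 0 \\ 0 & 0 & 1 & 0 & k-1 \\ 1 & 1 & 0 & 0 & 0 \\ 1 & 0 & 0 & k-2 & 0 \\ 0 & 1 & 0 & 0 & k-2}.
\]
By Theorem \ref{t:twin-quot} we then have $\spec(A(G))=\{(-1)^{(k-2)},(-1)^{(k-2)}\}\cup\spec(B)$, so the whole problem reduces to computing the five eigenvalues of $B$.

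The computation of $\spec(B)$ is where I would use the reflection symmetry $\sigma$ that swaps coordinates $1\leftrightarrow 2$ and $4\leftrightarrow 5$ while fixing coordinate $3$. Since $\sigma$ commutes with $B$, its eigenvectors split into $\sigma$-symmetric and $\sigma$-antisymmetric ones, and the characteristic polynomial of $B$ factors accordingly. Writing a symmetric eigenvector as $z_1=z_2=a,\ z_4=z_5=b,\ z_3=c$ and eliminating $a,b,c$ collapses the eigenvalue equations exactly to the cubic $f(x)=x^3-(k-2)x^2-(1+k)x+2(k-2)$, producing its three roots. Writing an antisymmetric eigenvector as $z_1=-z_2=a,\ z_4=-z_5=b,\ z_3=0$ forces $(k-1)b=\mu a$ and $a=(\mu-k+2)b$, hence the quadratic $\mu^2-(k-2)\mu-(k-1)=0$, whose discriminant is $k^2$ and whose roots are therefore $\mu=k-1$ and $\mu=-1$.

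Combining, $\spec(B)$ consists of $k-1$, one further copy of $-1$, and the three roots of $f$; together with the $2k-4$ twin eigenvalues this yields eigenvalue $k-1$, eigenvalue $-1$ with total multiplicity $(2k-4)+1=2k-3=n-4$, and the three roots of $f$, as claimed. The main obstacle is simply the eigenvalue computation of the $5\times5$ matrix $B$; the reflection symmetry is precisely what makes it tractable by cleanly separating the cubic $f$ from the quadratic factor. It is also worth verifying, via $f(k-1)=-2\neq 0$, that the extra $-1$ and the value $k-1$ are genuinely new eigenvalues rather than already being among the roots of $f$, so that the three stated root-eigenvalues, the isolated $k-1$, and the $(n-4)$-fold $-1$ account for all $n$ eigenvalues.
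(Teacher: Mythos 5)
Your proof is correct and follows essentially the same route as the paper: the same sets of adjacent twins, the same five-part equitable partition with the same quotient matrix $B$, and the same application of Theorem \ref{t:twin-quot} to conclude $\spec(A(G))=\{(-1)^{(2k-4)}\}\cup\spec(B)$. The only difference is that you factor the characteristic polynomial of $B$ via the symmetric/antisymmetric decomposition under the clique-swapping reflection, whereas the paper simply asserts the factorization $p_B(x)=f(x)(x-k+1)(x+1)$; your symmetry argument is a clean way of verifying exactly that assertion.
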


\bpf
 Let $G$ be an extended barbell graph and let $X=(X_1,X_2,X_3,X_4,X_5)$ be the partition of $V(G)$ defined by $X_1=\{v_{k}\}, X_2=\{v_{k+1}\}, X_3=\{v_{n}\}, X_4 = \{v_1, \dots, v_{k-1}\},$ and $X_{5} = \{v_{k+2}, \dots, v_{n-1}\}$. 
The partition $X$ is equitable with quotient matrix 
\[ B = \mtx{
   0 & 0 & 1 & k-1 & 0\\
   0 & 0 & 1 & 0   & k-1\\
   1 & 1 & 0 & 0   & 0\\
   1 & 0 & 0 & k-2 & 0\\
   0 & 1 & 0 & 0   & k-2
},
\]
and its characteristic polynomial is given by $p_{B}(x) = f(x) (x-k+1)(x + 1)$
where $ f(x) = x^{3} - (k-2)x^2 - (1+k)x + 2(k-2).$ 
Since the twins are adjacent, $\spec(G)=\{(-1)^{(2k-4)}\}\cup\spec(B)$ by  Theorem \ref{t:twin-quot}. This establishes that the eigenvalues of $G$ are $-1$ with multiplicity $2k-3=n-4$, $k-1$, and the three roots of $ f(x) = x^{3} - (k-2)x^2 - (1+k)x + 2(k-2).$ %
\epf

\begin{theorem}
Let $G$ be an extended barbell graph on $n=2k+1$ vertices for $k \geq 3.$ Then  {$\lambda_2 = {k-1}$, $ \lambda_{4} =\cdots =  \lambda_{n-1} = -1,$}  
$s^{+}(G) \geq n-1$, and $s^{-}(G) \geq n-1.$
\end{theorem}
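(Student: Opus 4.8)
The plan is to leverage Proposition~\ref{p:extbarbell}, which gives the full spectrum of the extended barbell graph $G$ on $n=2k+1$ vertices: the eigenvalue $k-1$, the eigenvalue $-1$ with multiplicity $n-4=2k-3$, and the three roots of the cubic $f(x)=x^3-(k-2)x^2-(1+k)x+2(k-2)$. Since the bound $s^+(G),s^-(G)\ge n-1$ depends only on the signs and sizes of the eigenvalues, the whole argument reduces to analyzing these three roots of $f$ together with the already-known eigenvalues. First I would locate the three roots of $f$ on the real line by evaluating $f$ at a few convenient points (and/or examining its derivative) to determine how many roots are positive and how many are negative; this pins down $\pi_G$ and $\nu_G$ and in particular justifies the claimed orderings $\lambda_2=k-1$ and $\lambda_4=\cdots=\lambda_{n-1}=-1$.

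For the sign analysis, I would check the values $f(0)=2(k-2)>0$ (for $k\ge3$) and examine the behaviour at large positive and large negative arguments (as a monic cubic, $f(x)\to+\infty$ as $x\to+\infty$ and $f(x)\to-\infty$ as $x\to-\infty$). Evaluating $f$ at a point slightly less than $0$, at a small positive value, and at a point comparable to $k-1$ should reveal the sign pattern: I expect exactly one root larger than $k-1$ (the Perron root $\lambda_1$, which must exceed $k-1$ since the spectral radius dominates), one root negative, and one root that is small and positive or small and negative. Determining the sign of the smallest-magnitude root is the delicate point, so I would evaluate $f$ at, say, $x=1$ and some small negative value to separate the cases. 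Once the inertia is fixed, the orderings in the statement follow: $k-1$ sits in second place below the Perron root, and the $(n-4)$-fold eigenvalue $-1$ occupies the middle of the negative part.

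To establish $s^+(G)\ge n-1$ and $s^-(G)\ge n-1$, I would avoid computing the roots of $f$ individually and instead use the power-sum identities coming from the coefficients of $f$ via Newton's identities (or Vieta): if $r_1,r_2,r_3$ are the roots of $f$, then $r_1+r_2+r_3=k-2$ and $r_1r_2+r_1r_3+r_2r_3=-(1+k)$, so $r_1^2+r_2^2+r_3^2=(k-2)^2+2(1+k)=k^2-2k+6$. Combining this with the contribution $(k-1)^2$ from the eigenvalue $k-1$ and $(n-4)\cdot 1=2k-3$ from the $(-1)$-eigenvalues gives the total $\sum_i\lambda_i^2=2|E(G)|$ as a consistency check. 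The key realization is that $s^+(G)$ already contains $(k-1)^2=(k-1)^2$ from the $\lambda_2$ eigenvalue alone, and $(k-1)^2\ge 2k=n-1$ precisely when $k\ge 3$ (since $(k-1)^2-(2k)=k^2-4k+1\ge0$ for $k\ge4$, with the borderline case $k=3$ handled directly); adding the positive roots of $f$ only helps.

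The main obstacle I anticipate is the $s^-(G)$ bound, since the negative part must accumulate to at least $n-1=2k$ using the $(n-4)$ copies of $-1$ (contributing $2k-3$) plus the negative roots of $f$. The $(-1)$-eigenvalues fall short of $2k$ by exactly $3$, so I must show the negative roots of $f$ contribute at least $3$ in squared sum. Here I would use the power-sum $r_1^2+r_2^2+r_3^2=k^2-2k+6$ restricted to the negative roots: after the sign analysis identifies which roots are negative, I would bound the squared magnitude of the negative root(s) from below, most cleanly by showing the unique large negative root has magnitude bounded away from zero (e.g.\ locating it below some explicit negative value using $f$). If the sign analysis shows exactly one negative root of $f$, I must verify that single root has square at least $3$; if it shows two negative roots, the bound is easier. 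I expect this final squared-magnitude estimate on the negative root of the cubic to be the step requiring the most care, and I would complete it by evaluating $f$ at a suitable explicit negative point to confine the root and thereby bound its square below by $3$.
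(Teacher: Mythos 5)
Your proposal follows essentially the same route as the paper's proof: read the spectrum from Proposition~\ref{p:extbarbell}, locate the three cubic roots by evaluating $f$ at explicit points (the paper uses $f(k-1)=-2<0$ and $f(-1)=2k-2>0$ to conclude $\mu_1>k-1>\mu_2>-1>\mu_3$), bound $s^+(G)$ by the two largest eigenvalues, and bound $s^-(G)$ by $(n-4)$ plus a lower bound on $\mu_3^2$ obtained by evaluating $f$ at an explicit negative point (the paper uses $f(-9/5)>0$, giving $\mu_3^2>3.24>3$). The only wrinkle is that your claim $(k-1)^2\ge 2k$ fails at $k=3$, but as you note that case is handled by also invoking $\lambda_1>k-1$, which is exactly what the paper does uniformly via $s^+(G)\ge\lambda_1^2+\lambda_2^2>2(k-1)^2>2k$ for all $k\ge3$.
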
 
\bpf
By Proposition \ref{p:extbarbell}, the eigenvalues of $G$ are $-1$ with multiplicity $2k-3=n-4$, $k-1$, and the three roots {$\mu_1\ge \mu_2\ge \mu_3$} of $ f(x) = x^{3} - (k-2)x^2 - (1+k)x + 2(k-2).$ Since $f(k-1)=-2<0$ {and $f(-1)=2k-2>0$, $\mu_1>k-1>\mu_2>-1>\mu_3$.  Thus $\lam_1(G)=\mu_1$, $\lambda_2(G)=k-1$,  $\lam_3(G)=\mu_2$, $ \lambda_{4}(G) =\cdots =  \lambda_{n-1}(G) = -1$ and $\lam_{n}(G)=\mu_3$}.

For $s^{+}(G)$, we have that
\[
    s^{+}(G) \ge \lambda_1^{2}(G) + \lambda_2^{2}(G) 
     > 2(k-1)^2=2k^2-4k+2. \]
This implies that $s^{+}(G) > n-1=2k$ since $k  \geq 3.$ 

For $s^{-}(G)$ we have that
\[
    s^{-}(G) \ge \sum_{j=1}^{n-4} (-1)^{2} + \lambda_{n}^{2}(G) 
     = n-4 + \lambda_{n}^{2}(G).
\]
Since $f(-\frac 9 5)=\frac{14}{25}k-\frac{194}{125}$, for $k\ge 3$ we have $f(-\frac 9 5)\ge \frac{16}{125}>0$.  This implies $\lam_{n}(G)<-\frac 9 5$ and thus
\[
    s^{-}(G) > 
      n-4 + \lp -\frac 9 5\rp^{2} =n-4+3.24>n-1.
\]
\epf

\subsection{Unicyclic graphs}\label{subsec:unicyclic}

A \emph{unicyclic graph} is a connected graph that has exactly one cycle. In this section we apply  results from Section   \ref{sec:haemersinterlacing} and results established in other papers
to unicyclic graphs. We begin by applying Lemma \ref{l:partition-quotient} to the family of unicyclic graphs $U_{n,3}$ obtained by adding an edge between two leaves of the star $K_{1,n-1}$ (a \emph{leaf} is a vertex of degree one); see Figure \ref{fig:Un3}.
   
   \begin{figure}[h!]
    \centering
    \includegraphics[scale=0.51]{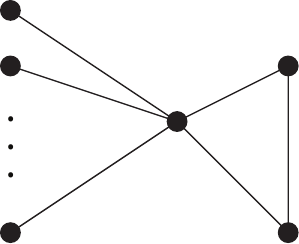}     
    \caption{The graph $U_{n,3}$. \label{fig:Un3}}
\end{figure}
   \begin{proposition} For $n\ge 3$, $s^+(U_{n,3}) \geq n-1$ and $s^-(U_{n,3}) \geq n-1.$
   \end{proposition}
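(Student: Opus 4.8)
The plan is to treat $s^+$ and $s^-$ separately, exploiting the structure of $U_{n,3}$: it consists of a triangle on $\{z,a,b\}$, where $z$ is the former star center and $a,b$ are the two joined leaves, together with $n-3$ pendant vertices $p_1,\dots,p_{n-3}$ attached to $z$. For $n=3$ the graph is $K_3=C_3$, whose spectrum $\{2,-1,-1\}$ gives $s^+=4\ge 2$ and $s^-=2\ge 2$ directly, so I assume $n\ge 4$ below.

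For $s^+$ I would apply Lemma~\ref{l:partition-quotient} with $S=\{z\}$. Here $d_1=0$, $d_2=\tfrac{2}{n-1}$, and the edge cut has size $c=n-1$, so the determinant $d_1d_2-\tfrac{c^2}{(n-s)s}=-(n-1)<0$ and we are in the second case. This yields $s^+(U_{n,3})\ge\lambda_+^2$, where $\lambda_+=\tfrac{1}{n-1}+\sqrt{\tfrac{1}{(n-1)^2}+(n-1)}>\sqrt{n-1}$, whence $s^+(U_{n,3})>n-1$. (Equivalently, $z$ is a dominating vertex, so $s^+\ge n-1$ is immediate from Remark~\ref{r:rho+uv}.)

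For $s^-$ the single-eigenvalue bound of Lemma~\ref{l:partition-quotient} is not enough, and the heart of the argument is to determine the spectrum exactly. I would observe that $a$ and $b$ are adjacent twins and the $n-3$ pendants are independent twins, so the partition $X=(\{z\},\{a,b\},\{p_1,\dots,p_{n-3}\})$ is equitable and satisfies the hypotheses of Theorem~\ref{t:twin-quot}. That theorem gives $\spec(A(U_{n,3}))=\{(-1)^{(1)},0^{(n-4)}\}\cup\spec(B)$, where
\[
B=\mtx{0 & 2 & n-3\\ 1 & 1 & 0\\ 1 & 0 & 0},\qquad p_B(x)=x^3-x^2-(n-1)x+(n-3).
\]
Writing $\mu_1\ge\mu_2\ge\mu_3$ for the roots of $p_B$, a sign check ($p_B(0)=n-3>0$, $p_B(1)=-2<0$, $p_B(-1)=2n-6>0$, and $p_B(\pm\infty)=\pm\infty$) shows $\mu_1>1>\mu_2>0>-1>\mu_3$; in particular $-1$ and $0$ are not roots of $p_B$ for $n\ge 4$, so there is no collision of multiplicities. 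Hence the only negative eigenvalues of $U_{n,3}$ are the twin eigenvalue $-1$ and $\mu_3$, giving $s^-(U_{n,3})=1+\mu_3^2$.

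It then remains to prove $\mu_3^2\ge n-2$. The clean way is the substitution $x=-\sqrt{n-2}$: using $n-1=(n-2)+1$ and $n-3=(n-2)-1$, a short computation collapses $p_B(-\sqrt{n-2})$ to $\sqrt{n-2}-1$, which is $\ge 0$ for $n\ge 3$. Since $-\sqrt{n-2}\le -1<\mu_2$ lies below every root except $\mu_3$, and $p_B$ is negative below $\mu_3$ but nonnegative on $(\mu_3,\mu_2)$, this forces $\mu_3\le -\sqrt{n-2}$, i.e. $\mu_3^2\ge n-2$. Therefore $s^-(U_{n,3})=1+\mu_3^2\ge n-1$, as required. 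The main obstacle is precisely this last inequality: because $\mu_3^2$ only barely exceeds $n-2$, the extra $+1$ contributed by the twin eigenvalue $-1$ is indispensable. This is exactly why a single $2\times 2$ quotient cannot finish $s^-$ on its own, for interlacing forces its smaller eigenvalue to satisfy $\lambda_-^2\le\lambda_n^2=\mu_3^2<n-1$; the exact twin spectrum supplied by Theorem~\ref{t:twin-quot} is what makes the bound go through.
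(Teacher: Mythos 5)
Your proof is correct, and its $s^+$ half coincides with the paper's (the center of the star is a dominating vertex, so the $K_{1,n-1}$ subgraph/Remark \ref{r:rho+uv} suffices; the paper uses exactly this). For $s^-$, however, you take a genuinely different route. The paper stays with the coarse two-part partition $\{z\}$ versus the rest and uses Lemma \ref{l:partition-quotient} (Haemers interlacing on the $2\times 2$ quotient) to prove $\lambda_n(U_{n,3})^2 \ge n-2$ by a direct radical estimate, then invokes Proposition \ref{t:twin} to add the adjacent-twin eigenvalue $-1$, giving $s^-(U_{n,3}) \ge \lambda_n^2 + 1 \ge n-1$. You instead pass to the three-class equitable partition and Theorem \ref{t:twin-quot} to compute the spectrum exactly, $\spec(U_{n,3}) = \{(-1)^{(1)},0^{(n-4)}\}\cup\{\mu_1,\mu_2,\mu_3\}$ with $p_B(x) = x^3 - x^2 - (n-1)x + (n-3)$, so that $s^-(U_{n,3}) = 1+\mu_3^2$ holds with equality, and you then locate $\mu_3 \le -\sqrt{n-2}$ from the sign of $p_B(-\sqrt{n-2}) = \sqrt{n-2}-1$. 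Both arguments hinge on the same two ingredients, namely the twin eigenvalue $-1$ and a bound of $n-2$ on the square of the least eigenvalue, but your version buys exactness: since $p_B(-\sqrt{n-1}) = -2 < 0$, one in fact has $n-2 < \mu_3^2 < n-1$, which substantiates your closing remark that the twin's $+1$ is indispensable and shows that both proofs are essentially tight. The cost is computing and analyzing a cubic; the paper's version needs only the $2\times 2$ quotient but yields merely an inequality, via a somewhat messier manipulation of radicals.
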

   \bpf Observe first that $U_{n,3}$ contains a $K_{1,n-1}$ subgraph, so $s^+(U_{n,3})\ge n-1$.  Let $S$ be the set consisting of only the $n-1$-degree vertex of $U_{n,3}$.   Then in the notation of Lemma \ref{l:partition-quotient}, $s=1$, $n-s=n-1$, $c=n-1$, $d_1=0$ and $d_2=\frac 1{n-1}$. Since  $d_1d_2 - \frac{c^2}{(n-s)s} =-\frac{(n-1)^2}{n-1}< 0$, by Lemma \ref{l:partition-quotient},
  \bea
  \lambda_n(U_{n,3})^2 &\geq& \frac{1}{4} \left(\frac1{n-1} { -} \sqrt{\lp\frac1{n-1}\rp^2+ 4\frac{(n-1)^2}{n-1}} \right)^2\\ 
  &=&\frac{1}{4} \left(\lp\frac1{n-1}\rp^2 { -} \frac2{n-1}\sqrt{\lp\frac1{n-1}\rp^2+ 4(n-1)} +\lp\frac1{n-1}\rp^2+ 4(n-1)\right)\\
  &\ge & (n-1)-1.
  \eea
  Since the two vertices of degree two are adjacent twins, $-1\in\spec(U_{n,3})$ by  Proposition \ref{t:twin} and \[s^-(U_{n,3})\ge  \lambda_n(U_{n,3})^2+1=n-1.\]
   \epf

   Next we apply results of Guo and Spiro in  \cite{GS2022} to unicyclic graphs. 
 A \textit{homomorphism} from a graph $G$ to a graph $H$ is a map $\varphi:V(G)\to V(H)$ such that $\varphi(u)\varphi(v)\in E(H)$ whenever $uv\in E(G)$.    Observe that if $G$ is bipartite with vertex partition $V(G)=X\cup Y$ and $V(K_2)=\{1,2\}$, then $\varphi:V(G)\to V(K_2)$ defined by $\varphi(x)=1$ for $x\in X$ and $\varphi(y)=2$ for $y\in Y$ is  a homomorphism.   The \textit{Kneser graph} $\Kn(a,k)$ is  the graph whose vertices are the $k$-subsets of an $a$-element set, and two $k$-subsets are adjacent whenever they are disjoint. The \textit{fractional chromatic number} of a graph $G$ is given by
	\[\chi_f(G)=\inf_{(a,k)} \frac{a}{k},\]
where the infimum runs over all pairs $(a,k)$ such that there exist a homomorphism from $G$ to $\Kn(a,k)$. For more background on the fractional chromatic number, see \cite{Sch2011}.
{Guo and Spiro recently extended a bound of Ando and Lin \cite{AndLin2015} to the fractional chromatic number:

\begin{theorem}\label{GS22:frac-chrom}
 {\rm \cite{GS2022}}  For any graph $G$,
	\[\chi_f(G)\ge 1+\max\left\{\frac{s^+(G)}{s^-(G)},\frac{s^-(G)}{s^+(G)}\right\}. \]   
\end{theorem}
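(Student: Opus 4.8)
The plan is to exploit the variational description of the fractional chromatic number recorded just above, namely $\chi_f(G)=\inf_{(a,k)}\frac ak$ over all pairs admitting a homomorphism $\varphi\colon V(G)\to V(\Kn(a,k))$, and to reduce the inequality to a single such homomorphism. Fixing $\varphi$, it suffices to prove $\frac ak\ge 1+\frac{s^+(G)}{s^-(G)}$ and $\frac ak\ge 1+\frac{s^-(G)}{s^+(G)}$, since taking the infimum over $(a,k)$ then yields the claim. The main object is a positive semidefinite ``coloring Gram matrix'' $W$ that plays, in the fractional setting, the role that a partition into color classes plays in the integer chromatic bound of Ando and Lin \cite{AndLin2015}.

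To build $W$, for each $k$-subset $B\subseteq\{1,\dots,a\}$ I would take the centered indicator $q_B=\mathbb 1_B-\frac ka\ones\in\R^a$ and normalize $p_B=\sqrt{\tfrac{a}{k(a-k)}}\,q_B$, so that $\langle p_B,p_B\rangle=1$ and $\langle p_B,p_{B'}\rangle=-\frac{k}{a-k}$ whenever $B\cap B'=\varnothing$. Setting $W_{uv}=\langle p_{\varphi(u)},p_{\varphi(v)}\rangle$ produces $W=P^TP\succeq 0$ with $W_{uu}=1$ for all $u$ and $W_{uv}=-\frac{k}{a-k}$ for every edge $uv\in E(G)$, because adjacent vertices of $G$ are sent to disjoint $k$-sets. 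Writing $t=a/k$, a direct computation then gives
\[
\tr\big(A(G)\,W\big)=2|E(G)|\cdot\Big(-\tfrac{k}{a-k}\Big)=-\tfrac{1}{t-1}\,\tr\big(A(G)^2\big)=-\tfrac{1}{t-1}\big(s^+(G)+s^-(G)\big),
\]
which is formally identical to the identity satisfied by the analogous $W$ obtained from a proper coloring with $t=\chi(G)$ color classes.

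From here the plan is to run the spectral argument of Ando and Lin: decompose $A(G)=A_+-A_-$ into its positive and negative parts (both positive semidefinite, with $A_+A_-=0$, $\tr(A_+^2)=s^+(G)$ and $\tr(A_-^2)=s^-(G)$), substitute into the identity above, and use $W\succeq 0$ together with the unit-diagonal normalization to deduce $s^+(G)\le (t-1)s^-(G)$; interchanging the roles of $A_+$ and $A_-$ gives the symmetric bound. I expect the main obstacle to be exactly this last step: the sharp linear constant $t-1$, which is tight for $\Kn(a,1)=K_a$ where $s^+(K_a)/s^-(K_a)=a-1$, cannot be obtained from a naive Cauchy--Schwarz estimate of $\tr(A_\pm W)$, since that over-counts and only yields the weaker constant $(t-1)^2$. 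Controlling the cross term requires the careful use of the positive/negative spectral decomposition in \cite{AndLin2015} rather than a Frobenius-norm bound; once that integer argument is recognized as depending only on the existence of a matrix $W$ with the three listed properties, it transfers with $\chi$ replaced by $a/k$, and the theorem follows on taking the infimum over homomorphisms to Kneser graphs.
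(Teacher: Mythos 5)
Your construction of $W$ is correct as far as it goes: the centered, normalized indicators of $k$-sets are unit vectors with inner product $-k/(a-k)=-1/(t-1)$ (with $t=a/k$) on disjoint pairs, so pulling back along the homomorphism $\varphi$ gives $W\succeq 0$ with unit diagonal and $W_{uv}=-1/(t-1)$ on every edge, and your trace identity holds. What this shows is that $G$ admits a strict vector $t$-coloring with $t=a/k$. The gap is the final step, which is the entire difficulty of the theorem: you assert that the Ando--Lin argument ``depends only on the existence of a matrix $W$ with the three listed properties,'' but you never verify this, and it is not a correct reading of \cite{AndLin2015}. Ando and Lin work with an actual proper coloring, i.e., with coordinate projections $P_1,\dots,P_t$ summing to $I$ whose pinching annihilates $A(G)$ (equivalently, conjugation by the diagonal unitary $U=\sum_j \omega^j P_j$ with $\omega=e^{2\pi i/t}$); identities such as $\sum_{j=0}^{t-1}U^jA_+U^{-j}=\sum_{j=0}^{t-1}U^jA_-U^{-j}$, which drive the sharp constant, have no analogue when all that is retained is the Gram matrix $W$. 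As you yourself observe, every estimate that uses only ``$W\succeq 0$, unit diagonal, prescribed edge entries'' (Schur multiplication by $W$, Cauchy--Schwarz on the cross terms, etc.) degrades the constant to $(t-1)^2$ or $t$, and you do not supply the argument that recovers $t-1$.

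The missing lemma --- that existence of such a $W$ forces $s^+(G)\le (t-1)s^-(G)$ --- is precisely the statement that the \emph{strict vector chromatic number} (equivalently the Lov\'asz theta number of the complement) is at least $1+\max\{s^+/s^-,\,s^-/s^+\}$. Since the strict vector chromatic number is at most $\chi_f(G)$, this is a strictly \emph{stronger} theorem than the one you are trying to prove; it is not established by Ando--Lin, was not established in \cite{GS2022}, and requires genuinely new ideas. Indeed, if your reduction were valid, the Guo--Spiro theorem would follow from \cite{AndLin2015} in two paragraphs, which is strong evidence that it does not. Note also that the paper under review does not prove this theorem at all: it is quoted from \cite{GS2022}, whose actual route is different from yours --- they prove that a homomorphism from $G$ to an edge-transitive graph $H$ forces $\lambda_{\max}(H)/|\lambda_{\min}(H)|\ge \max\{s^+(G)/s^-(G),\,s^-(G)/s^+(G)\}$ (this is the ``stronger theorem'' stated immediately afterwards in the paper) and then apply it to $H=\Kn(a,k)$, for which $\lambda_{\max}/|\lambda_{\min}|=(a-k)/k=a/k-1$. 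To repair your proof you would either have to prove the vector-coloring strengthening of Ando--Lin or work with the additional structure your $W$ actually has (its off-edge entries, coming from intersection sizes of $k$-sets, or the automorphism group of the Kneser graph), which is in effect what \cite{GS2022} does.
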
}

   \begin{corollary}\label{cor:unicyclic-chi-f}
       {Let $G$ be} a unicyclic graph on $n$ vertices with an odd cycle of order $2m+1$ where $m\geq 2$. Then $ s^+(G), s^-(G) \ge \frac{2m}{2m+1} n$. In particular,  ${s^+(C_{2m+1}), s^-(C_{2m+1})} \ge 2m  =|V(C_{2m+1})|-1$.   \end{corollary}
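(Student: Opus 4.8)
The plan is to derive the bound from the Guo--Spiro inequality (Theorem \ref{GS22:frac-chrom}) together with the identity $s^+(G)+s^-(G)=2|E(G)|$ and an upper bound on the fractional chromatic number. Since $G$ is unicyclic it has exactly $n$ edges, so $s^+(G)+s^-(G)=2n$. Writing $a=s^+(G)$, $b=s^-(G)$ and assuming $b\le a$ (the argument is symmetric), Theorem \ref{GS22:frac-chrom} gives $\chi_f(G)\ge 1+\frac ab$, hence $a\le(\chi_f(G)-1)b$ and $2n=a+b\le \chi_f(G)\,b$. Therefore $b\ge \frac{2n}{\chi_f(G)}$, and since $a\ge b$ we conclude $s^+(G),s^-(G)\ge \frac{2n}{\chi_f(G)}$. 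So everything reduces to showing $\chi_f(G)\le \frac{2m+1}{m}$.

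To bound $\chi_f(G)$ I would exhibit a homomorphism $\varphi\colon G\to \Kn(2m+1,m)$, which gives $\chi_f(G)\le \frac{2m+1}{m}$ directly from the definition of $\chi_f$. On the cycle, label its vertices $0,1,\dots,2m$ and set $\varphi(i)=\{mi+1,\dots,mi+m\}\pmod{2m+1}$; consecutive images are disjoint because their union is a block of $2m$ consecutive residues modulo $2m+1$, so this is a homomorphism on $C_{2m+1}$. To extend $\varphi$ to the rest of $G$, observe that deleting the cycle edges leaves a forest whose tree components meet the cycle in single vertices; consequently the vertices off the cycle can be processed in a breadth-first order in which each newly considered vertex has exactly one already-assigned neighbor. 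Since every vertex of $\Kn(2m+1,m)$ has a neighbor (the complement of an $m$-subset of a $(2m+1)$-set has $m+1\ge m$ elements, so a disjoint $m$-subset exists), we can extend $\varphi$ one vertex at a time. This yields $\chi_f(G)\le \frac{2m+1}{m}$.

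Combining the two parts gives $s^+(G),s^-(G)\ge \frac{2n}{\chi_f(G)}\ge \frac{2mn}{2m+1}=\frac{2m}{2m+1}n$, as claimed; taking $G=C_{2m+1}$, so that $n=2m+1$, gives $s^+(C_{2m+1}),s^-(C_{2m+1})\ge 2m=|V(C_{2m+1})|-1$. The main obstacle is the upper bound $\chi_f(G)\le \frac{2m+1}{m}$: once the explicit Kneser homomorphism on the cycle is in hand, the greedy extension over the pendant trees is routine, but the construction (and the verification that consecutive cycle images are disjoint and that every Kneser vertex has a neighbor) is where the real content lies. Alternatively, one could invoke the fact that the fractional chromatic number of a graph equals the maximum of the fractional chromatic numbers of its blocks, the blocks of $G$ being the single cycle $C_{2m+1}$ (with $\chi_f=\frac{2m+1}{m}$) and bridges (each with $\chi_f=2\le \frac{2m+1}{m}$).
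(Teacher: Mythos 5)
Your proof is correct, and its top-level skeleton matches the paper's: both rest on the Guo--Spiro bound (Theorem \ref{GS22:frac-chrom}) combined with the identity $s^+(G)+s^-(G)=2|E(G)|=2n$, followed by the same rearrangement to get $s^+(G),s^-(G)\ge 2n/\chi_f(G)$. Where you genuinely diverge is in how the fractional chromatic number is controlled. The paper shows $\chi_f(G)=\chi_f(C_{2m+1})$ by exhibiting homomorphisms in both directions --- the nontrivial one folds each pendant tree $T_i$ onto the cycle by sending the partite class of $T_i$ containing its attachment vertex $v_i$ to $v_i$ and the other class to a cycle-neighbor of $v_i$ --- and then cites the well-known value $\chi_f(C_{2m+1})=2+\frac1m$. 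You instead build an explicit homomorphism $G\to\Kn(2m+1,m)$ from scratch: the arithmetic labeling $\varphi(i)=\{mi+1,\dots,mi+m\}\pmod{2m+1}$ on the cycle (which is valid, including at the wrap-around edge, since consecutive images occupy $2m$ consecutive residues), extended greedily over the trees using the fact that every Kneser vertex has a neighbor. Your route is more self-contained, since it only uses the definition of $\chi_f$ as an infimum over Kneser homomorphisms and needs only the upper bound $\chi_f(G)\le\frac{2m+1}{m}$ (you correctly observe equality is never used); in effect you inline a proof of the upper-bound half of the ``well-known'' fact the paper cites. The paper's folding argument is shorter and isolates the structural content (unicyclic $\Rightarrow$ $G$ retracts onto its cycle), which is also what makes the equality of fractional chromatic numbers transparent; your closing remark about blocks is a third valid way to reach the same bound.
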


\begin{proof}
 Let $C$ be the cycle of the unicyclic graph $G$.  Since $C$ is a subgraph of $G$, we have that $C$ has a homomorphism to $G$. Furthermore, $G$ has a homomorphism to $C$: Since $G$ is unicyclic, $G=C\cup \lp\cup_{i}^k T_i\rp$ where $T_i$ is a tree and $|V(C)\cap V(T_i)|=1$; we denote the unique vertex in $V(C)\cap V(T_i)$ by $v_i$. Since each $T_i$ is bipartite, we have a homomorphism from $T_i$ to $C$ by mapping the partite class containing $v_i$ to $v_i$ and the other partite class to a neighbor of $v_i$ on the cycle.  Together these maps define a homomorphsim from $G$ to $C$.
By composition of homomorphisms, we see that $G$ and $C$ have equal fractional chromatic numbers. It is well-known that the fractional chromatic number of an odd cycle $C_{2m+1}$ is $2 + \frac{1}{m}$. Thus, the fractional chromatic number of  a unicyclic graph containing a cycle $C_{2m+1}$ is also $2 + \frac{1}{m}$.

Thus, we have that 
 \[ 
  2 + \frac{1}{m} \ge 1+\max\left\{\frac{s^+(G)}{s^-(G)},\frac{s^-(G)}{s^+(G)}\right\},\quad \mbox{
   which implies }\quad
      \frac{m+1}{m} \ge \frac{s^+(G)}{s^-(G)} \quad \text{and} \quad   \frac{m+1}{m} \ge \frac{s^-(G)}{s^+(G)}.   \]

Since $s^+(G) + s^-(G) = 2 |E(G)| = 2n$, we can substitute $s^+(G) = 2n - s^-(G)$ into the first expression and obtain
\bea
     (m+1)s^-(G)  &\ge& m s^+(G)  = m (2n-s^-) \\
     (m+1 + m )s^-(G) &\ge& 2mn \\
     s^-(G) &\ge& \frac{2mn}{2m+1}. 
\eea
{By a similar} argument, we also obtain $  s^+(G) \ge \frac{2mn}{2m+1} $.
\end{proof}

We note that the conjecture was shown to be true for all regular graphs except for odd cycles in Theorem 8 in \cite{EFGW16}. 
It was claimed there that the conjecture was also true for odd cycles but no proof was presented.  Thus Corollary \ref{cor:unicyclic-chi-f} resolves the last regular graph case.
We note that, except in the case of the odd cycle, Corollary \ref{cor:unicyclic-chi-f} does not resolve Conjecture \ref{conj:energies} for the class of unicyclic graphs since $(\frac{2m}{2m+1} ) n \geq n-1$ only when $n \leq 2m+1$.

Using a stronger theorem from \cite{GS2022}, we can show that  unicyclic graphs containing a long odd cycle also satisfy the conjecture.

\begin{theorem}{\rm {\cite{GS2022}}}
If $G$ has a homomorphism to an edge-transitive graph $H$, then 
    \[\frac{\lambda_{\max}(H)}{|\lambda_{\min}(H)|}\ge \max\left\{\frac{s^+(G)}{s^-(G)},\frac{s^-(G)}{s^+(G)}\right\},\]
	where $\lambda_{\max}(H),\lam_{\min}(H)$ denote the greatest and least eigenvalue of $H$, respectively.
\end{theorem}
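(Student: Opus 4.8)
The plan is to prove the two inequalities $|\lambda_{\min}(H)|\,s^+(G)\le \lambda_{\max}(H)\,s^-(G)$ and $|\lambda_{\min}(H)|\,s^-(G)\le \lambda_{\max}(H)\,s^+(G)$ separately; the two arguments are mirror images, obtained by interchanging the roles of the positive and negative parts, so I will describe the first. It helps to keep the model case $H=K_t$ in mind: there $\lambda_{\max}=t-1$ and $\lambda_{\min}=-1$, a homomorphism $G\to K_t$ is a proper $t$-colouring, and the claim $s^+(G)\le (t-1)s^-(G)$ is exactly the Ando--Lin bound $\chi(G)\ge 1+s^+(G)/s^-(G)$ (equivalently $s^-(G)\ge 2|E(G)|/\chi(G)$). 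So the aim is to run an Ando--Lin-type argument with the complete graph replaced by an arbitrary edge-transitive target $H$.

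The device that transports the geometry of $H$ into $G$ is the pullback of the spectral data of $B:=A(H)$. Write $B=B^+-B^-$ with $B^+,B^-\succeq 0$ the positive and negative parts, and let $P$ be the $|V(G)|\times |V(H)|$ characteristic ($0/1$) matrix of the fibres of $\varphi$, so that $(PXP^{T})_{uv}=X_{\varphi(u)\varphi(v)}$ for any matrix $X$ on $V(H)$. The key observation is that $B^+$ and $B^-$, being functions of $B$, commute with every automorphism of $H$; hence their entries are constant on each orbit of vertex pairs, and edge-transitivity forces $(B^+)_{ab}$ and $(B^-)_{ab}$ to take a single common value over all edges $ab\in E(H)$. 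Reading these values off from $\tr(BB^+)=s^+(H)$ and $\tr(BB^-)=-s^-(H)$ gives $(B^+)_{ab}=s^+(H)/(2|E(H)|)$ and $(B^-)_{ab}=-s^-(H)/(2|E(H)|)$ on edges. Since $\varphi$ maps each edge of $G$ to an edge of $H$, summing over $E(G)$ produces the two clean identities
\[
\langle A,\,PB^+P^{T}\rangle=\frac{|E(G)|}{|E(H)|}\,s^+(H),\qquad \langle A,\,PB^-P^{T}\rangle=-\frac{|E(G)|}{|E(H)|}\,s^-(H),
\]
where $A:=A(G)$ and $\langle X,Y\rangle=\tr(XY)$. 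In addition $PB^+P^{T}\preceq \lambda_{\max}(H)\,PP^{T}$ and $PB^-P^{T}\preceq |\lambda_{\min}(H)|\,PP^{T}$, because $\lambda_{\max}(H)I-B^+$ and $|\lambda_{\min}(H)|I-B^-$ are positive semidefinite. These facts are the heart of the argument, and I expect them to go through cleanly.

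To finish I would split $A=A^+-A^-$ (so that $s^\pm(G)=\tr((A^\pm)^2)$) and feed the identities into a sharp comparison: dropping the nonnegative cross terms $\langle A^-,PB^+P^{T}\rangle$ and $\langle A^+,PB^-P^{T}\rangle$ yields $\langle A^+,PB^+P^{T}\rangle\ge \tfrac{|E(G)|}{|E(H)|}s^+(H)$ and $\langle A^-,PB^-P^{T}\rangle\ge \tfrac{|E(G)|}{|E(H)|}s^-(H)$, after which the semidefinite bounds are meant to convert the factor into $\lambda_{\max}(H)/|\lambda_{\min}(H)|$.

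The delicate point, and the step I expect to be the true obstacle, is to carry this out so that the final constant is \emph{linear} in $\lambda_{\max}(H)/|\lambda_{\min}(H)|$ rather than its square. A naive Cauchy--Schwarz estimate of the trace inner products loses exactly one power and only reproduces $s^+(G)\le (\lambda_{\max}(H)/|\lambda_{\min}(H)|)^2\,s^-(G)$. Ando--Lin avoid this loss in the $K_t$ case by exploiting the block structure directly — equivalently, by noting that the all-ones vector $\ones$ pulls back to a totally isotropic subspace of dimension $|V(G)|$ inside the direct sum of the $t$ unitary conjugates of $A$ whose sum vanishes. The plan is therefore to replace that block/unitary construction by the eigenvector pullbacks above and to prove the corresponding sharp energy-majorization inequality; in doing so one must also neutralise the sizes of the fibres of $\varphi$ (the matrix $PP^{T}$), which is precisely where the (bi)regularity forced on $H$ by edge-transitivity should be used.
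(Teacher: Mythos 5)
Your preparatory computations are all correct: $B^{\pm}$, being functions of $A(H)$, commute with every automorphism of $H$, so edge-transitivity does force $(B^+)_{ab}=s^+(H)/(2|E(H)|)$ and $(B^-)_{ab}=-s^-(H)/(2|E(H)|)$ on every edge $ab$; the pullback identities $\langle A,PB^{+}P^{T}\rangle=\frac{|E(G)|}{|E(H)|}s^{+}(H)$ and $\langle A,PB^{-}P^{T}\rangle=-\frac{|E(G)|}{|E(H)|}s^{-}(H)$ follow because $\varphi$ sends edges to edges; and the dominations $PB^+P^{T}\preceq\lambda_{\max}(H)\,PP^{T}$ and $PB^-P^{T}\preceq|\lambda_{\min}(H)|\,PP^{T}$ are valid. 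But the argument stops exactly where the theorem begins. Chasing all of your inequalities through yields only lower bounds on the pairings $\langle A^{+},PP^{T}\rangle$ and $\langle A^{-},PP^{T}\rangle$; no inequality you derive contains $s^{+}(G)=\tr((A^{+})^{2})$ or $s^{-}(G)=\tr((A^{-})^{2})$, and the theorem is precisely a comparison between those two quantities. The ``sharp energy-majorization inequality'' you defer to is therefore not a finishing detail but the entire content of the result, and by your own admission the only estimate you can actually execute (Cauchy--Schwarz on the trace pairings) gives $s^{+}(G)\le\bigl(\lambda_{\max}(H)/|\lambda_{\min}(H)|\bigr)^{2}\,s^{-}(G)$, which is strictly weaker---for $H=K_{t}$ it gives $\chi(G)\ge 1+\sqrt{s^{+}/s^{-}}$ instead of the Ando--Lin bound the theorem must recover. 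As it stands, the proposal is a correct collection of identities together with an unproved claim, not a proof.

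For the comparison you were asked about: the paper contains no internal proof of this statement---it is imported verbatim from \cite{GS2022}---so there is nothing in the source to measure your route against. The argument in that reference is of a different character, extending the decomposition technique behind the Ando--Lin and Wocjan--Elphick chromatic-number bounds to homomorphisms into edge-transitive targets, rather than estimating trace pairings of $A(G)$ against pullbacks of $B^{\pm}$. If you wish to complete your own route, the missing bridge from linear pairings such as $\langle A^{\pm},PP^{T}\rangle$ to the quadratic quantities $s^{\pm}(G)$ is exactly what must be constructed, and your own analysis of where Cauchy--Schwarz loses a power shows why no soft estimate will supply it.
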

\begin{theorem}\label{lem:unicylic-big-m}
Let $G$ be a unicylic graph of order $n$ with an odd cycle of length $2m+1$ such that 
\[
m \geq \frac{\pi}{2\arccos\lp \frac{n-1}{n+1}\rp} - \frac{1}{2}. 
\]
Then $s^+(G) \geq n-1$ and $s^-(G) \geq n-1.$
\end{theorem}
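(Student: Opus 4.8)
The plan is to apply the preceding edge-transitive homomorphism theorem from \cite{GS2022} with target graph $H=C_{2m+1}$, the unique odd cycle of $G$. First I would invoke the homomorphism already constructed in the proof of Corollary \ref{cor:unicyclic-chi-f}: since $G$ is unicyclic with cycle $C_{2m+1}$, writing $G=C\cup\lp\cup_i T_i\rp$ with each $T_i$ a tree meeting $C$ in a single vertex, each pendant tree folds onto the cycle to yield a homomorphism from $G$ to $C_{2m+1}$. As an odd cycle is edge-transitive, the hypothesis of the quoted theorem is satisfied.

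Next I would compute the eigenvalue ratio of $C_{2m+1}$. Its eigenvalues are $2\cos\lp\frac{2\pi j}{2m+1}\rp$ for $j=0,\dots,2m$, so $\lambda_{\max}(C_{2m+1})=2$, while the least eigenvalue occurs at $j=m$, giving $\lambda_{\min}(C_{2m+1})=2\cos\lp\pi-\frac{\pi}{2m+1}\rp=-2\cos\lp\frac{\pi}{2m+1}\rp$. Hence, writing $c=\cos\lp\frac{\pi}{2m+1}\rp$, we get $\frac{\lambda_{\max}(C_{2m+1})}{|\lambda_{\min}(C_{2m+1})|}=\frac1c$, and the theorem yields $\max\lsb\frac{s^+(G)}{s^-(G)},\frac{s^-(G)}{s^+(G)}\rsb\le\frac1c$, equivalently $c\,s^+(G)\le s^-(G)$ and $c\,s^-(G)\le s^+(G)$.

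Then I would combine these two inequalities with the identity $s^+(G)+s^-(G)=2|E(G)|=2n$, which holds because a connected unicyclic graph of order $n$ has exactly $n$ edges. Substituting $s^+(G)=2n-s^-(G)$ into $c\,s^+(G)\le s^-(G)$ gives $s^-(G)\ge\frac{2nc}{1+c}$, and by the symmetric substitution $s^+(G)\ge\frac{2nc}{1+c}$. It then remains only to verify that $\frac{2nc}{1+c}\ge n-1$. A short rearrangement shows this is equivalent to $c(n+1)\ge n-1$, i.e.\ to $\cos\lp\frac{\pi}{2m+1}\rp\ge\frac{n-1}{n+1}$. Since $\cos$ is strictly decreasing on $[0,\pi]$ and both $\frac{\pi}{2m+1}$ and $\arccos\lp\frac{n-1}{n+1}\rp$ lie in $\lp0,\frac\pi2\rb$, this is in turn equivalent to $\frac{\pi}{2m+1}\le\arccos\lp\frac{n-1}{n+1}\rp$, which unwinds to exactly the stated hypothesis $m\ge\frac{\pi}{2\arccos\lp\frac{n-1}{n+1}\rp}-\frac12$.

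I expect the only delicate points to be the correct identification of $|\lambda_{\min}(C_{2m+1})|$ as $2\cos\lp\frac{\pi}{2m+1}\rp$ and the verification that the final inequality $\frac{2nc}{1+c}\ge n-1$ rearranges precisely into the threshold on $m$; the remainder is a direct application of the quoted theorem together with the homomorphism supplied in Corollary \ref{cor:unicyclic-chi-f} and the edge count of a unicyclic graph.
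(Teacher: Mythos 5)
Your proposal is correct and follows essentially the same route as the paper's own proof: both invoke the Guo--Spiro edge-transitive homomorphism bound with target $C_{2m+1}$ (using the homomorphism from Corollary \ref{cor:unicyclic-chi-f}), compute $\lambda_{\min}(C_{2m+1})=-2\cos\lp\frac{\pi}{2m+1}\rp$, combine with $s^+(G)+s^-(G)=2n$ to get $s^{\pm}(G)\ge \frac{2n\cos\frac{\pi}{2m+1}}{1+\cos\frac{\pi}{2m+1}}$, and finish by monotonicity of cosine. The only cosmetic difference is that you rearrange the final inequality into $\cos\lp\frac{\pi}{2m+1}\rp\ge\frac{n-1}{n+1}$ directly, whereas the paper evaluates the bound at the threshold value $m_0$ and then appeals to monotonicity; these are the same argument.
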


\begin{proof}   Since $C_{2m+1}$ is edge-transitive and there exists a homomorphism from $G$ to $C_{2m+1}$,  by  Lemma \ref{lem:unicylic-big-m} we have that
	\[\frac{\lambda_{\max}(C_{2m+1})}{|\lambda_{\min}(C_{2m+1})|}\ge \max\left\{\frac{s^+(G)}{s^-(G)},\frac{s^-(G)}{s^+(G)}\right\}.\]
The eigenvalues of $C_{2m+1}$ are $2\cos \left( \frac{2\pi j }{2m+1}\right)$ for $j = 0,\ldots, 2m$. The largest eigenvalue is equal to $2$ and the least eigenvalue is 
\[
2\cos \left( \frac{2\pi m }{2m+1}\right) = -2\cos \frac{\pi}{2m+1}. 
\]
Thus, we have that 
\[
\frac{2}{2\cos \frac{\pi}{2m+1}} =\frac{1}{\cos \frac{\pi}{2m+1}} \ge \max\left\{\frac{s^+(G)}{s^-(G)},\frac{s^-(G)}{s^+(G)}\right\}.
\]
Since $s^+(G) + s^-(G) = 2|E(G)|= 2n$, we can rearrange to obtain that 
\[
s^-(G) \geq \frac{2 n \cos \frac{\pi}{2m+1}}{1 + \cos \frac{\pi}{2m+1}} , \quad s^+(G) \geq \frac{2 n \cos \frac{\pi}{2m+1}}{1 + \cos \frac{\pi}{2m+1}} .
\]
Let $m_0 = \frac{\pi}{2\arccos \frac{n-1}{n+1}} - \frac{1}{2}$. Then, we have that $\arccos \frac{n-1}{n+1} = \frac{\pi}{2m_0 + 1} $ and so
 $
\cos  \frac{\pi}{2m_0+1} = \frac{n-1}{n+1}.
$
Thus
\[ \frac{2 n \cos \frac{\pi}{2 m_0+1}}{1 + \cos \frac{\pi}{2 m_0+1}}{ =\frac{2n\frac{n-1}{n+1}}{\frac{n+1}{n+1}+\frac{n-1}{n+1}}  =\frac{2n(n-1)}{2n}}  = n-1 .
\]
Since $\cos  \frac{\pi}{2x+1}$ increases as $x$ increases, we obtain that 
\[\cos  \frac{\pi}{2m+1} \ge \frac{n-1}{n+1}
\]
for $m\ge m_0$.
\end{proof}

To give an idea of this bound, for a unicyclic graph on $100$ vertices, we need $m \ge 7.38$ for the Lemma to apply. Figure \ref{fig:nm-lem-unicyclic} shows a plot of $(n,m)$ where 
$
 m =\frac{\pi}{2\arccos {\frac{n-1}{n+1}}} - \frac{1}{2} , 
$ as in Lemma \ref{lem:unicylic-big-m}.
\begin{figure}[htbp]
    \centering
    \includegraphics[scale=0.4]{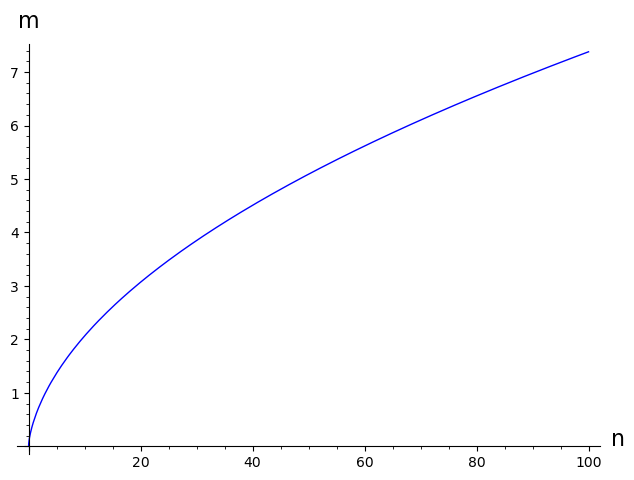}
    \caption{Plot of $n$ and $m(n) =\frac{\pi}{2\arccos  \frac{n-1}{n+1}} - \frac{1}{2}$, as in Lemma \ref{lem:unicylic-big-m}. 
    \label{fig:nm-lem-unicyclic} }
\end{figure}

\subsection{Graphs with two  positive eigenvalues}

In this section we show that if $G$ is a graph with exactly two positive eigenvalues, then $s^+(G)\geq s^-(G)$ and thus $s^+(G)\geq n-1$. Conjecture \ref{conj:energies} was established in  \cite{EFGW16} for    every graph that has exactly one positive eigenvalue, one negative eigenvalue, or (two negative eigenvalues and minimum degree at least two).  Since  $\lambda_1\ge \lambda_n$ for every graph, $\nu_G=1$ implies $\pi_G=1$ and thus $\pi_G=2$ implies $\nu_G\ge 2$.

\begin{proposition}\label{p:2pos}
Let $G$ be a connected graph of order $n\ge 4$ with $\pi_G=2$ positive eigenvalues.
Define  $\mu_i=\lambda_i, i=1,2$, $\mu_i=0, i=3,\dots,\nu_G$, and $\theta_i=|\lambda_{n+1-i}|, i=1,\dots,\nu_G$.   Then the  positive eigenvalues majorize the (reordered) absolute values of the negative eigenvalues, i.e.,
$
\sum_{i=1}^{k}\mu_{i}\geq\sum_{i=1}^{k}{\theta_{i}}
$
for all $k\leq \nu_G-1$ and $\sum_{i=1}^{\nu_G}\mu_{i}=\sum_{i=1}^{\nu_G}{\theta_{i}}$.  Whenever the  positive eigenvalues majorize the (reordered) absolute values of the negative eigenvalues, $s^+(G)\geq s^-(G)$ and $s^+(G)\geq n-1$.
\end{proposition}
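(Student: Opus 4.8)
The plan is to prove the two assertions in order: first the majorization of the reordered absolute values of the negative eigenvalues by the (zero-padded) positive eigenvalues, and then the inequality $s^+(G)\ge s^-(G)$ as a consequence of that majorization together with the convexity of $x\mapsto x^2$. Recall from the discussion preceding the statement that $\pi_G=2$ forces $\nu_G\ge 2$, so both length-$\nu_G$ sequences $(\mu_i)$ and $(\theta_i)$ are well defined and nonincreasing.

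For the majorization I would split into three ranges of $k$. The equality at $k=\nu_G$ is just the trace identity $\sum_{i=1}^n\lambda_i=0$: since the only nonzero $\mu_i$ are $\mu_1=\lambda_1$ and $\mu_2=\lambda_2$, we get $\sum_{i=1}^{\nu_G}\mu_i=\lambda_1+\lambda_2=\sum_{j=n-\nu_G+1}^{n}(-\lambda_j)=\sum_{i=1}^{\nu_G}\theta_i$. The case $k=1$ is the only genuinely spectral inequality: it asserts $\mu_1=\lambda_1\ge\theta_1=|\lambda_n|$, which is exactly the Perron--Frobenius fact that $\lambda_1=\rho(G)$ dominates the modulus of every eigenvalue, $A(G)$ being nonnegative. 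Finally, for $2\le k\le\nu_G-1$ the inequality is automatic: since $\mu_i=0$ for $i\ge 3$, the left-hand side is the constant $\lambda_1+\lambda_2=\sum_{i=1}^{\nu_G}\theta_i$, so
\[
\sum_{i=1}^{k}\mu_i-\sum_{i=1}^{k}\theta_i=\sum_{i=k+1}^{\nu_G}\theta_i\ge 0,
\]
using only that every $\theta_i\ge 0$. This settles the majorization claim for all $k$.

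For the second assertion I would invoke Karamata's inequality, the majorization inequality for convex functions. The two sequences $(\mu_i)_{i=1}^{\nu_G}$ and $(\theta_i)_{i=1}^{\nu_G}$ are each nonincreasing, have equal total, and the former majorizes the latter; applying the convex function $\phi(x)=x^2$ yields $\sum_{i=1}^{\nu_G}\mu_i^2\ge\sum_{i=1}^{\nu_G}\theta_i^2$. The padded coordinates $\mu_i=0$ for $i\ge 3$ contribute nothing, so the left side is exactly $\lambda_1^2+\lambda_2^2=s^+(G)$, while the right side is $s^-(G)$; hence $s^+(G)\ge s^-(G)$. This step is stated for a general majorizing pair, so the same Karamata argument applies verbatim whenever the positive eigenvalues majorize the reordered absolute values of the negative ones, padding the shorter list with zeros (which leaves both sums of squares unchanged).

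To finish, I would combine $s^+(G)\ge s^-(G)$ with the global identity $s^+(G)+s^-(G)=\sum_{i=1}^n\lambda_i^2=2|E(G)|$. Since $G$ is connected of order $n$, $|E(G)|\ge n-1$, and therefore $2s^+(G)\ge s^+(G)+s^-(G)=2|E(G)|\ge 2(n-1)$, giving $s^+(G)\ge n-1$. The only place where structure genuinely enters is the $k=1$ majorization inequality, where non-bipartiteness is captured through Perron--Frobenius; the remaining inequalities are bookkeeping with the trace identity and the nonnegativity of the $\theta_i$, and the passage from majorization to $s^+\ge s^-$ is a direct application of Karamata's inequality to $x^2$. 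I do not anticipate a serious obstacle; the main care-point is keeping the ordering and zero-padding conventions consistent so that Karamata applies cleanly.
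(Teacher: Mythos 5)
Your proof is correct and follows essentially the same route as the paper's: the trace identity gives equality of the full sums, Perron--Frobenius gives $\mu_1\ge\theta_1$, the zero-padding makes the intermediate partial-sum inequalities automatic, and Karamata's inequality applied to $x\mapsto x^2$ yields $s^+(G)\ge s^-(G)$, whence $s^+(G)\ge n-1$ via $s^+(G)+s^-(G)=2|E(G)|\ge 2(n-1)$. The only difference is that you spell out the case analysis on $k$ and the final counting step, which the paper leaves implicit.
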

\bpf Note that $\sum_{i=1}^{k}\mu_{i}=\sum_{i=1}^{k}{\theta_{i}}$ since $\tr A(G)=0$.  Since $\mu_1\ge \theta_1$, the  positive eigenvalues majorize the reordered absolute values of the negative eigenvalues.   This implies $s^+(G)\geq s^-(G)$  by Karamata's inequality, and so $s^+(G)\geq n-1$. \epf

We can apply Proposition \ref{p:2pos} to show that $s^+(H_n^3)\ge s^-(H_n^3)$ and thus $s^+(H_n^3)\ge n-1$ for the family of unicyclic graphs $H_n^3$ defined as follows:
For $n\ge k+2$, define  $H^k_n$ to be the graph obtained from $K_{1,n-k}$ and $C_k$ by identifying a degree $1$ vertex of $K_{1,n-k}$ with a vertex of $C_k$; $H^k_n$ has $n$ vertices. The graph in Figure \ref{fig:H9-3} is $H^3_9$.  
The graphs $H^3_n$ appear to minimize $s^-(H_n^3)$ over graphs of order $n$, as discussed in Section \ref{s:conclude}.

\begin{figure}[htbp]
    \centering
    \includegraphics[scale=0.5]{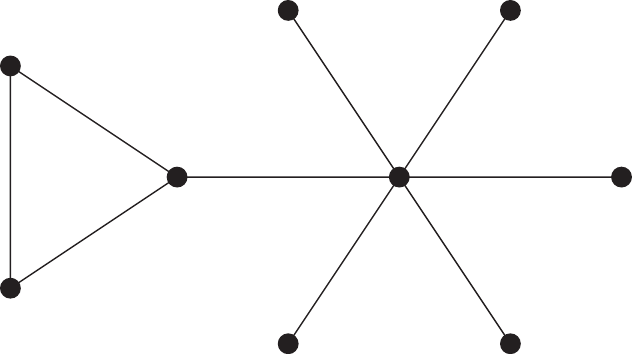}
    \caption{The unicyclic graph $H_9^3$}
    \label{fig:H9-3}
\end{figure}

\begin{proposition}\label{p:Hn3}
The graph $H_n^3$ has exactly two positive eigenvalues and three negative eigenvalues.  Thus  $s^+(H_n^3)\geq s^-(H_n^3)$  and $s^+(H_n^3)\geq n-1$.
\end{proposition}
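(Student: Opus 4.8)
The plan is to pin down the entire spectrum of $H_n^3$ using the twin machinery of Section \ref{sec:eqot-twins}, reduce the sign count to a single $4\times4$ quotient matrix, and then invoke Proposition \ref{p:2pos}. Write $c$ for the center of the star, $v$ for the vertex identified with a vertex of the triangle, and $a,b$ for the other two triangle vertices. First I would observe that the $n-4$ pendant vertices attached to $c$ (all other than $v$) have common neighborhood $\{c\}$ and are therefore independent twins, while $a$ and $b$ share the closed neighborhood $\{v,a,b\}$ and are adjacent twins. I would then use the equitable partition $X=(\{c\},\{v\},\{a,b\},\{\text{the }n-4\text{ pendants}\})$, whose quotient matrix of $A(H_n^3)$ is
\[
B=\mtx{0 & 1 & 0 & n-4\\ 1 & 0 & 2 & 0\\ 0 & 1 & 1 & 0\\ 1 & 0 & 0 & 0}.
\]
By Theorem \ref{t:twin-quot}, $\spec(A(H_n^3))=\{(-1)^{(1)},0^{(n-5)}\}\cup\spec(B)$ as multisets; in particular every eigenvalue of $B$ is real, being an element of the spectrum of the symmetric matrix $A(H_n^3)$.

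Next I would compute the characteristic polynomial of $B$, which a cofactor expansion yields as
\[
p_B(x)=x^4-x^3-(n-1)x^2+(n-3)x+2(n-4).
\]
Since $p_B(0)=2(n-4)\neq0$ for $n\ge5$, the matrix $B$ has no zero eigenvalue. Applying Descartes' rule of signs to $p_B(x)$, whose coefficient signs are $+,-,-,+,+$, gives at most two positive roots, and applying it to $p_B(-x)$, whose signs are $+,+,-,-,+$, gives at most two negative roots. Because the four roots of $p_B$ are all real and nonzero, these bounds must be attained, so $B$ has exactly two positive and exactly two negative eigenvalues.

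Combining this with the twin decomposition gives $\pi_{H_n^3}=2$ (both positive eigenvalues coming from $B$) and $\nu_{H_n^3}=3$ (the two negative roots of $B$ together with the twin eigenvalue $-1$), proving the first assertion. The conclusions $s^+(H_n^3)\ge s^-(H_n^3)$ and $s^+(H_n^3)\ge n-1$ then follow immediately from Proposition \ref{p:2pos}, since $H_n^3$ is connected of order $n\ge5$ with exactly two positive eigenvalues. I expect the only genuine obstacle to be determining the inertia of $B$: Descartes' rule alone supplies only the upper bounds ``at most two positive'' and ``at most two negative,'' and it is the reality of all four roots, guaranteed by the twin decomposition via Theorem \ref{t:twin-quot}, that forces those bounds to be met exactly.
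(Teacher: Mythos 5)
Your proof is correct, but it takes a genuinely different route from the paper's. The paper establishes the inertia by combining a machine-verified base case with interlacing: the spectrum of $H_5^3$ is computed numerically in \cite{sage}, Lemma \ref{lemma:induced subgraph} (induced-subgraph interlacing) then gives $\pi_{H_n^3}\ge 2$ and $\nu_{H_n^3}\ge 3$ for all $n$, and Proposition \ref{t:twin} supplies the eigenvalue $0$ with multiplicity $n-5$ from the independent twins, so the count $2+3+(n-5)=n$ forces both inequalities to be equalities. You instead determine the entire spectrum: Theorem \ref{t:twin-quot} gives $\spec(A(H_n^3))=\{(-1)^{(1)},0^{(n-5)}\}\cup\spec(B)$, and you pin down the inertia of the $4\times 4$ quotient matrix via Descartes' rule of signs, correctly noting that the rule alone gives only upper bounds (at most two positive, at most two negative roots) and that it is the reality of all four roots (they lie in the spectrum of a symmetric matrix) together with $p_B(0)=2(n-4)\neq 0$ that forces these bounds to be attained. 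Your quotient matrix and characteristic polynomial $x^4-x^3-(n-1)x^2+(n-3)x+2(n-4)$ agree, up to reordering the partition classes, with those the paper itself derives in Section \ref{s:conclude} for a different purpose, which corroborates your computation. As for what each approach buys: the paper's argument is shorter and avoids computing a quartic characteristic polynomial by hand, but it rests on a computer check of $H_5^3$ and yields only the inertia; yours is fully self-contained, uniform in $n\ge 5$, and delivers the complete spectrum of $H_n^3$, of which the sign count is a corollary. Both proofs then conclude identically by invoking Proposition \ref{p:2pos}.
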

\bpf The statements about $s^+(H_n^3)$ follow from Proposition \ref{p:2pos} once it is established that $H_n^3$ has exactly two positive eigenvalues.   It is verified computationally in \cite{sage} that $H_5^3$ has exactly two positive eigenvalues and three negative eigenvalues; specifically, $\spec(H_5^3)=\{2.214320, 1, -0.539189, -1, -1.675131\}$  (to six decimal places). Thus $\pi_{H_n^3}\ge 2$ and $\nu_{H_n^3}\ge 3$ by Lemma \ref{lemma:induced subgraph}.   Since $H_n^3$ has a set of $n-4$ independent twins, $0$ is an eigenvalue of $H_n^3$ with multiplicity $n-5$ by Proposition \ref{t:twin}. Thus $\pi_{H_n^3}= 2$ and $\nu_{H_n^3}= 3$.  
\epf
\subsection{Graphs with a certain {fraction} of positive (or negative) eigenvalues}\label{seubsec:fixednumberposevs}
In this section we utilize graph energy to  show that a sufficiently small percentage of the nonzero eigenvalues of a graph $G$ of order $n$ are positive (respectively, negative) then $s^+(G)\ge n-1$ (respectively, $s^-(G)\ge n-1$).

\begin{lemma}\label{l:energy}
Let $G$ be a graph with $\pi_G$  positive eigenvalues and $\nu_G$  negative eigenvalues. Then 
\[
s^+(G) \geq \frac{\cE(G)^2}{4\pi_G} \quad \text{ and } \quad  s^-(G) \geq  \frac{\cE(G)^2}{4\nu_G}. 
\]
\end{lemma}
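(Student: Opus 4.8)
The plan is to prove both inequalities simultaneously by the same argument applied to the positive and negative parts of the spectrum. Recall that the energy is $\cE(G)=\sum_{i=1}^{\pi_G}\lambda_i-\sum_{j=n-\nu_G+1}^n\lambda_j$, and since $\tr A(G)=0$ we have $\sum_{i=1}^{\pi_G}\lambda_i=-\sum_{j=n-\nu_G+1}^n\lambda_j=\tfrac12\cE(G)$. Thus the sum of the positive eigenvalues is exactly $\cE(G)/2$, and likewise the sum of the absolute values of the negative eigenvalues is $\cE(G)/2$.

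For the bound on $s^+(G)$, I would apply the Cauchy--Schwarz inequality to the $\pi_G$ positive eigenvalues. Writing the positive eigenvalues as $\lambda_1,\dots,\lambda_{\pi_G}$, Cauchy--Schwarz gives
\[
\left(\sum_{i=1}^{\pi_G}\lambda_i\right)^2\le \pi_G\sum_{i=1}^{\pi_G}\lambda_i^2=\pi_G\, s^+(G).
\]
Substituting $\sum_{i=1}^{\pi_G}\lambda_i=\cE(G)/2$ yields $\cE(G)^2/4\le \pi_G\, s^+(G)$, which rearranges to the claimed $s^+(G)\ge \cE(G)^2/(4\pi_G)$. The argument for $s^-(G)$ is identical, applied to the $\nu_G$ negative eigenvalues (equivalently, their absolute values), whose sum is also $\cE(G)/2$; Cauchy--Schwarz then gives $\cE(G)^2/4\le \nu_G\, s^-(G)$, i.e.\ $s^-(G)\ge \cE(G)^2/(4\nu_G)$.

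This proof is essentially a one-line application of Cauchy--Schwarz, so there is no serious obstacle. The only point requiring a moment's care is the identity $\sum_{i=1}^{\pi_G}\lambda_i=\cE(G)/2$, which hinges on the zero-trace property of the adjacency matrix: because the eigenvalues sum to zero, the positive ones and the (absolute values of the) negative ones each account for exactly half of $\cE(G)$. Once that identity is in hand, both inequalities follow immediately, and the factor of $4$ in the denominator is exactly the square of the factor of $2$ arising from splitting the energy into its two equal halves.
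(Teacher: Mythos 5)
Your proof is correct and follows exactly the same route as the paper: both use the zero-trace identity to show the positive (and the absolute values of the negative) eigenvalues each sum to $\cE(G)/2$, and then apply the Cauchy--Schwarz inequality to the relevant block of eigenvalues. No gaps; this matches the paper's argument essentially line for line.
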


\begin{proof}
Let $\lambda_1, \ldots, \lambda_{\pi_G}$ be the positive eigenvalues of $G$ and let $\lambda_{n - \nu_G + 1},\ldots, \lambda_{n}$ be the negative eigenvalues of $G$. Since the eigenvalues of $G$ sum to $0$, the energy of $G$ is as follows
\[
\cE(G) = \sum_{i =1}^{\pi_G} \lambda_i - \sum_{j=n - \nu_G + 1}^{n} \lambda_i = 2 \sum_{i =1}^{\pi_G} \lambda_i = - 2\sum_{j=n-\nu_G+1}^{n} \lambda_i. 
\]
By applying the Cauchy-Schwarz  inequality to the vector of positive eigenvalues and the all ones vector, we obtain
\[
\frac{\left(\frac{\cE(G)}{2}\right)^2}{\pi_G} = \frac{\left(\sum_{i =1}^{\pi_G} \lambda_i  \right)^2}{\pi_G} \leq \sum_{i =1}^{\pi_G} \lambda_i^2 = s^+(G).
\]
Similarly, we have 
\[
\frac{\left(\frac{\cE(G)}{2}\right)^2}{\nu_G} = \frac{\left(\sum_{i = n-\nu_G+1}^{n} \lambda_i  \right)^2}{\nu_G} \leq \sum_{i =n-\nu_G+1}^{n} \lambda_i^2 = s^-(G). 
\]
\end{proof}

 \begin{theorem}
\label{thm:atmostn/4posev}
Let  $G$ be a connected graph with $n\ge 3$ vertices.
If $\pi_G\leq \frac{(\rank A(G))^2}{4(n-1)}$, 
then $s^+(G)\geq n-1$. If $\nu_G \leq \frac{(\rank A(G))^2}{4(n-1)}$, then $s^-(G) \geq n-1$. 
\end{theorem}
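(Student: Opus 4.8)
The plan is to combine the energy-based lower bound in Lemma~\ref{l:energy} with a lower bound on the graph energy $\cE(G)$ in terms of the rank of $A(G)$. The key observation is that the rank of $A(G)$ equals $\pi_G + \nu_G$, the total number of nonzero eigenvalues, and that the energy is the sum of the absolute values of these nonzero eigenvalues. So I would seek an inequality of the shape $\cE(G) \ge \rank A(G)$, or more precisely a bound that, once fed into Lemma~\ref{l:energy}, yields $s^+(G) \ge \frac{(\rank A(G))^2}{4\pi_G}$, matching the hypothesis.

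\textbf{Step 1.} First I would recall that $\rank A(G)$ is exactly the number of nonzero eigenvalues of $A(G)$, that is $\rank A(G) = \pi_G + \nu_G$. Step 2 is to establish a lower bound on the energy of the form $\cE(G) \ge \rank A(G)$. The cleanest route uses the Cauchy--Schwarz inequality on the nonzero eigenvalues against their signs: since $\cE(G) = \sum |\lambda_i|$ (sum over the $\rank A(G)$ nonzero eigenvalues), and since each nonzero eigenvalue of an integer adjacency matrix satisfies $|\lambda_i| \ge \ldots$ — but eigenvalues need not be bounded below by $1$ in general. A more robust approach is the classical inequality relating energy and rank: by the AM--QM or Cauchy--Schwarz step in reverse one has $\cE(G)^2 \le \rank A(G) \cdot 2|E(G)|$, which is the wrong direction. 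Instead I would use that $\cE(G) \ge \sqrt{2|E(G)|}$ together with $2|E(G)| \ge n-1$ for a connected graph, or better, apply the McClelland-type bound or the known fact that $\cE(G) \ge \rank A(G)$ whenever the nonzero eigenvalues have absolute value at least $1$; for adjacency matrices this last fact holds because the product of the nonzero eigenvalues (up to sign) is an integer, forcing $\prod |\lambda_i| \ge 1$ and hence by AM--GM $\cE(G)/\rank A(G) \ge 1$.

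\textbf{Step 3.} Granting $\cE(G) \ge \rank A(G)$, I would plug into Lemma~\ref{l:energy} to get
\[
s^+(G) \ge \frac{\cE(G)^2}{4\pi_G} \ge \frac{(\rank A(G))^2}{4\pi_G}.
\]
Then the hypothesis $\pi_G \le \frac{(\rank A(G))^2}{4(n-1)}$ rearranges to $\frac{(\rank A(G))^2}{4\pi_G} \ge n-1$, giving $s^+(G) \ge n-1$ directly. The argument for $s^-(G)$ is entirely symmetric, using the second inequality of Lemma~\ref{l:energy} with $\nu_G$ in place of $\pi_G$.

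\textbf{The main obstacle} will be justifying the energy lower bound $\cE(G) \ge \rank A(G)$ rigorously, since the naive claim that every nonzero eigenvalue has absolute value at least $1$ is false in general. The salvageable fact is that the nonzero eigenvalues come from an integer symmetric matrix, so the product of all nonzero eigenvalues is a nonzero integer whose absolute value is at least $1$; combining $\prod|\lambda_i| \ge 1$ (over the $\rank A(G)$ nonzero eigenvalues) with the AM--GM inequality $\frac{1}{\rank A(G)}\sum |\lambda_i| \ge \bigl(\prod |\lambda_i|\bigr)^{1/\rank A(G)} \ge 1$ yields exactly $\cE(G) \ge \rank A(G)$. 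I expect the bulk of the care to go into this product-of-eigenvalues argument (via the nonzero part of the characteristic polynomial, or equivalently a nonvanishing principal minor of size $\rank A(G)$), after which the remaining algebra is a one-line rearrangement.
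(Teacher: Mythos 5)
Your proposal is correct and follows essentially the same route as the paper: the paper's proof also combines Lemma~\ref{l:energy} with the observation that the product of the nonzero eigenvalues is (up to sign) an integer coefficient of the characteristic polynomial, hence has absolute value at least $1$, and then applies AM--GM to get $\cE(G)\ge \rank A(G)$ before rearranging the hypothesis. The obstacle you flag (that individual nonzero eigenvalues need not have modulus at least $1$) is resolved in exactly the way the paper does it, so there is no gap.
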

\begin{proof} Let $r=\pi_G+\nu_G=\rank A(G)$.
 Let $a_r$ denote the product of the nonzero eigenvalues. Since $a_r=S_r(\lam_1,\dots,\lam_n)$ is the $r$th symmetric function of all the eigenvalues, $(-1)^ra_r$ is the coefficient  of $x{n-r}$ in the characteristic polynomial $p(x)$ of $A(G)$. Since all the entries of $A(G)$ are integers, every coefficient in $p(x)$ is an integer.  Since $a_r\ne 0$, this implies that $|a_r|\geq 1$.
By Lemma \ref {l:energy} and the arithmetic mean-geometric mean inequality,
\[
    {s^{ -}(G)}\geq \frac{ \cE(G)^{2}}{ 4\nu_G}=
    \frac{\lp |\lam_1|+\dots+|\lam_{\pi_G}|+|\lam_{n-\nu_G+1}|+\dots+|\lam_{n}|\rp^2}{ 4\nu_G}\ge
    \frac{\lb r\lp\prod_{\lam_i\ne 0}|\lambda_i|\rp^{\frac{1}{r}}\rb^2}{ 4\nu_G}\geq \frac{r^2}{4\nu_G}.\]
 If $\nu_G \leq \frac{r^2}{4(n-1)}$, then $s^-(G) \geq n-1$.
    
 An analogous argument shows the same statement for $s^+(G)$.
\end{proof}

\subsection{Cactus graphs}\label{s:cacti}
A \emph{cactus graph} is a connected graph in which any two  cycles have at most one vertex in common.  For a cactus graph $G$ that has sufficiently many even cycles relative to the number of odd cycles and its maximum degree,
we can delete vertices to obtain an induced bipartite graph and apply results from Section \ref{sec:interlacing} to conclude $G$ satisfies Conjecture \ref{conj:energies}.  While the  strategy of deleting vertices to obtain a bipartite graph (see Lemma \ref{l:del2bip} below) can be applied to any graph, it is particularly easy to use on cactus graphs.
The \emph{maximum degree} of a graph $G$ is $\Delta(G)=\max\{\deg v:v\in V(G)\}$.

\begin{lemma}\label{l:del2bip}   Let $G$ be a graph on $n$ vertices and let $S\subset V(G)$ be 
such that the subgraph of $G$ obtained by deleting the vertices in $S$ is bipartite. Then  
\[
s^+(G) \geq |E(G)| - |S|\,  \Delta(G)\text{ and }
s^-(G)  \geq|E(G)| - |S|\, \Delta(G).
\]
\end{lemma}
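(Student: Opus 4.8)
The plan is to combine the induced-subgraph interlacing result (Lemma \ref{lemma:induced subgraph}) with a short edge count. Write $H$ for the induced subgraph of $G$ obtained by deleting the vertices of $S$; by hypothesis $H$ is bipartite. Recall the fact noted just before Corollary \ref{cor:induced subgraph}, that a bipartite graph with $\ell$ edges satisfies $s^+(H)=s^-(H)=\ell=|E(H)|$. First I would invoke the interlacing bound of Lemma \ref{lemma:induced subgraph} (whose underlying Cauchy interlacing argument applies to any induced subgraph) to obtain $s^+(G)\ge s^+(H)=|E(H)|$ and $s^-(G)\ge s^-(H)=|E(H)|$, so that both square energies of $G$ are bounded below by the number of edges of the bipartite subgraph $H$.

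It then remains to bound $|E(H)|$ from below in terms of $|E(G)|$, $|S|$, and $\Delta(G)$. The edges of $G$ that are destroyed upon deleting $S$ are exactly those incident to at least one vertex of $S$. Each such edge contributes to the $G$-degree of some vertex of $S$, so the number of deleted edges is at most $\sum_{v\in S}\deg_G(v)\le |S|\,\Delta(G)$. Hence $|E(G)|-|E(H)|\le |S|\,\Delta(G)$, that is, $|E(H)|\ge |E(G)|-|S|\,\Delta(G)$, and substituting this into the two inequalities from the previous paragraph yields both claimed bounds.

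The only point requiring any care is the edge count: the sum $\sum_{v\in S}\deg_G(v)$ \emph{overcounts} edges with both endpoints in $S$, counting them twice, but since we need only an upper bound on the number of deleted edges this overcounting is harmless and in fact makes the inequality $|E(G)|-|E(H)|\le |S|\,\Delta(G)$ slightly easier. I do not anticipate a genuine obstacle here; the result is a direct consequence of interlacing together with the exact value of the square energies of a bipartite graph. The substantive observation is simply that deleting few vertices of bounded degree removes few edges, so the bipartite lower bound $|E(H)|$ remains close to $|E(G)|$, which is precisely what makes the lemma useful for cactus graphs in Section \ref{s:cacti}.
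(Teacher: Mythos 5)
Your proof is correct and takes essentially the same approach as the paper: both arguments delete $S$ to obtain the bipartite induced subgraph $H$, bound the number of lost edges by $|S|\,\Delta(G)$ (each deleted vertex kills at most $\Delta(G)$ edges), and then conclude via interlacing together with $s^+(H)=s^-(H)=|E(H)|$. The only cosmetic difference is that the paper cites Corollary \ref{cor:induced subgraph} for the last step while you invoke Lemma \ref{lemma:induced subgraph} and the bipartite fact directly, which is the same content (and arguably the cleaner citation, since the corollary is phrased with the specific threshold $n-1$).
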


 \begin{proof} Let $H=G-S$, the graph induced by $V(G)\setminus S$.  Observe that deleing a single vertex can delete at most $\Delta(G)$ edges, so $E(H)\ge |E(G)|-|S|\Delta(G)$. Thus  by Corollary \ref{cor:induced subgraph},
 $s^+(G) \geq |E(G)| - |S|\,  \Delta(G)$ and $s^-(G)  \geq|E(G)| - |S|\, \Delta(G)$. 
 \end{proof} 

The next corollary applies Lemma \ref{l:del2bip} to cactus graphs.

\begin{corollary}\label{c:sminus}
 Let $G$ be a cactus graph on $n$ vertices with $k$ odd cycles and $\ell$ even cycles. If
$\ell \geq k(\Delta{(G)} -1)$, then $G$ satisfies Conjecture \ref{conj:energies}
\end{corollary}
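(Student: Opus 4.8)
The plan is to invoke Lemma \ref{l:del2bip}, so the task reduces to choosing a small vertex set $S$ whose removal leaves a bipartite graph, and then pinning down $|E(G)|$ and $|S|$ precisely enough that the resulting bound beats $n-1$. Since a graph is bipartite exactly when it contains no odd cycle, and the cactus structure guarantees that the only cycles of $G$ are the $k+\ell$ prescribed ones (any two of which meet in at most one vertex), I would destroy every odd cycle by deleting a single vertex from each odd cycle. This produces a set $S$ with $|S|\le k$ (strictly smaller if two odd cycles happen to share a vertex), and $G-S$ then contains no odd cycle, hence is bipartite.

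Next I would compute $|E(G)|$. The circuit rank $|E(G)|-n+1$ of a connected graph equals the dimension of its cycle space; because in a cactus the cycles are edge-disjoint and pairwise share at most one vertex, they form an independent spanning set of the cycle space, and there are exactly $k+\ell$ of them. Hence $|E(G)| = n-1+k+\ell$.

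Combining these facts with Lemma \ref{l:del2bip} and the bound $|S|\le k$ gives
\[
s^+(G),\, s^-(G) \;\ge\; |E(G)| - |S|\,\Delta(G) \;\ge\; (n-1+k+\ell) - k\,\Delta(G) \;=\; n-1 + \ell - k(\Delta(G)-1).
\]
Under the hypothesis $\ell \ge k(\Delta(G)-1)$ the trailing terms are nonnegative, so both $s^+(G)$ and $s^-(G)$ are at least $n-1$, establishing Conjecture \ref{conj:energies} for $G$.

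Each step is short, and the point that needs the most care is the claim that deleting one vertex per odd cycle actually yields a bipartite graph with $|S|\le k$. This rests squarely on the defining property of a cactus, namely that any two cycles intersect in at most one vertex: this ensures simultaneously that $G$ has no odd cycles beyond the $k$ listed ones and that the per-cycle deletions can be carried out independently without overcounting. I would double-check the edge count through the circuit-rank identity rather than by direct tallying, precisely to avoid miscounting vertices shared between cycles. The bound supplied by Lemma \ref{l:del2bip} is exactly strong enough here because each deleted vertex removes at most $\Delta(G)$ edges, which is the quantity the hypothesis $\ell \ge k(\Delta(G)-1)$ is calibrated to absorb.
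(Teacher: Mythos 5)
Your proposal is correct and follows essentially the same route as the paper: both delete one vertex from each odd cycle, observe $|E(G)| = n-1+k+\ell$, and apply Lemma \ref{l:del2bip} to conclude $s^+(G), s^-(G) \ge n-1+\ell-k(\Delta(G)-1) \ge n-1$. Your circuit-rank justification of the edge count and the remark that $|S|$ may be strictly smaller than $k$ are just more detailed versions of steps the paper treats as immediate.
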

\bpf It is easy to see that $|E(G)| = n-1 + k + \ell$. The deletion of $k$ vertices, one from each odd cycle, will result in a bipartite graph.  So if $\ell \geq k(\Delta{(G)} -1)$, then $|E(G)|=n-1+k+\ell\ge n-1+k\Delta(G)$ and the result follows from Lemma \ref{l:del2bip}.
\epf

However,  one can often break multiple odd cycles by deleting a single vertex, in which case applying Lemma \ref{l:del2bip} or Corollary \ref{cor:induced subgraph} directly is  preferred, as in the next example.

\begin{example}{\rm  Let $G$ be 
the cactus graph shown in Figure \ref{f:goodcactus}.  Deleting  the one vertex incident to the two 3-cycles results in a bipartite graph with $13=|V(G)|-1$ edges, so $G$ satisfies Conjecture  \ref{conj:energies} by Corollary \ref{cor:induced subgraph}.
 }\end{example}

\begin{figure}[!h]    \centering    \includegraphics[scale=0.45]{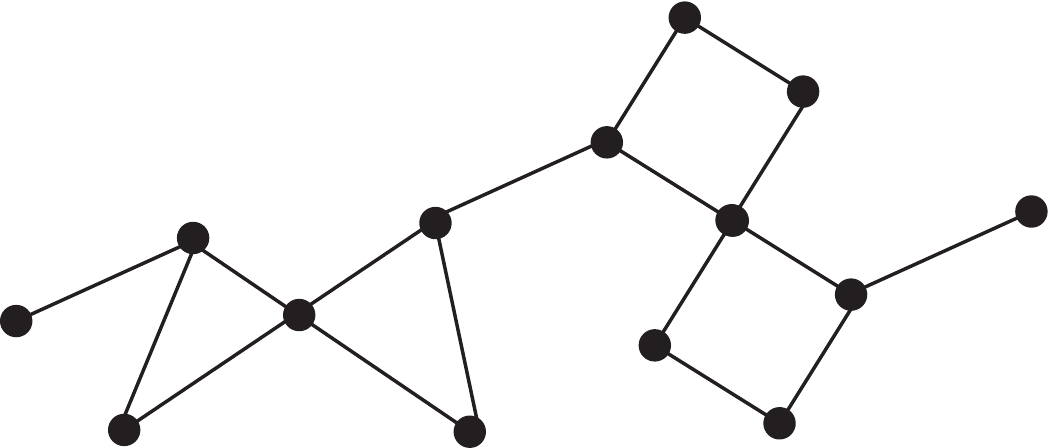}    \caption{A cactus to which Lemma \ref{l:del2bip} applies.\label{f:goodcactus}}  \end{figure}

\section{Concluding remarks and open problems}\label{s:conclude}
In this section, we discuss  some  questions whose solutions  may shed light on the conjecture  and   present related computational data.  In particular, we look at the relative magnitude of $s^+(G)$ and $s^-(G)$ and at some graph families that seem difficult for the conjecture.

From computations on small graphs, as summarized in Table \ref{tab:splus-vs-sminus}, we see that it is much more common that $s^+(G)$  is larger. 

\begin{table}[htbp]
    \centering
    \begin{tabular}{c|c|c|c|c|c|c}
n & \small \# graphs & \small \#  $s^+ > s^-$ & \small \#  $s^- > s^+$ & \small \% $s^- > s^+$ & \small \# bipartite & \small \#  $s^+ = s^-$\\
\hline
2 & 1 & 0 & 0 & 0.000000 & 1 & 1\\
3 & 2 & 1 & 0 & 0.000000 & 1 & 1\\
4 & 6 & 3 & 0 & 0.000000 & 3 & 3\\
5 & 21 & 15 & 1 & 4.76190 & 5 & 5\\
6 & 112 & 93 & 2 & 1.78571 & 17 & 17\\
7 & 853 & 795 & 14 & 1.64127 & 44 & 44\\
8 & 11117 & 10848 & 87 & 0.782585 & 182 & 182    \end{tabular}
    \caption{The number of graphs where $s^+(G) > s^-(G)$, $s^+(G) = s^-(G)$, and $s^-(G) > s^+(G)$  for connected graphs on up to $8$ vertices.\label{tab:splus-vs-sminus}}\vspace{-10pt}
\end{table}
}

\begin{question}
    Does the percentage of graphs $G$ with $s^-(G)>s^+(G)$ tend to zero as $n$ goes to infinity?
\end{question}

We also note that for $n \leq 8$, there were no non-bipartite graphs on $n$ vertices that had $s^+(G) = s^-(G)$. 
\begin{question}
    Do there exist non-biparite graphs for which $s^+(G) = s^-(G)$?
\end{question}

The class of non-bipartite unicyclic graphs seems to be a particularly difficult case for Conjecture \ref{conj:energies}.  In particular, the conjecture is still open for unicyclic graphs $G$ of order $n\ge 10$  such that $G$ has a  $3$-cycle and is not isomorphic to $U_{n,3}$.  It is not surprising that unicyclic graphs challenge the conjecture, since they are close to graphs that achieve equality in the bound: A connected unicyclic graph of order $n$ has  $n$ edges and a connected graph $T$ of order $n$ with $n-1$ edges is a tree and has $s^+(T)=s^-(T)=n-1$ (however, the graph with the maximum number of edges, $K_n$, also has $s^-(K_n)=n-1$).

We performed computations  
 on non-bipartite, connected unicyclic graphs (equivalently, connected graphs on $n$ vertices with $n$ edges) for $n =3,\ldots, 18$ \cite{sage}. We summarize the minimum value of $s^+(G)$ and $s^-(G)$ among these graphs, as well as the number of isomorphism classes of such graphs,  in Table \ref{tab:unicyclic}.
  Recall that $H^k_n$ is the graph of order $n$ obtained from $K_{1,n-k}$ and $C_k$ by identifying a degree $1$ vertex of $K_{1,n-k}$ with a vertex of $C_k$.  It was shown in Proposition \ref{p:Hn3} that $s^+(H^3_n)\ge n-1$.
By Theorem \ref{lem:unicylic-big-m},  Conjecture \ref{conj:energies} is true for $H^5_n$ for $n\le 48$.  For each case $n=3,\dots,18$ and each of $s^+(G)$ and $s ^-(G)$, the {minimum  value is attained by only one isomorphism class of graphs \cite{sage} (since $s^+(G)+s^-(G)=2|E(G)|=2n$ for a unicyclic graph, a minimizer for $s^+(G)$ is a maximizer for $s^-(G)$ and vice versa).}   In particular, the minimizer of $s^+(G)$  among non-bipartite unicyclic graphs of order $n =  7,\ldots, 18$ is $H_n^5$ and the minimizer of $s^-(G)$  among non-bipartite unicyclic graphs of order $n =  5, \ldots, 18$ is $H_n^3$.  This leads us to ask whether this is true in general.

 \begin{table}[htbp]
\small
    \centering
    \begin{tabular}{c|c|c|c|c|c|c|c|c|c}
 $n$ & 3 & 4 & 5 & 6 & 7 & 8 & 9 & 10 & 11 \\
 \hline
total graphs & 1 & 1 & 4 & 8 & 23 & 55 & 155 & 403 & 1116  \\
 $\min s^+(G)$ & 4.0 & 4.806063 & 4.763932 & 5.8548 & 6.797054 & 7.786641 & 8.78153 & 9.778404 & 10.776269\\
$\min s^-(G)$  & 2.0 & 3.193937 & 4.096788 & 5.073208 & 6.060343 & 7.051905 & 8.045829 & 9.041196 & 10.037521 \\
\end{tabular}
    
    \vspace{10pt}
    \begin{tabular}{c|c|c|c|c|c|c|c}
$n$  & 12 & 13 & 14 & 15 & 16 & 17 & 18  \\
\hline
total graphs & 3029 & 8417 & 23285 & 65137 & 182211 & 512625 & 1444444 \\
$\min  s^+(G)$  & 11.774708 & 12.773512 & 13.772564 & 14.771792 & 15.771151 & 16.77061 & 17.770146  \\
$\min  s^-(G)$  & 11.034519 & 12.032012 & 13.029882 & 14.028045 & 15.026442 & 16.025029 & 17.023774  \\
    \end{tabular}
    \caption{The minimum values of $s^+(G)$ and $s^-(G)$, rounded to $6$ decimal places, among non-bipartite unicyclic graphs of order $n$.}
    \label{tab:unicyclic}
\end{table}

\begin{question}\label{conj:unicyclic}
    Let $G$ be a non-bipartite unicyclic graph on $n \geq 19$ vertices.  Is  $s^+(G) \geq s^+(H^5_n)$?  Is $s^-(G) \geq s^-(H_{n}^3)$?
\end{question}

Next we describe some preliminary efforts to show that $s^-(H^3_n)\ge n-1$.  By Proposition \ref{t:twin}, $A(H^3_n)$ has eigenvalues $-1$  and $0$ with multiplicity $n-5$. From Proposition \ref{p:Hn3} we know that $\pi_{H^3_n}=2$ and $\nu_{H^3_n}=3$.  

In $H^3_n$, label the vertices as follows: The degree-2 vertices are 1 and 2, the degree-3 vertex is 3, the degree-($n-3$) vertex adjacent to one or more leaves is 4, and the leaves are $5,\dots,n$. The partition $X_1=\{1,2\}$, $X_2=\{3\}$,  $X_3=\{4\}$, $X_4=\{5,\dots,n\}$ is equitable.  The quotient matrix is 
\[B=\mtx{1 & 1 & 0 & 0\\
2 &0 & 1 & 0\\
0 & 1 & 0 & n-4\\
0 & 0 & 1 & 0}.\]
 The characteristic polynomial of $B$ is $p_B(x)=x^4 - x^3 - (n - 1) x^2 + (n - 3) x + 2 (n - 4)$.  Denote the eigenvalues of $B$ by $\mu_1>\mu_2>\mu_3>\mu_4$.   Thus
$\spec(A(H^3_n))=\{\mu_1,\mu_2,\mu_3,\mu_4,-1,0^{(n-5)}\}$. Since $A(H^3_n)$ has exactly two positive eigevalues, $\mu_2>0>\mu_3$.     
To prove the conjecture for $H^3_n$, it is suffices to show that $\mu_3^2+\mu_4^2\ge n-2$. Similar methods can be applied to $H^5_n$ (a $5\x 5$ quotient matrix can be obtained by using an equitable partition that groups the two cycle neighbors of the degree-3 cycle vertex together and groups the other two degree-2 cycle vertices together).
 
 Another interesting approach is the behavior of $s^+(G)$ and $s^-(G)$ when a leaf is added at a vertex of $G$. Let $v$ be a vertex of $G$ and let $G^v$ be obtained by adding a leaf adjacent to $v$. We can then look at the quantities
\[
s^+(G^v) - s^+(G), s^-(G^v) - s^-(G).
\]
Since $G^v$ has exactly one more vertex and one more edge than $G$, one might hope that the increment to $s^+(G)$ and $s^-(G)$ is at least one, but that is not the case, see Figure \ref{fig:unicyclic-pendant}. 

\begin{figure}[htbp]
    \centering
    \includegraphics[scale=0.7]{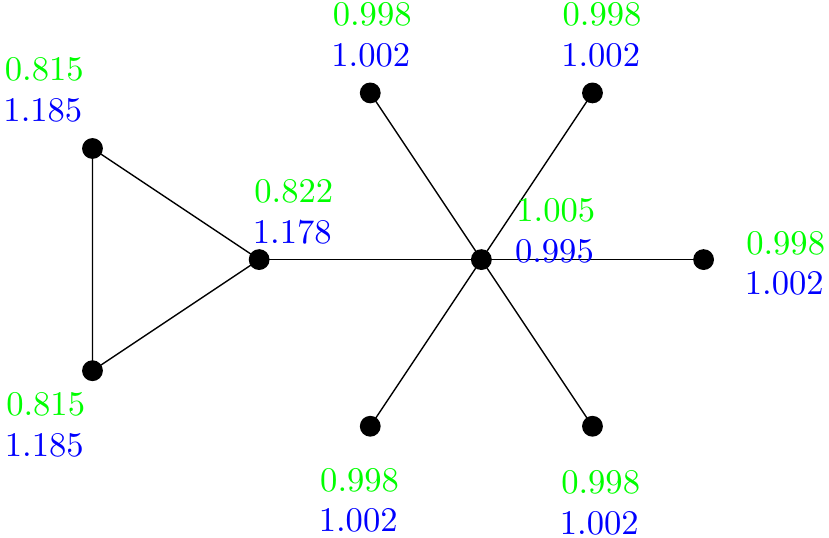}
    \caption{A unicyclic graph on $9$ vertices; each vertex $v$ is labelled by the increase to $s^+$ (in green) and to $s^-$ (in blue), resulting from adding a vertex adjacent to only $v$. }
    \label{fig:unicyclic-pendant}
\end{figure}


\bigskip
{\bf Acknowledgment.} This project started and was made possible by \textit{Spectral Graph and Hypergraph Theory: Connections \& Applications}, December 6-10, 2021, a workshop at the American Institute of Mathematics with  support from the US National Science Foundation. The authors thank AIM and also thank Sam Spiro for many fruitful discussions. 
Aida Abiad thanks Clive Elphick for bringing Conjecture \ref{conj:energies} to her attention. 
Aida Abiad is partially supported by the Dutch Research Council through the grant VI.Vidi.213.085 and by the Research Foundation Flanders through the grant 1285921N. Leonardo de Lima is partially supported by CNPq grant 315739/2021-5.

 
\end{document}